\numberwithin{figure}{section}
\def\R{{\mathbb R}}
\def\C{{\mathbb C}}
\def\T{{\mathbb T}}
\def\Z{{\mathbb Z}}
\def\N{{\mathbb N}}
\def\h{\mathfrak h}
\def\la{\langle}
\def\ra{\rangle}
\def\s{\vskip 0.25cm\noindent}
\def\e{\varepsilon}
\def\build#1_#2^#3{\mathrel{
\mathop{\kern 0pt#1}\limits_{#2}^{#3}}}
\def\td_#1,#2{\mathrel{\mathop{\build\longrightarrow_{#1\rightarrow #2}^{}}}}
\DeclareFontFamily{U}{MnSymbolC}{}
\DeclareSymbolFont{MnSyC}{U}{MnSymbolC}{m}{n}
\DeclareFontShape{U}{MnSymbolC}{m}{n}{
    <-6>  MnSymbolC5
   <6-7>  MnSymbolC6
   <7-8>  MnSymbolC7
   <8-9>  MnSymbolC8
   <9-10> MnSymbolC9
  <10-12> MnSymbolC10
  <12->   MnSymbolC12}{}
\DeclareMathSymbol{\intprod}{\mathbin}{MnSyC}{'270}
\newtheorem{theorem}{Theorem}
\newtheorem{corollary}{Corollary}
\newtheorem{proposition}{Proposition}
\newtheorem{lemma}{Lemma}
\newtheorem{remark}{Remark}
\begin{document}
\title[Smoothing properties and Tao's gauge transform]{On smoothing properties and Tao's gauge transform of 
the Benjamin-Ono equation on the torus}
\author[P. G\'erard]{Patrick G\'erard}
\address{Laboratoire de Math\'ematiques d'Orsay, CNRS, Universit\'e Paris--Saclay, 91405 Orsay, France} \email{{\tt patrick.gerard@universite-paris-saclay.fr}}
\author[T. Kappeler]{Thomas Kappeler}
\address{Institut f\"ur Mathematik, Universit\"at Z\"urich, Winterthurerstrasse 190, 8057 Zurich, Switzerland} 
\email{{\tt thomas.kappeler@math.uzh.ch}}
\author[P. Topalov]{Petar Topalov}
\address{Department of Mathematics, Northeastern University,
567 LA (Lake Hall), Boston, MA 0215, USA}
\email{{\tt p.topalov@northeastern.edu}}

\subjclass[2010]{ 37K15 primary, 47B35 secondary}


\begin{abstract}
We prove smoothing properties of the solutions of the Benjamin-Ono equation
in the Sobolev space $H^{s}(\T, \R)$ for any $s \ge 0$. 
To this end we show
that Tao's gauge transform is a high frequency approximation 
of the nonlinear Fourier transform $\Phi$ for the Benjamin-Ono equation,
constructed in our previous work. 
The results of this paper are manifestations of the quasi-linear character of the Benjamin-Ono equation.
\end{abstract}

\keywords{Benjamin--Ono equation, well-posedness,
one smoothing property, Tao's gauge transform}

\thanks{
T.K.  partially supported by the Swiss National Science Foundation.
P.T.  partially supported by the Simons Foundation, Award \#526907.}

\maketitle

\tableofcontents

\medskip

\section{Introduction}\label{introduction}
In this paper we consider the Benjamin-Ono (BO) equation on the torus,
\begin{equation}\label{BO}
\partial_t v = \textup{H}[\partial^2_x v] - \partial_x v^2\,, \qquad x \in \T:= \R/2\pi\Z\,, \  t \in \R,
\end{equation}
where $v\equiv v(t, x)$ is real valued and $\textup{H}$ denotes the Hilbert transform, defined for $f = \sum_{n \in \mathbb Z} \widehat f(n) e^{ i n x} $,
$\widehat f(n) = \frac{1}{2\pi}\int_0^{2\pi} f(x) e^{-  i n x} dx$,  by
$$
\textup{H}[ f](x) := \sum_{n \in \Z} -i \ \text{sign}(n) \widehat f(n) \ e^{inx}
$$
with $\text{sign}(\pm n):= \pm 1$ for any $n \ge 1$, whereas $\text{sign}(0) := 0$. 

Equation \eqref{BO} 
has been introduced by Benjamin \cite{Benj} and Davis\&Acrivos \cite{DA} to model long, uni-directional internal waves 
in a two-layer fluid. It has been extensively studied, both on the real line $\R$ and on the torus $\T$. 
Let us briefly summarize some of the by now classical results 
on the well-posedness problem of \eqref{BO}, relevant for this paper -- we refer to \cite{Sa} for an excellent survey as well as a derivation of  \eqref{BO}.
Based on work of Saut \cite{Sa0},  Abdelouhab\&Bona\&Felland\&Saut proved in \cite{ABFS} that
for  $s > 3/2$, equation \eqref{BO} is globally in time well-posed on the Sobolev space $H^s_r \equiv H^s(\T, \R)$
(endowed with the standard norm $\| \cdot \|_s$, defined by \eqref{Hs norm} below), meaning the following:
\begin{itemize}
\item[(S1)]
{\em Existence and uniqueness of classical solutions:} For any initial data $v_0 \in H^{s}_r$, there exists 
a unique curve $v : \R \to H^s_r$ in
$C(\R, H^s_r) \cap C^1(\R, H^{s-2}_r)$ so that
$v(0) = v_0$ and for any $t \in \R$, equation \eqref{BO} is satisfied in $H^{s-2}_r$.   
(Since $H^s_r$ is an algebra, one has 
$\partial_x(v(t)^2 )\in H^{s-1}_r$ for any time $t \in \R$.)
\item[(S2)]{\em Continuity of solution map:}
The solution map 
$\mathcal S : H^s_r \to C(\mathbb R, H^s_r)$
is continuous, meaning that for any $v_0 \in H^s_r,$
$T > 0$, and $\varepsilon > 0$ there exists $\delta > 0,$ 
so that for any $w_0 \in H^s_r$ with $\|w_0 - v_0 \|_s < \delta$,
the solutions $w(t) = \mathcal S(t, w_0)$
 and 
$v(t)= \mathcal S(t, v_0)$ of \eqref{BO}
with initial data $w(0) = w_0$ and, respectively, $v(0) = v_0$
satisfy $\sup_{|t| \le T} \| w(t) - v(t) \|_s \le  \varepsilon$.
\end{itemize}
In the sequel, further progress has been made 
on the well-posedness of \eqref{BO} 
on Sobolev spaces of low regularity. 
The best results so far in this direction
were obtained by Molinet, using as a key ingredient the gauge transform, introduced by Tao \cite{Tao}
for the Benjamin-Ono equation on $\R$.
Molinet's results in \cite{Mol} (cf. also \cite{MP}) imply that the solution 
map $\mathcal S,$ introduced in $(S2)$ above, 
continuously extends to any
Sobolev space $H^s_r$ with $0 \le s \le 3/2$. 
More precisely, for any such $s$,
$\mathcal S: H^s_r \to C(\R, H^s_r)$ is continuous
and for any $v_0 \in H^s_r$, $ \mathcal S(t, v_0)$ satisfies equation \eqref{BO}  
in $H^{s-2}_r$. 
Finally, in the recent paper \cite{GKT1} we proved that \eqref{BO} is wellposed in the Sobolev space $H^s_r$ 
for any $s > -1/2$, but illposed for $s \le -1/2$.

In a straightforward way one verifies that for any solution $v(t) \equiv \mathcal S(t, v_0)$ of \eqref{BO} in $H^s_r$
with $s > -1/2$,  the mean $ \langle v(t) | 1 \rangle$
is conserved. Here $\langle \cdot  \, |  \, \cdot \rangle $ denotes 
the extension of the $L^2-$inner product, 
\begin{equation}\label{L2 inner product}
\langle f | g \rangle =  \frac{1}{2\pi} \int_0^{2\pi} f \overline g dx \, , \qquad \forall \, f, g \in L^2_c \equiv L^2(\T, \C)
\end{equation}
to the dual pairing $H^s_r \times H^{-s}_r \to \C$.
As a consequence,  for any $s > -1/2$, the subspace
\begin{equation}\label{affine spaces}
H^s_{r,0} :=  \{ v \in H^s_r \, : \, \langle v | 1 \rangle =0 \} \, , 
\end{equation}
of $H^s_r$ is invariant by the flow of \eqref{BO}. (For $s=0$, we usually write  $L^2_{r,0}$ for $H^0_{r,0}$.)
Since for any $a \in \R$ and any solution $v(t) = \mathcal S(t, v_0)$ of \eqref{BO} in $H^s_r$ with $s > -1/2$,
$v_a (t,x):=a+v(t, x - 2at)$ is again a solution of \eqref{BO} in $H^s_r$, for our purposes, 
it suffices to consider solutions in $H^s_{r,0}$.

The main goal of this paper is to prove smoothing properties of solutions of \eqref{BO}.  
A first key ingredient in their proof is Tao's gauge transform, which we denote by  $\mathcal G$.
To define it, 
we first need to introduce some more notation. For any $f \in H^s_{c} \equiv H^s(\T, \C)$, $s \in \R$,
the Szeg\H{o} projection $\Pi f$ of $f = \sum_{n \in \Z} \widehat f(n) e^{inx}$ is defined as $\sum_{n \ge 0} \widehat f(n) e^{inx}$.
Clearly, $\Pi$ defines a bounded linear operator $H^s_c \to H^s_+$ where
$$
H^s_+ :=  \{ f \in H^s_c \ : \ \widehat f(n) = 0 \ \  \forall \, n < 0  \} \, .
$$
Furthermore, we denote 
by $\partial_x^{-1}$ the operator
$$
\partial_x^{-1} : H^s_{c} \to H^{s+1}_{c,0}, f \mapsto \sum_{n \ne 0} \frac{1}{in} \widehat f(n) e^{inx}\, ,
$$
where for any $s \in \R$, $H^{s}_{c,0} :=  \{ v \in H^s_c \, : \, \langle v | 1 \rangle =0 \}$.
For notational convenience, the restriction of $\partial_x^{-1}$ to $H^{s}_{c,0}$ is also denoted by $\partial_x^{-1}$.

For our purposes, it suffices to consider solutions of \eqref{BO} in the Sobolev spaces $H^s_{r, 0}$ with $s \ge 0$.
For any $u \in H^s_{r, 0}$ with $s \ge 0$, 
we denote by $\mathcal G(u)$ (the following version of)
Tao's gauge transform of $u$  (cf. \cite{Tao}, \cite{MP}), 
\begin{equation}\label{gauge}
\mathcal G(u) : = \partial_x \Pi e^{-i \partial_x^{-1}u}\, .
\end{equation}
It was pointed out in \cite{Tao} that $\mathcal G$ can be viewed as a complex version of the Cole-Hopf transform,
which was introduced independently by Cole and Hopf in the early fifties to convert Burgers' equation $\partial_tu = \partial_x(\partial_x u - u^2)$
into the heat equation -- see e.g. \cite[Section 4.4]{Evans}.
\s
Note that
$$
\partial_x \Pi [e^{-i \partial_x^{-1}u}] = \Pi [\partial_x e^{-i \partial_x^{-1}u}] = -i \Pi [u e^{- i \partial_x^{-1}u}] 
$$
and that for any $s \ge 0$,
$$
\mathcal G : H^s_{r,0} \to H^s_{+,0}\, , \, u \mapsto \partial_x \Pi e^{-i \partial_x^{-1}u} \, ,  
$$
is a real analytic map, where 
$$
H^s_{+,0}:= \{ f \in H^s_{+,0} \, : \, \la f | 1 \ra = 0 \} \, , \qquad H_{+,0} \equiv H^0_{+,0} \, .
$$
It turns out that for any $s \ge 0$, $\mathcal G$ is a diffeomorphism onto an open proper subset of $H^s_{+,0}$ --
see Appendix \ref{Appendix C} for a proof.

Given any initial data $u_0 \in H^s_{r, 0}$ with $s \ge 0$, let $u(t) = \mathcal S(t, u_0)$
and denote by $ w(t) = \mathcal G(u(t))$
the gauge transform of $ u(t)$, i.e.,
$$
w(t) = \partial_x \Pi [e^{-i \partial_x^{-1}u(t)} ]  \, .
$$
For notational convenience, we will often not explicitly indicate the dependence of $u$, $v$, and $w$ on $t$
 in the sequel. Let us derive the equation, satisfied by $w(t)$.
Since $\partial_x^{-1} \partial_x (u^2) = u^2  - \langle u^2 | 1\rangle$ one sees that 
$v(t):= \partial_x^{-1}u(t)$ satisfies
\begin{equation}\label{equation v}
\partial_t v = \textup{H}[\partial^2_x v] - (\partial_x v)^2 + \langle (\partial_x v)^2  | 1 \rangle\, , \qquad
v(0) = \partial_x^{-1} u_0 \, .
\end{equation}
Furthermore, using that $\partial_t w = \partial_x \Pi [-i \partial_t v \cdot e^{-i v}]$ and
$$
\partial_x^2 w = \partial_x \Pi [-i \partial^2_x v \cdot e^{-i v} - (\partial_x v)^2 e^{-i v} ]\, ,
$$
one computes
$$
\partial_t w + i \partial^2_x w = \partial_x \Pi [-i \partial_t v \cdot e^{-i v} + \partial^2_x v \cdot e^{-i v} - i(\partial_x v)^2 e^{-i v} ] \, .
$$
Since for any $f \in H^s_c$, the Hilbert transform $\textup{H}[f]$ of $f$ satisfies $\textup{H}[f] = - i f + 2i(\text{Id} - \Pi) [f]$ one infers that
$$
\partial^2_x v =  i \textup{H}[ \partial^2_x v] + 2 (\text{Id} - \Pi)[ \partial^2_x v] \, .
$$
Combining the latter identity with \eqref{equation v} then yields
$$
\partial_t w + i \partial_x^2 w =  \partial_x \Pi \big[ -i  \langle (\partial_x v)^2  | 1 \rangle e^{-i v}  + 2 ( e^{-i v} \cdot (\text{Id} - \Pi) (\partial^2_x v)  \big] \, .
$$
Finally, writing 
$$e^{-i v} = \Pi e^{-i v} + (\text{Id} -\Pi) [e^{-i v}] \, , \quad 
\Pi e^{-i v} = \partial_x^{-1} w + \langle e^{-iv} | 1 \rangle \, , 
$$
and using that
$$
\Pi \big[ \langle e^{-iv} | 1 \rangle \cdot (\text{Id} - \Pi) (\partial^2_x v) ] = 0 \, , \quad
    \Pi \big[ (\text{Id} - \Pi) e^{-i v}  \cdot (\text{Id} - \Pi) (\partial^2_x v) ] = 0 \, , 
$$
one arrives at
$$
\partial_t w + i \partial_x^2 w =  - i  \langle (\partial_x v)^2  | 1 \rangle w 
+ 2 \partial_x \Pi  [  \partial_x^{-1} w \cdot (\text{Id} - \Pi) (\partial^2_x v) ]\, ,
$$
or, expressing the latter equation in terms of $u = \partial_x v$ instead of $v$, 
\begin{equation}\label{equation w}
\partial_t w + i \partial_x^2 w + i  \langle u^2  | 1 \rangle w  =
2 \partial_x \Pi [  \partial_x^{-1} w \cdot (\text{Id} - \Pi) (\partial_x u) ]\, .
\end{equation}
One verifies in a straightforward way  that $\langle u^2  | 1 \rangle$ is conserved along the flow of \eqref{BO}
so that the left hand side of \eqref{equation w} can be viewed as a linear expression in $w$ with constant coefficients.
We are now ready to state the smoothing properties of $u(t)$. 
\subsection{Approximation of $u(t)$}
For any $u_0 \in H^s_{r,0}$ with $s \ge 0$, let $w_0 := \mathcal G(u_0)$.
Furthermore, denote by $w_L(t)$ the solution of the linear initial value problem
$$
\partial_t w + i \partial_x^2 w + i  \langle u_0^2  | 1 \rangle \, w  = 0 \, , \qquad w(0) = w_0\, .
$$
Then $w_L(t)$ is given by
\begin{equation}\label{def w_L}
w_L(t) = \sum_{n \ge 1} e^{it(n^2 - \langle u_0^2 | 1 \rangle)} \widehat w_0(n) e^{inx} \, .
\end{equation}
Finally, we define 
\begin{equation}\label{sigma}
\sigma(s) := \begin{cases}
1 \qquad \quad \  {\text{if}} \ \  s > 1/2 \\
1- \qquad   {\text{if}} \ \ s = 1/2 \\
2s \qquad \ \ \,   {\text{if}} \ \ 0 \leq  s < 1/2
\end{cases} 
\end{equation}
where $a-$ means $a-\e$ for any $\e >0$. 
\begin{theorem}\label{smoothing 1}
For any $u_0 \in H^s_{r,0}$ with $s\geq 0$ there exists $M_s> 0$ so that 
for any $t \in \R$,
\begin{equation}\label{approx 1}
 \| w(t) -  w_L(t) \|_{s+\sigma(s)} \le  M_s  \la t \ra \, , \qquad  \la t \ra := 1 + |t| \, ,
\end{equation}
\begin{equation}\label{smoothing 1 of u by w_L B}
u(t) = 2 {\rm Re} \big(  e^{i \partial_x^{-1} u(t)} i w_L(t)  \big) + r(t) \, , \qquad \| r(t) \|_{s +\sigma(s)} \le  M_s  \langle t \rangle \, .
\end{equation}
The constant $M_s > 0$ can be chosen uniformly for bounded subsets of initial data $u_0$ in $H^s_{r,0}$. 
Furthermore, for any $0 \le s < 1/2$, there exists $u_0\in H^s_{r,0}$ so that for any $t\ne 0$ and any $\e >0$,
$w(t)-w_L(t)$ does not belong to 
$H_+^{s+\sigma (s)+\e }$ and $r(t)$ not  to $H_r^{s+\sigma (s)+\e }$. 
\end{theorem}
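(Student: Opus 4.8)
The plan is to reduce everything to a single \emph{resonant} Fourier mode, via the nonlinear Fourier transform $\Phi$, and to construct $u_0$ through its Birkhoff coordinates. I treat $w(t)-w_L(t)$ in detail and indicate at the end how the statement for $r(t)$ follows.

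\emph{Step 1: the resonant part of $w(t)-w_L(t)$.} By \cite{GKT1}, $\Phi$ conjugates the flow of \eqref{BO} to the linear flow $\zeta_n\big(u(t)\big)=e^{-i\omega_n(u_0)t}\zeta_n(u_0)$ on the Birkhoff coordinates $(\zeta_n)_{n\ge1}$, where $\omega_n(u)$ denotes the $n$-th Benjamin--Ono frequency. The fact that $\mathcal G$ is a high-frequency approximation of $\Phi$ provides in particular a representation
$$
\widehat{\mathcal G(u)}(n)=\sqrt n\,\zeta_n(u)+E_n(u),
$$
where, uniformly on bounded subsets of $H^s_{r,0}$, the remainder obeys $\big(\la n\ra^{\,s+\sigma(s)+\theta_0}E_n(u)\big)_{n\ge1}\in\ell^2$ for some $\theta_0=\theta_0(s)>0$. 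Setting $u=u(t)$ and comparing with \eqref{def w_L}, this refinement of the analysis behind \eqref{approx 1} yields a splitting
$$
w(t)-w_L(t)=Q(t)+R(t),\qquad \widehat Q(t,n)=\sqrt n\,\zeta_n(u_0)\,e^{\,i(n^2-\mu)t}\big(e^{-i\delta_n t}-1\big),
$$
where $\mu=\la u_0^2\mid 1\ra$, $\delta_n:=\omega_n(u_0)+n^2-\mu$ is the deviation of the Benjamin--Ono frequency from the model frequency $-(n^2-\mu)$ of \eqref{def w_L}, and $\|R(t)\|_{s+\sigma(s)+\theta_0}\le M_s\la t\ra$ (since $\|u(t)\|_s$ stays bounded, being controlled by the conserved actions $|\zeta_n(u_0)|^2$). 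By the explicit formula for $\omega_n$ in \cite{GKT1}, $\delta_n\to0$ as $n\to\infty$, $|\delta_n|\asymp\sum_{k>n}(k-n)\,|\zeta_k(u_0)|^2$, and $\delta_n\ne0$ for every $n$ whenever $\zeta_k(u_0)\ne0$ for all $k$.

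\emph{Step 2: choice of $u_0$ and the key lower bound.} Fix $0\le s<1/2$. Let $\zeta_n:=\e_0\,\la n\ra^{-1-s}\big(\log(2+n)\big)^{-1}$ for $n\ge1$ and put $u_0:=\Phi^{-1}\big((\zeta_n)_{n\ge1}\big)$, with $\e_0>0$ small enough that $(\zeta_n)$ lies in the image of $\Phi$. Since $\sum_{n\ge1}\la n\ra^{2s+1}|\zeta_n|^2\asymp\sum_{n\ge1}\la n\ra^{-1}\big(\log(2+n)\big)^{-2}<\infty$ while $\sum_{n\ge1}\la n\ra^{2(s+\e)+1}|\zeta_n|^2\asymp\sum_{n\ge1}\la n\ra^{2\e-1}\big(\log(2+n)\big)^{-2}=\infty$ for every $\e>0$, we have $u_0\in H^s_{r,0}$ and $u_0\notin H^{s+\e}_r$ for every $\e>0$. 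Moreover $|\zeta_n(u_0)|^2\asymp\la n\ra^{-2-2s}(\log n)^{-2}$, so a direct estimate of the tail sum in Step 1 gives $\delta_n\ne0$ for all $n$ and $|\delta_n|\gtrsim\la n\ra^{-2s}(\log n)^{-2}$ for $n$ large (with the convention $\la n\ra^{0}=1$ when $s=0$).

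\emph{Step 3: conclusion, and the obstacle.} Fix $t\ne0$. As $\delta_n\to0$, there is $N(t)$ with $|\delta_n t|<1$, hence $|e^{-i\delta_n t}-1|\ge\tfrac12|\delta_n t|$, for all $n\ge N(t)$; therefore $|\widehat Q(t,n)|\ge\tfrac12\sqrt n\,|\zeta_n(u_0)|\,|\delta_n|\,|t|\gtrsim|t|\,\la n\ra^{-\frac12-3s}(\log n)^{-3}$ for $n\ge N(t)$, and consequently, for every $\e>0$,
$$
\|Q(t)\|_{s+\sigma(s)+\e}^2\ \gtrsim\ t^2\!\!\sum_{n\ge N(t)}\!\!\la n\ra^{2(3s+\e)}\la n\ra^{-1-6s}(\log n)^{-6}\ =\ t^2\!\!\sum_{n\ge N(t)}\!\!\la n\ra^{2\e-1}(\log n)^{-6}\ =\ +\infty,
$$
so $Q(t)\notin H^{\,s+\sigma(s)+\e}_+$. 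Since $R(t)\in H^{\,s+\sigma(s)+\theta_0}_+\subset H^{\,s+\sigma(s)+\e}_+$ for $0<\e\le\theta_0$, it follows that $w(t)-w_L(t)=Q(t)+R(t)\notin H^{\,s+\sigma(s)+\e}_+$ for $0<\e\le\theta_0$, hence for every $\e>0$. Starting from \eqref{smoothing 1 of u by w_L B} and again using the $\Phi$-representation of $u(t)$ and of $w_L(t)$, one obtains for $r(t)$ a splitting $r(t)=\tilde Q(t)+\tilde R(t)$ of the same shape, with $\|\tilde R(t)\|_{s+\sigma(s)+\theta_0}\le M_s\la t\ra$ and $|\widehat{\tilde Q}(t,n)|\asymp\sqrt n\,|\zeta_n(u_0)|\,|e^{-i\delta_n t}-1|$; the computation just made then shows $r(t)\notin H^{\,s+\sigma(s)+\e}_r$ for every $\e>0$. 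The crux of the whole argument is the splitting of Step 1 with a remainder gaining \emph{strictly} more than $\sigma(s)$ derivatives: one must sharpen the Duhamel/normal-form estimates for \eqref{equation w} behind \eqref{approx 1}, using that $\partial_x\Pi\!\big[\partial_x^{-1}w\cdot(\text{Id}-\Pi)(\partial_x u)\big]$ is an off-diagonal interaction (its two factors have frequencies of opposite sign) and therefore gains more than $\sigma(s)$ derivatives away from the diagonal, show that the higher Picard iterates are negligible at this level, and compare $w$ with the $\Phi$-variable to one order beyond what \eqref{approx 1} requires. Granting this, Steps 2 and 3 are elementary, the only inputs being the asymptotics of $\delta_n$ and the conservation of the actions.
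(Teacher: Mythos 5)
Your proposal only addresses the sharpness statement of the theorem; the bounds \eqref{approx 1} and \eqref{smoothing 1 of u by w_L B} are nowhere proved (you repeatedly invoke ``the analysis behind \eqref{approx 1}''), and, more seriously, the splitting of Step 1 -- a remainder gaining \emph{strictly more} than $\sigma(s)$ derivatives -- is exactly the point you concede at the end is not established, proposing instead an unexecuted Duhamel/Picard/normal-form analysis of \eqref{equation w}. This is a genuine gap as the argument stands, but it is also an unnecessary detour: the needed input is precisely Theorem \ref{Theorem Phi - Phi_L} combined with \eqref{w and Phi_0} and \eqref{evolution in Birkhoff}, i.e.\ the enhanced approximation \eqref{approx 2} of Theorem \ref{smoothing 2}, which gives $\| w(t)-w_{L,\ast}(t)\|_{s+\tau(s)}\le M_s$ with $\tau(s)=s+\tfrac12>2s=\sigma(s)$ for $0\le s<\tfrac12$; your $Q(t)$ is, up to an error $\rho_n(0)\big(e^{it\omega_n}-e^{it(n^2-\la u_0^2|1\ra)}\big)=\h_n^{s+\tau(s)}$, nothing but $w_{L,\ast}(t)-w_L(t)$. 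Once \eqref{approx 2} is cited, your Steps 2--3 are correct and in fact give a slightly more direct route to the non-membership of $w(t)-w_L(t)$ than the paper's argument (same example $\gamma_n\asymp n^{-2-2s}(\log n)^{-2}$, same resonant-phase lower bound, but you lower-bound the resonant amplitude directly through the prescribed $\zeta_n(u_0)$ instead of deducing $w_0\in H^{s+\delta}_+$ and transferring regularity back to $u_0$ via Toeplitz inversion). Note also that no smallness of $\e_0$ is needed, since $\Phi:H^s_{r,0}\to\h^{s+1/2}$ is onto.

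The second genuine gap is the statement for $r(t)$. You assert a splitting $r(t)=\tilde Q(t)+\tilde R(t)$ with $|\widehat{\tilde Q}(t,n)|\asymp\sqrt n\,|\zeta_n(u_0)|\,|e^{-i\delta_nt}-1|$, but this modewise comparison is not legitimate: by \eqref{smoothing 1 of u by w_L B} and \eqref{smoothing 2 of u by w_L B} one has $r(t)-r_\ast(t)=2\,{\rm Re}\big[e^{i\partial_x^{-1}u(t)}\big(w_L(t)-w_{L,\ast}(t)\big)\big]$, and the multiplication by $e^{i\partial_x^{-1}u(t)}$ mixes Fourier modes, so the Fourier coefficients of $\tilde Q(t)$ are not comparable, mode by mode, to those of $w_L(t)-w_{L,\ast}(t)$. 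To conclude one must argue as the paper does: assume $r(t)\in H^{3s+\e}_r$, apply $\Pi$, remove the anti-analytic contribution by the Hankel smoothing of Lemma \ref{Hankel}, and invert the Toeplitz operator $T_{e^{i\partial_x^{-1}u(t)}}$ to deduce $w_L(t)-w_{L,\ast}(t)\in H^{3s+\e}_+$, contradicting the part already proved. Without this (or an equivalent) step, the claim about $r(t)$ is not established.
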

\begin{remark}
The estimate \eqref{approx 1} in Theorem \ref{smoothing 1} improves on \cite[Theorem 1.2]{IMOS} in the following ways:
(i) the estimate holds for $H^s_{r,0}$ with $s > 0$ arbitrary instead of $1/6 < s \le 1$;
(ii) the estimate holds for any $t \in \R$ with an explicit growth rate in $t$ instead for compact time intervals $[0, T]$;
(iii) for $1/2 < s \le 1$, the order of smoothing is $1$ instead of $(1/3)-$, and for $1/6 < s < 1/2$, it
is $2s$ instead of $(s - 1/6)-$; (iv) for any $0 \le s < 1/2$, the estimate is sharp.
\end{remark}
\begin{remark}
The  estimate  \eqref{smoothing 1 of u by w_L B} 
for $s >1/2$ answers a question, raised by Tzvetkov in \cite{T}, and  improves the estimate conjectured in \cite{T}.
\end{remark}
\subsection{Enhanced approximation of $u(t)$}
It turns out that an enhanced version $w_{L, \ast}(t)$ of $w_L(t)$ is obtained by replacing 
 for any $n \ge 1$ the frequency $n^2 - \langle u_0^2 | 1 \rangle$ in the $n$th summand in \eqref{def w_L} by the 
$n$th BO frequency $\omega_n \equiv \omega_n(u_0)$.
To define $\omega_n$, let us recall the definition of the Lax operator 
of \eqref{BO}, 
\begin{equation}\label{def L_u}
L_u := \frac{1}{i} \partial_x - T_u \, , 
\end{equation}
acting on the Hardy space
$H_+ \equiv H^0_+$ with domain $H^1_+$. Here $T_u$ denotes the Toeplitz operator with symbol $u$, given by
$$
T_u[f] := \Pi[u f] \, , \qquad \forall f \in H^1_+ \, .
$$
The operator $L_u$ is self-adjoint and bounded from below. Since it has a compact resolvent, its spectrum is discrete.
We list the eigenvalues of $L_u$ in increasing order and with their multiplicites, $\lambda_0(u) \le \lambda_1(u) \le \lambda_2(u) \le \cdots$. 
By \cite[Proposition 2.1]{GK} 
\begin{equation}\label{def gamma_n}
\gamma_n := \lambda_n - \lambda_{n-1} - 1 \ge 0 \, , \ \ \ \forall \, n \ge 1 \, ,
\end{equation}
and by \cite[Proposition 3.1]{GK}, the following trace formulas hold,
\begin{equation}\label{trace formulas}
\lambda_n = n -2\sum_{k \ge n+1} \gamma_k\, , \quad \forall n \ge 0\, , \qquad   \|u\|_0^2 = 2 \sum_{n \ge 1} n \gamma_n \, ,
\end{equation}
where for notational convenience, we (often) do not indicate the dependence of $\lambda_n$ ($n \ge 0$) and $\gamma_n$ ($n \ge 1)$
on $u$. In particular, all eigenvalues of $L_u$ are simple. 
The $n$th BO frequency is then given by (cf. \cite[formula (8.4)]{GK})
\begin{equation}\label{formula frequencies}
\omega_n = n^2 -  \la u_0^2 \, | \, 1 \ra + 2 \sum_{k > n} (k-n) \gamma_k \, .
\end{equation}
By \cite[Proposition 5]{GKT1},  
it follows that $ \omega_n(u_0)-(n^2 - \la u_0^2\vert 1\ra) = O(n^{-2s})$ as $n \to \infty$, uniformly 
on bounded subsets of initial data $u_0$ in $H^s_{r,0}$. 
Now we can define the enhanced approximation of $w$,
\begin{equation}\label{def w_L*}
w_{L, \ast}(t) : = \sum_{n \ge 1} e^{it \omega_n} \widehat w_0(n) e^{inx} \, .
\end{equation}
To state our enhanced approximation result, we introduce 
\begin{equation}\label{tau}
\tau(s) := \begin{cases}
1 \qquad \quad \  {\text{if}} \ \  s > 1/2 \\
1- \qquad   {\text{if}} \ \ s = 1/2 \\
s+\frac 12 \quad  \ \,   {\text{if}} \ \ 0 \leq s < 1/2
\end{cases} 
\end{equation}
\begin{theorem}\label{smoothing 2}
For any $u_0 \in H^s_{r,0}$ with $s\geq 0$ there exists $M_s> 0$ so that for any $t\in \R$,
\begin{equation}\label{approx 2}
  \| w(t) -  w_{L, \ast}(t) \|_{s+\tau (s)}  \le  M_s   \, , 
\end{equation}
\begin{equation}\label{smoothing 2 of u by w_L B}
u(t) = 2 {\rm Re} \big(  e^{i \partial_x^{-1} u(t)} i w_{L,\ast}(t)  \big) + r_\ast(t)\, ,\qquad 
 \| r_\ast(t) \|_{s +\tau (s)} \le  M_s \, .
\end{equation}
The constant $M_s > 0$ can be chosen uniformly for bounded subsets of initial data $u_0$ in $H^s_{r,0}$. 
\end{theorem}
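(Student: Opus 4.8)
The plan is to derive both estimates of Theorem~\ref{smoothing 2} from a single structural fact: that Tao's gauge transform $\mathcal G$ agrees, at high frequencies, with the nonlinear Fourier transform $\Phi$ of \cite{GKT1}, which linearizes the flow of \eqref{BO}. Recall that $\Phi(u)=(\zeta_n(u))_{n\ge1}$ is a real analytic diffeomorphism from $H^s_{r,0}$ onto a weighted sequence space, that $|\zeta_n(u)|^2$ is a spectral invariant of the Lax operator $L_u$ (a function of the gaps $\gamma_k$ of \eqref{def gamma_n}), and that along the flow $u(t)=\mathcal S(t,u_0)$ one has $\zeta_n(u(t))=e^{it\omega_n}\zeta_n(u_0)$ with the BO frequencies $\omega_n$ of \eqref{formula frequencies}. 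The key input I would establish is a comparison lemma of the form
\[
\widehat{\mathcal G(u)}(n)=\psi_n(u)+\rho_n(u)\,,\qquad n\ge1\,,
\]
where $\psi_n$ is built from the spectral data of $L_u$ and evolves, along the flow of \eqref{BO}, by $\psi_n(u(t))=e^{it\omega_n}\psi_n(u_0)$ — concretely $\psi_n(u)=\kappa_n(u)\zeta_n(u)$ for a normalizing factor $\kappa_n(u)\in\C$ that is itself a spectral invariant of $L_u$, hence conserved — and the remainder $\rho(u):=\sum_{n\ge1}\rho_n(u)e^{inx}$ lies in $H^{s+\tau(s)}_+$ with $\|\rho(u)\|_{s+\tau(s)}$ bounded on bounded subsets of $H^s_{r,0}$. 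This is the precise sense in which $\mathcal G$ is a high frequency approximation of $\Phi$, the admissible gain being $\tau(s)$ rather than merely $\sigma(s)$.

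Granting the lemma, \eqref{approx 2} follows at once. Put $w(t)=\mathcal G(u(t))$, so $w(0)=w_0$. Applying the lemma at times $t$ and $0$ and using the transformation law of $\psi_n$, the main terms cancel and one is left with $\widehat{w(t)}(n)-e^{it\omega_n}\widehat{w_0}(n)=\rho_n(u(t))-e^{it\omega_n}\rho_n(u_0)$ for all $n\ge1$, that is, $w(t)-w_{L,\ast}(t)=\rho(u(t))-e^{it\Omega}\rho(u_0)$, where $e^{it\Omega}$ denotes the Fourier multiplier $\widehat f(n)\mapsto e^{it\omega_n}\widehat f(n)$, an isometry of $H^{s+\tau(s)}_+$. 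Since all gaps $\gamma_n$ are conserved along the flow of \eqref{BO}, the orbit $\{u(t):t\in\R\}$ stays in a bounded subset of $H^s_{r,0}$ whose size depends only on $\|u_0\|_s$ (a manifestation of the integrability of \eqref{BO}, cf.\ \cite{GKT1}). Hence $\|\rho(u(t))\|_{s+\tau(s)}\le M_s/2$ and $\|\rho(u_0)\|_{s+\tau(s)}\le M_s/2$, and \eqref{approx 2} follows with the stated uniformity in $t$ and on bounded sets of initial data.

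For \eqref{smoothing 2 of u by w_L B} I would invert the gauge transform. Differentiating $e^{-i\partial_x^{-1}u}$ and using $\partial_x e^{-i\partial_x^{-1}u}=-iu\,e^{-i\partial_x^{-1}u}$ — together with the fact that, by reality of $\partial_x^{-1}u$, the negative frequency part of $e^{-i\partial_x^{-1}u}$ is the conjugate of the positive frequency part of $e^{i\partial_x^{-1}u}$ — yields a reconstruction identity $u=2\,{\rm Re}\big(e^{i\partial_x^{-1}u}\,i\,\mathcal G(u)\big)+q(u)$, in which $q(u)$ collects the contribution of the complementary gauge quantity; by the same comparison lemma, $q(u)\in H^{s+\tau(s)}_r$ with norm bounded on bounded sets (alternatively, one reads this off from the real analytic inverse of $\mathcal G$, cf.\ Appendix~\ref{Appendix C}). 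Substituting $\mathcal G(u(t))=w_{L,\ast}(t)+(w(t)-w_{L,\ast}(t))$ produces the additional error $2\,{\rm Re}\big(e^{i\partial_x^{-1}u(t)}\,i\,(w(t)-w_{L,\ast}(t))\big)$. Since $\partial_x^{-1}u(t)\in H^{s+1}_r$ with norm $\lesssim\|u_0\|_s$ and $s+1>1/2$, the function $e^{i\partial_x^{-1}u(t)}$ lies in $H^{s+1}_c$; the Sobolev multiplication estimate $H^{s+1}\cdot H^{s+\tau(s)}\hookrightarrow H^{s+\tau(s)}$ (valid since $\tau(s)\le1$, with the usual $\e$-room at $s=0$, where one uses $e^{i\partial_x^{-1}u(t)}\in H^{1/2+\e}_c$) together with \eqref{approx 2} bounds this term in $H^{s+\tau(s)}_r$ uniformly in $t$. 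Setting $r_\ast(t)$ equal to the sum of the two remainders gives \eqref{smoothing 2 of u by w_L B}.

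The main obstacle is the comparison lemma itself, i.e.\ showing that $\widehat{\mathcal G(u)}(n)$ and $\psi_n(u)$ differ by an element of $H^{s+\tau(s)}_+$. The same lemma also underlies Theorem~\ref{smoothing 1}, but there the cruder gain $\sigma(s)$ already suffices, since the secular, non-smoothing part of $w(t)-w_L(t)$ comes precisely from the frequency mismatch $\omega_n-(n^2-\langle u_0^2\,|\,1\rangle)=2\sum_{k>n}(k-n)\gamma_k$, which is absent from $w(t)-w_{L,\ast}(t)$. Establishing the lemma with the sharp gain $\tau(s)$, in particular the extra half derivative when $0\le s<1/2$, is where the work lies: I would expand $\Pi e^{-i\partial_x^{-1}u}$ in the orthonormal eigenbasis of $L_u$, identify $\widehat{\mathcal G(u)}(n)$ with the corresponding component of $\Phi(u)$ up to a remainder, and control that remainder using the bound $\omega_n-(n^2-\langle u_0^2\,|\,1\rangle)=O(n^{-2s})$ from \cite{GKT1} and the fine asymptotics of the eigenfunctions of $L_u$ developed there. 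This is exactly the point at which the quasi-linear character of \eqref{BO} is used.
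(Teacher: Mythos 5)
Your proof of \eqref{approx 2} is, in substance, the paper's own argument: the ``comparison lemma'' you postulate is exactly Theorem \ref{Theorem Phi - Phi_L} combined with the identity \eqref{w and Phi_0} (with the constant factor $i\sqrt n$ playing the role of your conserved $\kappa_n(u)$), and the remaining steps --- cancellation of the main terms via \eqref{evolution in Birkhoff}, application of the same identity at $t=0$, the fact that $(e^{it\omega_n})_n$ acts isometrically on the relevant sequence space, and uniform boundedness of the orbit in $H^s_{r,0}$ --- are precisely those of the paper. In particular there is no need to re-derive the comparison lemma as you propose in your last paragraph; it is available as a prior result. Likewise, your derivation of \eqref{smoothing 2 of u by w_L B} follows the paper's route: factor $u=e^{iv}\,e^{-iv}u$ with $v=\partial_x^{-1}u$, use $\Pi[e^{-iv}u]=i\,\mathcal G(u)$, insert $\mathcal G(u(t))=w_{L,\ast}(t)+(w(t)-w_{L,\ast}(t))$, take $2\,{\rm Re}$, and estimate the error containing $w(t)-w_{L,\ast}(t)$ by the product estimate $H^{s+1}_c\cdot H^{s+\tau(s)}_c\subset H^{s+\tau(s)}_c$ together with \eqref{approx 2} and the uniform bound on $\|u(t)\|_s$.

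The one genuine weak point is your justification that the reconstruction remainder $q(u)$ lies in $H^{s+\tau(s)}_r$ with norm bounded on bounded sets. Neither ``the same comparison lemma'' nor the diffeomorphism property of $\mathcal G$ in Appendix \ref{Appendix C} gives this: the comparison of $\widehat{\mathcal G(u)}(n)$ with $i\sqrt n\,\zeta_n(u)$ says nothing about the two terms making up $q(u)$, namely $-2\,{\rm Re}\big((\mathrm{Id}-\Pi)[e^{iv}\,i\,\mathcal G(u)]\big)$ and $2\,{\rm Re}\big(\Pi[e^{iv}(\mathrm{Id}-\Pi)(e^{-iv}u)]\big)$, and Theorem \ref{diffeogauge} only provides invertibility of $\mathcal G$, not a gain of regularity. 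What these terms require --- and what the paper uses --- is the smoothing of Hankel operators with symbol $e^{iv}\in H^{s+1}_c$, i.e.\ Lemma \ref{Hankel} of Appendix \ref{Hankel operators} with $\alpha=1$: since $\mathcal G(u)\in H^s_{+,0}$, the first term is a Hankel operator of type $H^-$ with symbol $e^{iv}$ applied to $\mathcal G(u)$, and the second is the Hankel operator $H_{e^{iv}}$ applied to $(\mathrm{Id}-\Pi)(e^{-iv}u)\in H^s_-$; items (i)--(iii) of that lemma then give exactly the gain $\tau(s)$ of \eqref{tau} (the borderline case $s=1/2$ producing the $1-$), uniformly on bounded sets because $\|u(t)\|_s$ is bounded uniformly in $t$. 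With this substitution your argument closes and coincides with the paper's proof.
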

\begin{remark}
Smoothing properties can also be proved for solutions in some Sobolev spaces $H^s_{r,0}$ with $s$ negative.
In order to limit the size of the paper, we decided to focus on solutions 
in $H^s_{r,0}$ with $s \ge 0$.
\end{remark}
Note that, in addition to providing a better gain of regularity for $s$ in the interval $0 \le s <1/2$, 
the estimates of Theorem \ref{smoothing 2} are {\em uniform in time}. 
These improvements  are obtained by taking into account that the BO equation is {\em integrable}  
and as a consequence that the BO dynamics are determined by the BO frequencies.

%
\subsection{High frequency approximation of the nonlinear Fourier transform}
A second key ingredient in the proof of the smoothing properties of solutions of \eqref{BO} is the high frequency approximation 
of the nonlinear Fourier transform $\Phi$ of the Benjamin-Ono equation, which was constructed in \cite{GK}, \cite{GKT1}.
Let us review the definition of $\Phi$ and the properties of $\Phi$ needed to state our smoothing results for solutions of \eqref{BO}.
To this end, we first need  to review further properties of  the Lax operator $L_u$, introduced in the previous subsection.
 It is shown in \cite{GK} that $L_u$ admits an orthonormal basis of eigenfunctions $f_n \equiv f_n(\cdot, u) \in H^1_+$, $n \ge 0$, 
uniquely determined by the normalisation conditions
\begin{equation}\label{normalisation f_n}
\langle f_0 | 1 \rangle > 0 \, , \qquad \langle f_n | e^{ix} f_{n-1} \rangle > 0 \, , \ \ \forall \, n \ge 1 \, .
\end{equation}
For any $s \in \R$, denote by $\h^{s} \equiv \h^{s} (\N, \C)$ the weighted $\ell^2$-sequence space
$$
\h^{s} := \{ z = (z_n)_{n \ge 1} \subset \C \, : \,  \|z\|_s < \infty \} \, , \qquad
\|z\|_s := ( \sum_{n \ge 1} n^{2s} |z_n|^2 )^{1/2} \, .
$$
In \cite{GK}, we introduced the map 
\begin{equation}\label{def Birkhoff}
\Phi : H^{0}_{r, 0} \to \h^{1/2} , \, u \mapsto (\zeta_n(u))_{n \ge 1} \, , \qquad \zeta_n(u) := \frac{\langle 1 | f_n(\cdot, u) \rangle}{\sqrt{\kappa_n(u)}} \,, 
\end{equation}
and proved that $\Phi$ is a homeomorphism and that
\begin{equation}\label{absolute value Birkhoff coordinates}
|  \zeta_n(u) |^2 = \gamma_n(u)\, , \qquad \forall \, n \ge 1 \, , \ \forall \, u \in H^0_{r,0} \, .
\end{equation}
 Here $\kappa_n \equiv \kappa_n(u) > 0$, $n \ge 1$,  are defined as absolutely convergent infinite products,
\begin{equation}\label{formula kappa_n}
\kappa_n = \frac{1}{\lambda_n - \lambda_0} \prod_{1\leq p \ne n} (1 - \frac{\gamma_p}{\lambda_p - \lambda_n}) \, .
\end{equation}
It is shown in \cite{GKT1} that for any $s \ge 0$, the restriction of $\Phi$ to $H^{s}_{r, 0}$ takes values in $\h^{s + 1/2}$
and in  \cite{GKT3}-\cite{GKT4} that 
\begin{equation}\label{restrictions of Phi}
\Phi : H^{s}_{r, 0} \to \h^{s + 1/2}
\end{equation} 
is a real analytic diffeomorphism.

One of the principal features of $\Phi$ is that it can be used to solve the initial value problem of \eqref{BO}.
Indeed, it is shown in \cite{GKT1} that for any initial data $u_0 \in H^s_{r, 0}$ with $s \ge 0$,  
the solution $t \mapsto u(t) \in H^s_{r, 0}$ of \eqref{BO} with initial data $u(0) = u_0$ satisfies
\begin{equation}\label{evolution in Birkhoff}
\Phi(u(t)) = ( e^{it \omega_n} \zeta_n(u_0))_{n \ge 1} \, , 
\end{equation}
where $\omega_n \equiv \omega_n(u_0)$, $n \ge 1$, denote the BO frequencies of $u_0$,
introduced in \eqref{formula frequencies} above.
The high frequency approximation of $\Phi :H^{0}_{r, 0} \to \h^{1/2}$ is then defined as the map $\Phi_0:H^{0}_{r, 0} \to \h^{ 1/2}$,
given by
\begin{equation}\label{approximation Phi_0}
\Phi_0(u):=  \big( \sqrt{n} \langle 1 | g_\infty e^{inx}  \rangle \big)_{n \ge 1} \, , \qquad   g_\infty \equiv g_\infty(\cdot, u):= e^{i \partial_x^{-1}u} \, .
\end{equation}
Note that $\Phi_0$ is a quasi-linear perturbation of the Fourier transform. 
Indeed, since 
$$
n \langle 1 \,  | \, g_\infty e^{inx}  \rangle =  \langle \overline{g_\infty} \,  | \, n e^{inx}  \rangle = \langle \overline{g_\infty} \, | \, \frac{1}{i} \partial_x e^{inx}  \rangle \, ,
$$ 
integration by parts yields
\begin{equation}\label{formula 1}
n \langle 1 \, | \, g_\infty e^{inx}  \rangle =  \langle \frac{1}{i} \partial_x \overline{g_\infty} \, | \, e^{inx}  \rangle  = - \langle u \overline{g_\infty} \, |  \, e^{inx}  \rangle \, .
\end{equation}
This shows that
\begin{equation}\label{quasilinear perturbation of Fourier transform}
\Phi_0(u) = ( - \frac{1}{\sqrt{n}} \langle u \, | \, g_\infty e^{inx} \rangle)_{n \ge 1} \, .
\end{equation}
Since $g_\infty$ is a function of $\partial_x^{-1} u$, the map $\Phi_0$ can be viewed, up to scaling, as 
a quasi-linear perturbation of the Fourier transform.

The following smoothing properties of
 $\Phi - \Phi_0$, which are of independent interest,
 are key ingredients in the proofs of Theorem \ref{smoothing 1} and Theorem \ref{smoothing 2}.
\begin{theorem}\label{Theorem Phi - Phi_L}
 For any $s \geq 0$, the map $\Phi - \Phi_0$ is smoothing of order $\tau (s)$ with $\tau(s)$ as defined in \eqref{tau}, i.e., $\Phi - \Phi_0$ is a continuous map
from $H^s_{r, 0}$ with values in $\h^{s+1/2 + \tau (s)}$.
Furthermore, there exists $u \in H^{1/2}_{r,0}$ with the property that $\Phi (u)-\Phi_0(u)\notin \h^2$
and similarly, for any $0 < s <1/2$, there exists $ u\in H^s_{r,0}$ so that for any $\e >0$,
$\Phi (u)-\Phi_0(u)\notin \h^{s+1/2+\tau (s)+\e}$.
\end{theorem}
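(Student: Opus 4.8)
\emph{Strategy.} The statement has two halves: the smoothing estimate — continuity of $\Phi-\Phi_0\colon H^s_{r,0}\to\h^{s+1/2+\tau(s)}$ — and the optimality of $\tau(s)$ for $0<s\le 1/2$. The basic input throughout is that $u\in H^s_{r,0}$ with $s\ge 0$ yields $\partial_x^{-1}u\in H^{s+1}_r$ with $s+1>1/2$, so $H^{s+1}_r$ is a Banach algebra stable under composition with entire functions; hence $g_\infty=e^{i\partial_x^{-1}u}\in H^{s+1}_c$ with $\|g_\infty-1\|_{s+1}$ bounded in terms of $\|u\|_s$, and in particular the Fourier coefficients of $g_\infty$ decay with $\ell^2$-rate $n^{-(s+1)}$, uniformly on bounded subsets of $H^s_{r,0}$.

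\emph{The smoothing estimate.} The plan is to derive a precise asymptotic expansion of $\zeta_n(u)-\sqrt n\,\la 1\,|\,g_\infty e^{inx}\ra$ as $n\to\infty$. I would start from the observation that $\Pi[g_\infty e^{inx}]$ is an approximate eigenfunction of $L_u$: from $\tfrac1i\partial_x g_\infty=u\,g_\infty$ one computes $(L_u-n)\,\Pi[g_\infty e^{inx}]=\Pi\bigl[u\,(\mathrm{Id}-\Pi)(g_\infty e^{inx})\bigr]=:\varepsilon_n$, the right-hand side involving only the high negative Fourier modes of $g_\infty$ together with the Fourier tails of $u$. The exponent $\tau(s)$, including its two regimes around $s=1/2$, emerges in the estimate of $\varepsilon_n$, where $\sum_{p\ge 1}\bigl(\sum_{k\ge p}|\widehat u(k)|^2\bigr)\asymp\|u\|_{1/2}^2$ marks the borderline. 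Next, since $\lambda_m=m-2\sum_{k>m}\gamma_k$ with $\sum_k\gamma_k<\infty$ (by \eqref{trace formulas}), for large $n$ the point $n$ lies within $o(1)$ of $\lambda_n$ but at distance $\gtrsim 1$ from every other eigenvalue of $L_u$; spectral perturbation then gives $\Pi[g_\infty e^{inx}]=c_n f_n+\rho_n$ with $c_n\to 1$ (the phase being fixed by the sign normalisation \eqref{normalisation f_n}) and $\|\rho_n\|_0$ controlled by $\|\varepsilon_n\|_0$. Feeding this, together with the asymptotics of $\kappa_n$ obtained from \eqref{formula kappa_n}, the identity $\lambda_n-\lambda_0=n+2\sum_{k=1}^n\gamma_k$, and the decay of $\gamma_n$ encoded in $\Phi(u)\in\h^{s+1/2}$, into $\zeta_n=\la 1|f_n\ra/\sqrt{\kappa_n}$, one obtains $\zeta_n=\sqrt n\,\la 1|g_\infty e^{inx}\ra$ plus finitely many explicitly identified error terms, each the $n$th entry of a sequence in $\h^{s+1/2+\tau(s)}$ with norm bounded by $\|u\|_s$; continuity follows since $f_n,\kappa_n,g_\infty$ depend continuously (in fact real-analytically) on $u$ and the tail bounds are uniform. \textbf{The main obstacle} is that the crude triangle-inequality bounds on these error terms lose roughly half a derivative: one must exploit genuine cancellations and the refined asymptotics of $f_n,\lambda_n,\kappa_n$ from \cite{GK},\cite{GKT1}, which is exactly the quasi-linear effect of the abstract.

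\emph{Optimality.} I would first isolate the obstruction by a Taylor expansion at $u=0$. Since $\Phi\colon H^s_{r,0}\to\h^{s+1/2}$ is a real-analytic diffeomorphism, the Birkhoff coordinates may be prescribed, and a direct computation shows that $\Phi$ and $\Phi_0$ have the \emph{same} differential at $0$, namely $h\mapsto(-\overline{\widehat h(-n)}/\sqrt n)_{n\ge 1}$. The first nontrivial contribution to $\Phi-\Phi_0$ near $0$ is therefore the bilinear form $\tfrac12\bigl(D^2\Phi(0)-D^2\Phi_0(0)\bigr)$, which — as $\kappa_n=\tfrac1n+O(u^2)$ contributes nothing at this order — is computed from second-order Rayleigh--Schr\"odinger perturbation for the eigenfunctions of $L_u=\tfrac1i\partial_x-T_u$ and from the elementary expansion of $g_\infty$, giving, for real $h$ (up to an overall complex conjugation, immaterial here),
$$(D^2\Phi(0)[h,h])_n=\frac{2}{\sqrt n}\sum_{\substack{k\ge 1\\ k\ne n}}\frac{\widehat h(k)\,\widehat h(n-k)}{n-k}\,,\qquad (D^2\Phi_0(0)[h,h])_n=\sqrt n\sum_{\substack{k+l=n\\ k,l\ne 0}}\frac{\widehat h(k)\,\widehat h(l)}{k\,l}\,.$$
Note that these two bilinear multipliers are individually \emph{unbounded} into $\h^{s+1/2+\tau(s)}$, the gain coming only from their cancellation. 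To convert this into the sharpness statement I would construct the potential directly from its spectral data: fix $(\zeta_n)$ with a critical profile — e.g.\ supported on a lacunary set $\{2^j\}$ with $\gamma_{2^j}=|\zeta_{2^j}|^2$ tuned so that $u=\Phi^{-1}(\zeta)$ sits exactly on the boundary of $H^s_{r,0}$ (resp.\ of $H^{1/2}_{r,0}$) — for which $g_\infty=e^{i\partial_x^{-1}u}$ and hence $\Phi_0(u)$ can be estimated with precision (using the control of $u$ in terms of $\zeta$ from the previous part and the near-independence of the lacunary blocks), and show that $\Phi(u)-\Phi_0(u)=\zeta-\Phi_0(u)$ has $n$th entry sitting exactly at the borderline of $\h^{s+1/2+\tau(s)}$, hence $\notin\h^{s+1/2+\tau(s)+\e}$ for every $\e>0$. \textbf{The delicate point} in this part is precisely to make the non-perturbative estimate of $\Phi_0(u)$ sharp enough to detect the cancellation exhibited by the two displayed multipliers.
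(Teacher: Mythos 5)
Your outline reproduces the general architecture of the argument (compare $\Phi$ with an eigenfunction-based quantity using the asymptotics of $\kappa_n$, then compare that with $\Phi_0$ via the quasimode identity $(L_u-n)\Pi[g_\infty e^{inx}]=\Pi[u(\mathrm{Id}-\Pi)(g_\infty e^{inx})]$, which is essentially the paper's equation for $g_n=e^{-inx}f_n$), but in both halves the step that actually produces the theorem is left open. For the smoothing estimate, the bound you propose from spectral perturbation, $|\la 1|\rho_n\ra|\le\|\rho_n\|_0\lesssim\|\varepsilon_n\|_0$, gives \emph{no} gain at all: for $s>1/2$ one has $\|\varepsilon_n\|_0=\h_n^{s+1}$, hence $\sqrt n\,|\la 1|\rho_n\ra|=\h_n^{s+1/2}$, which is only the trivial statement $\Phi-\Phi_0\colon H^s_{r,0}\to\h^{s+1/2}$, and in the regime $0\le s<1/2$ the loss is comparable. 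The entire content of the theorem is the cancellation you flag as ``the main obstacle'' and do not supply. In the paper this is done by decomposing $\Xi_n=n\la 1|f_n\ra-n\la 1|g_\infty e^{inx}\ra$ into $T_{1,n}+T_{2,n}+T_{3,n}$, where the borderline term is the Hankel-type term $T_{2,n}=\la(\mathrm{Id}-\Pi)u\,|\,g_\infty e^{inx}\ra$, whose $\h^{s+\tau(s)}$ bound is exactly Lemma \ref{Hankel} with $\alpha=1$ (this is where the three regimes of $\tau(s)$ come from), while $T_{3,n}=\la\Pi u|e^{inx}(g_\infty-g_n)\ra$ is handled by solving a linear equation for $g_\infty-g_n$ (Lemma \ref{ginfty -gn}), expanding in a Neumann series in the operators $K_n,K_n'$, and exploiting the explicit computation of $K_ng_\infty$ together with the orthogonality $\la\Pi u|e^{inx}\Pi_{<-n}g_\infty\ra=0$ and a bilinear estimate; none of this machinery, nor any substitute for it, appears in your proposal.

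For optimality, your plan (second-order Taylor expansion at $u=0$, then a lacunary potential prescribed through $\Phi^{-1}$) is not carried out and, as sketched, does not obviously close: the claimed counterexamples are borderline elements of $H^s_{r,0}$, not small, so a statement about the cancellation of the two quadratic multipliers at $u=0$ does not control the full nonlinear difference $\Phi(u)-\Phi_0(u)$ there, and the ``sharp non-perturbative estimate of $\Phi_0(\Phi^{-1}(\zeta))$'' for lacunary $\zeta$ that you acknowledge as the delicate point is itself a nontrivial inverse-spectral problem at critical regularity. The paper avoids this entirely: it first shows that the remainder $\Xi_n-T_{2,n}$ is strictly smoother ($\tau_1(s)>\tau(s)$ for $s>0$), so that optimality reduces to the single Hankel term; then, for the explicit non-lacunary profile $u=-v-\overline v$ with $\widehat v(k)=k^{-1/2-s}/\log(1+k)$ (and a log-tuned variant at $s=1/2$), it uses positivity of the Fourier coefficients of $e^{i\partial_x^{-1}v}$ to bound the full nonlinear term from below by its bilinear part $\Pi[(i\partial_x^{-1}v)\overline v]$, computes an explicit lower bound on the resulting convolution sums, and removes the extra factor $e^{-i\partial_x^{-1}\overline v}$ by invertibility of the corresponding Toeplitz operator. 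Your proposal contains no mechanism playing the role of either the $\tau_1(s)$-improvement of the remainder or the positivity argument, so as it stands neither half constitutes a proof.
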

\begin{remark} 
(i) Theorem \ref{Theorem Phi - Phi_L} says that $\Phi_0$ can be viewed as a {\em quasi-linear} high frequency approximation of $\Phi$.

\noindent
(ii) Note that for any $u \in H^0_{r,0}$, one has
$$
\mathcal G(u) = \partial_x \Pi(e^{-i\partial_x^{-1}u}) = 
\partial_x \Pi(\overline{ g_\infty})  = \sum_{n \ge 1}  in  \langle \overline{ g_\infty} | e^{inx}  \rangle e^{inx} 
$$
and hence
\begin{equation}\label{w and Phi_0}
 \Phi_0(u) = \big(-  \frac{i}{  \sqrt{n}} \langle \mathcal G(u) \,  | \, e^{inx} \rangle \big)_{n \ge 1} \, .
\end{equation}
It then follows from Theorem \ref{diffeogauge} in Appendix \ref{Appendix C} that for any $s \ge 0$, $\Phi_0: H^s_{r,0} \to \frak h^{s+1/2}$ is a diffeomorphism
onto an open proper subset of $\frak h^{s+1/2}$. 

\noindent
(iii) In \cite{Tao}, Tao asks whether the gauge transform $\mathcal G$
is related to the integrability of the Benjamin--Ono equation.  
In view of the formula \eqref{w and Phi_0} for $\Phi_0$, Theorem \ref{Theorem Phi - Phi_L} answers Tao's question 
for the BO equation on $\T$
by proving that (up to scaling) the Fourier transform of $\mathcal G$ is
a high frequency approximation of the Birkhoff map $\Phi$.
\end{remark}
\begin{remark}
In Appendix \ref{approximation dPhi}, we provide high frequency approximations of the differentials of $\Phi$ and of $\Phi^{-1}$.
Such approximations are useful when studying the pullback of vector fields by $\Phi$ or $\Phi^{-1}$.
\end{remark}

As a corollary of Theorem \ref{Theorem Phi - Phi_L}, we obtain smoothing properties of solutions of \eqref{BO}, 
expressed in the coordinates provided by $\Phi$. To this end,  we introduce the following evolution maps:
given any $\alpha \ge \frac 12$ and $t \in \R$, define 
for any initial data $\zeta \in \h^\alpha$, 
\begin{eqnarray*}
\mathcal S_L (t,\zeta) &:=& \big( e^{it(n^2 - 2\Vert \zeta\Vert_{1/2}^2)} \zeta_n \big)_{n \ge 1}  \in  \h^{\alpha} \, , \\
\mathcal S_{L,\ast }(t,\zeta) &:=& \big( e^{it(n^2 - 2\Vert \zeta\Vert_{1/2}^2+\delta_n(\zeta))} \zeta_n \big)_{n \ge 1}  \in  \h^{\alpha} \, ,
\end{eqnarray*} 
where
$$
 \delta_n(\zeta):=2\sum_{k>n}(k-n)|\zeta_k|^2 \,  .
$$
\begin{corollary}\label{smoothing of u by w_L}
 For any $u_0 \in H^{s}_{r,0}$ with $s\geq  0$, there exists $M_s > 0$ so that for any $t \in \R$,
\begin{equation}\label{estimate for Phi(u_0) for s > 1/2}
\|\Phi(\mathcal S(t,u_0)) - \mathcal S_L(t,\Phi_0(u_0)) \|_{s+\frac 12 +\sigma(s)} \le  M_s  \langle t \rangle \, ,  
\end{equation}
\begin{equation}\label{enhanced estimate for Phi(u_0) for s > 1/2}
\|\Phi(\mathcal S(t,u_0)) - \mathcal S_{L,\ast}(t,\Phi_0(u_0)) \|_{s+\frac 12 + \tau (s)} \le  M_s  \, .
\end{equation}
The constant $M_s > 0$ can be chosen uniformly on bounded subsets of initial data in $H^{s}_{r,0}$.
\end{corollary}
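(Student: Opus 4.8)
The corollary is essentially an immediate consequence of Theorem~\ref{Theorem Phi - Phi_L} combined with the explicit evolution formula \eqref{evolution in Birkhoff}. The key observation is that $\Phi$ linearises the BO flow: $\Phi(\mathcal S(t,u_0)) = (e^{it\omega_n(u_0)}\zeta_n(u_0))_{n\ge1}$, while $\mathcal S_L$ and $\mathcal S_{L,\ast}$ are designed to be the analogous linearised flows applied to $\Phi_0(u_0)$ rather than $\Phi(u_0)$. So the whole task reduces to comparing $\Phi$ with $\Phi_0$ at time $t$, and tracking how the frequency discrepancy $\omega_n(u_0)-(n^2-2\|\Phi(u_0)\|_{1/2}^2)$ (resp. $\omega_n(u_0)-(n^2-2\|\Phi(u_0)\|_{1/2}^2+\delta_n(\Phi(u_0)))$) accumulates in $t$.

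First I would record the dictionary between the quantities appearing in $\mathcal S_L,\mathcal S_{L,\ast}$ and the BO spectral data. By \eqref{absolute value Birkhoff coordinates} and the second trace formula in \eqref{trace formulas}, $\|\Phi(u_0)\|_{1/2}^2=\sum_{n\ge1}n|\zeta_n|^2=\sum_{n\ge1}n\gamma_n=\tfrac12\|u_0\|_0^2=\tfrac12\langle u_0^2|1\rangle$, so that $n^2-2\|\Phi(u_0)\|_{1/2}^2=n^2-\langle u_0^2|1\rangle$; similarly $\delta_n(\Phi(u_0))=2\sum_{k>n}(k-n)|\zeta_k|^2=2\sum_{k>n}(k-n)\gamma_k$, whence by \eqref{formula frequencies} the exponent in $\mathcal S_{L,\ast}$ equals exactly $\omega_n(u_0)$. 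Consequently $\mathcal S_{L,\ast}(t,\Phi(u_0))=\Phi(\mathcal S(t,u_0))$ identically, and for the enhanced estimate \eqref{enhanced estimate for Phi(u_0) for s > 1/2} one is left with
$$
\Phi(\mathcal S(t,u_0))-\mathcal S_{L,\ast}(t,\Phi_0(u_0))=\mathcal S_{L,\ast}(t,\Phi(u_0))-\mathcal S_{L,\ast}(t,\Phi_0(u_0)),
$$
and each Fourier coefficient is $e^{it\omega_n(u_0)}\big(\zeta_n(u_0)-[\Phi_0(u_0)]_n\big)$. Since $|e^{it\omega_n}|=1$, the $\h^{s+1/2+\tau(s)}$ norm of this difference is \emph{exactly} $\|\Phi(u_0)-\Phi_0(u_0)\|_{s+1/2+\tau(s)}$, which is finite and bounded on bounded sets of $u_0$ by Theorem~\ref{Theorem Phi - Phi_L}. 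This gives \eqref{enhanced estimate for Phi(u_0) for s > 1/2} with $M_s$ \emph{independent of $t$}, as claimed.

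For \eqref{estimate for Phi(u_0) for s > 1/2} I would split
$$
\Phi(\mathcal S(t,u_0))-\mathcal S_L(t,\Phi_0(u_0))=\big(\mathcal S_{L,\ast}(t,\Phi(u_0))-\mathcal S_{L,\ast}(t,\Phi_0(u_0))\big)+\big(\mathcal S_{L,\ast}(t,\Phi_0(u_0))-\mathcal S_L(t,\Phi_0(u_0))\big).
$$
The first bracket is handled exactly as above, contributing $\|\Phi(u_0)-\Phi_0(u_0)\|_{s+1/2+\tau(s)}$, which is bounded (note $\tau(s)\ge\sigma(s)$, so it embeds into the target space). The $n$th coefficient of the second bracket is $\big(e^{it(\omega_n-(n^2-\langle u_0^2|1\rangle))}-1\big)e^{it(n^2-\langle u_0^2|1\rangle)}[\Phi_0(u_0)]_n$, and using $|e^{i\theta}-1|\le|\theta|$ together with $\omega_n-(n^2-\langle u_0^2|1\rangle)=2\sum_{k>n}(k-n)\gamma_k=O(n^{-2s})$ (by \cite[Proposition 5]{GKT1}, as quoted in the excerpt), one gets that the $n$th term is bounded by $|t|\cdot O(n^{-2s})\cdot|[\Phi_0(u_0)]_n|$. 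Since $\Phi_0(u_0)\in\h^{s+1/2}$, multiplying by a factor $O(n^{-2s})$ lands it in $\h^{s+1/2+2s}\supset\h^{s+1/2+\sigma(s)}$ when $s\le1/2$; for $s>1/2$ one instead uses $\omega_n-(n^2-\langle u_0^2|1\rangle)=2\sum_{k>n}(k-n)\gamma_k$ together with the summability $\sum k\gamma_k<\infty$ to get a bound by $|t|$ times a fixed $\h^{s+1/2+1}$-sequence. Either way the second bracket is $\le M_s\langle t\rangle$ in $\h^{s+1/2+\sigma(s)}$, with $M_s$ uniform on bounded sets. Adding the two contributions yields \eqref{estimate for Phi(u_0) for s > 1/2}.

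**Main obstacle.** There is no deep obstacle here — the content of the corollary is entirely front-loaded into Theorem~\ref{Theorem Phi - Phi_L} and the asymptotics of $\omega_n-(n^2-\langle u_0^2|1\rangle)$. The only point requiring care is the bookkeeping of regularity indices at the threshold $s=1/2$ (where $\sigma,\tau$ carry the $1-$ convention) and the verification that the tail-sum $\sum_{k>n}(k-n)\gamma_k$ really contributes the claimed gain $O(n^{-\min(2s,1)})$ uniformly on bounded subsets; this is where one must invoke \cite[Proposition 5]{GKT1} precisely and check that the $\e$-losses at $s=1/2$ match the definition of $\sigma$ and $\tau$. The uniformity of $M_s$ on bounded sets of initial data follows from the corresponding uniformity statements in Theorem~\ref{Theorem Phi - Phi_L} and in \cite[Proposition 5]{GKT1}.
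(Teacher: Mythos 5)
Your reduction to Theorem \ref{Theorem Phi - Phi_L} is the right starting point, but the key step of your argument is incorrect: you treat the phases of $\mathcal S_{L}$ and $\mathcal S_{L,\ast}$ as if they were independent of the argument $\zeta$. Writing $\Phi(\mathcal S(t,u_0))-\mathcal S_{L,\ast}(t,\Phi_0(u_0))=\mathcal S_{L,\ast}(t,\Phi(u_0))-\mathcal S_{L,\ast}(t,\Phi_0(u_0))$ is fine, but its $n$th coefficient is \emph{not} $e^{it\omega_n}\big(\zeta_n(u_0)-[\Phi_0(u_0)]_n\big)$: it is $e^{it\omega_n}\zeta_n(u_0)-e^{it\tilde\omega_n}[\Phi_0(u_0)]_n$ with $\tilde\omega_n:=n^2-2\|\Phi_0(u_0)\|_{1/2}^2+\delta_n(\Phi_0(u_0))$, and $\tilde\omega_n\neq\omega_n(u_0)$. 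Indeed $2\|\Phi(u_0)\|_{1/2}^2=\la u_0^2\,|\,1\ra$ (trace formulas), whereas $2\|\Phi_0(u_0)\|_{1/2}^2=2\|\mathcal G(u_0)\|_0^2$ by \eqref{w and Phi_0}; these differ in general — for the one-gap potentials $u_\alpha$ of Appendix \ref{Appendix C} one has $\la u_\alpha^2|1\ra=2|\alpha|^2/(1-|\alpha|^2)$ but $2\|\mathcal G(u_\alpha)\|_0^2=2|\alpha|^2$. Hence $\omega_n-\tilde\omega_n$ tends, as $n\to\infty$, to a nonzero constant, and the omitted cross term $\big(e^{it\omega_n}-e^{it\tilde\omega_n}\big)[\Phi_0(u_0)]_n$ decays in $n$ no faster than $[\Phi_0(u_0)]_n$ itself, i.e.\ only at the $\h^{s+1/2}$ rate: neither $|e^{i\theta}-1|\le 2$ nor $|e^{i\theta}-1|\le|\theta|$ extracts any $n$-decay from the constant part of the phase discrepancy. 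So your claim that the $\h^{s+1/2+\tau(s)}$ norm equals exactly $\|\Phi(u_0)-\Phi_0(u_0)\|_{s+1/2+\tau(s)}$, uniformly in $t$, is false, and the same issue propagates into your treatment of \eqref{estimate for Phi(u_0) for s > 1/2}. (A secondary, harmless, slip of the same kind: in your ``second bracket'' the phase difference is $\delta_n(\Phi_0(u_0))$, not $\omega_n-(n^2-\la u_0^2|1\ra)=\delta_n(\Phi(u_0))$; that one is repairable since $\delta_n(\Phi_0(u_0))=O(n^{-\min(2s,1)})$ already follows from $\Phi_0(u_0)\in\h^{s+1/2}$.)

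What your computation does prove, and what the paper's indicated route (``as in Theorems \ref{smoothing 1}, \ref{smoothing 2}'') delivers directly from \eqref{evolution in Birkhoff} and Theorem \ref{Theorem Phi - Phi_L} applied at $u_0$, are the estimates with the phases built from the \emph{true} spectral data: $\Phi(\mathcal S(t,u_0))_n-e^{it\omega_n}[\Phi_0(u_0)]_n=e^{it\omega_n}\big(\Phi(u_0)-\Phi_0(u_0)\big)_n$ is bounded in $\h^{s+1/2+\tau(s)}$ uniformly in $t$, and replacing $\omega_n$ by $n^2-\la u_0^2|1\ra$ costs $|t|\,\delta_n(\Phi(u_0))\,|[\Phi_0(u_0)]_n|\lesssim |t|\,n^{-\min(2s,1)}|[\Phi_0(u_0)]_n|$, which yields the $\la t\ra$ bound at regularity $s+\tfrac12+\sigma(s)$. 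To reach the corollary as literally stated — where both the amplitudes \emph{and} the phases of $\mathcal S_L,\mathcal S_{L,\ast}$ are computed from $\Phi_0(u_0)$ — one must in addition control the substitution of $2\|\Phi_0(u_0)\|_{1/2}^2$ for $\la u_0^2|1\ra$ and of $\delta_n(\Phi_0(u_0))$ for $\delta_n(\Phi(u_0))$ inside the exponentials, multiplied by amplitudes that are merely in $\h^{s+1/2}$. Your proposal does not address this step at all, and because of the non-vanishing constant offset it is not a matter of bookkeeping: it is precisely the missing (and problematic) part of the argument.
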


\subsection{Applications}
In Section \ref{BO on Hoelder}, we apply Theorem \ref{smoothing 1}  to study the action of the Benjamin--Ono flow $\mathcal S(t)$ on  the H\"older spaces $C^\alpha(\T, \R)$. 
In particular, we prove that there exists a subset $N\subset \R $ of Lebesgue measure $0$ so that for any $t\notin N$ and any $1/2 < \alpha < 1$, 
 $\mathcal S(t)$ does not map $\cap_{\e >0}C^{\alpha -\e}(\T, \R)$ into $\cup_{\e >0}C^{\alpha -1/2+\e} (\T, \R)$.
  On the other hand it is easy to check that for any $t\in \R$,  $\mathcal S(t)$ maps $\cap _{\e >0}C^{\alpha -\e}(\T, \R)$ into $\cap_{\e >0}C^{\alpha -1/2-\e}(\T, \R)$. 
  We refer to Section \ref{BO on Hoelder} for additional results.

\subsection{Comments}
(i) Birkhoff maps have been constructed for integrable PDEs such as the KdV equation (\cite{KP1}), the mKdV equation, and the defocusing NLS equation (\cite{GK1}). 
Each of these maps admits a high frequency approximation, similar to the one of the Birkhoff map of the Benjamin-Ono equation, stated in Theorem \ref{Theorem Phi - Phi_L}.
But in contrast to the Benjamin-Ono equation, it is given (up to scaling) by the Fourier transform -- see \cite{KST1} (cf. also \cite{KuPe}), \cite{KST2}, \cite{KST3}.
Hence for these equations, the Birkhoff map can be viewed as a {\em semilinear} perturbation of the Fourier transform. \\
(ii) Smoothing properties, similar to the ones stated in Theorem \ref{smoothing 1} and  Theorem \ref{smoothing 2} for the Benjamin-Ono equation,
have been established previously for solutions of integrable PDEs such as the KdV equation (\cite{ET1}, \cite{ET3}, \cite{KST1}) and the defocusing NLS equation 
(\cite{ET2}, \cite{ET3}, \cite{KST4}). In contrast to \eqref{smoothing 1 of u by w_L B}, \eqref{smoothing 2}, 
these smoothing properties are obtained by approximating
 solutions of these equations by solutions of the Airy equation (in the case of the KdV equation) and by solutions of the linear Schr\"odinger equation (in the case of the defocusing
NLS equation) or by enhanced versions of solutions of these linear equations, involving the KdV and NLS frequencies.

\subsection{Organization of the paper. Notations}
The paper is organized as follows. 
In Section 2 we prove Theorem \ref{Theorem Phi - Phi_L},
which then is used in Section 3 to derive   Theorems \ref{smoothing 1} and \ref{smoothing 2} and Corollary \ref{smoothing of u by w_L}. 
In Section 4, we apply Theorem \ref{smoothing 1} to study the action of the Benjamin--Ono flow on  H\"older spaces. 
In Appendix \ref{Hankel operators}, we record smoothing properties of Hankel operators, which are used throughout the main body of the paper.
In Appendix \ref{Appendix C}, we prove diffeomorphism properties of Tao's gauge transform.
Finally, in Appendix \ref{approximation dPhi}, we derive high frequency approximations of the differential of $\Phi$ and the one of $\Phi^{-1}$.

By and large, we will use the notation established in
\cite{GK}. In particular, 
the $H^s$-norm of an element $v$ in the 
Sobolev space $H^s_c \equiv H^s(\T, \C)$, $s \in \R$,
will be denoted by $\|v\|_s$. It is defined by 
\begin{equation}\label{Hs norm}
\|v\|_s = 
\big( \sum_{n \in \Z} \langle n \rangle^{2s}
|\widehat v(n)|^2 \big)^{1/2}  \, , \qquad  \langle n \rangle = \max\{1, |n|\}  \, .
\end{equation}
For $s=0$, we usually write $\|v\|$ for $\|v\|_0$. 
By $\langle \cdot \, | \, \cdot \rangle$, we will also
denote the extension of the $L^2$-inner product,
introduced in \eqref{L2 inner product}, 
to $ H^{-s}_c\times H^s_c$, $s \in \R$, by duality.
By $H_+ \equiv H^0_+$ we denote the Hardy space, consisting
of elements $f \in L^2(\T, \C) \equiv H^0_c$ with
the property that 
$ \widehat f(n) = 0$ for any $n < 0$.
More generally, for any $s \in \R$,  $H^{s}_+$
denotes the subspace of $H^s_c,$
consisting of elements $f \in H^s_c$ with the property
that $ \widehat f(n) = 0$ for any $n < 0$.
By $\h^{s} \equiv \h^{s} (\N, \C)$ we denote the weighted $\ell^2$-sequence space
$$
\h^{s} := \{ z = (z_n)_{n \ge 1} \subset \C \, : \,  \|z\|_s < \infty \} \, , \qquad
\|z\|_s := ( \sum_{n \ge 1} n^{2s} |z_n|^2 )^{1/2} \, .
$$
For notational convenience, we often write $z_n = \h^{s}_n$
for a sequence $(z_n)_{n \ge 1}$ in $\h^{s}$. The same notation is also used for other sequence spaces
such as $\ell^1 \equiv \ell^1(\N, \R)$.
Finally, for any $a$, $b$ in $\R$, the expression $a \lesssim b$ means that there exists $C > 0$ so that $a \le C b$.

\smallskip
\noindent
{\em Acknowledgement.} We would like to warmly thank Nikolay Tzvetkov for interesting discussions and for sharing with us his (unpublished) work on the smoothing
properties of solutions of the Benjamin-Ono equation, which is at the origin on this paper.


\section{Proof of Theorem \ref{Theorem Phi - Phi_L}}\label{smoothing property for Phi}
In this section  we prove Theorem \ref{Theorem Phi - Phi_L}.
Throughout this section we assume that $s \geq 0$. Recall that $\Phi$ denotes the Birkhoff map,  
$$
\Phi : H^{s}_{r, 0} \to \h^{s + 1/2} , \, u \mapsto (\zeta_n(u))_{n \ge 1} \, , \qquad \zeta_n(u) = \frac{\langle 1 | f_n(\cdot, u) \rangle}{\sqrt{\kappa_n(u)}}\ ,
$$ 
where $f_n \equiv f_n( \cdot, u),$ $n\ge 0$, is the orthonormal basis of eigenfunctions of the Lax operator $L_u$ (cf. \eqref{def L_u}), 
uniquely determined by the normalization conditions \eqref{normalisation f_n},
and $\kappa_n \equiv \kappa_n(u) > 0$ are scaling factors given by \eqref{formula kappa_n}.
To prove that  $\Phi_0(u) =  \big( \sqrt{n} \langle 1 | g_\infty e^{inx}  \rangle \big)_{n \ge 1}$, defined in \eqref{approximation Phi_0}, 
approximates $\Phi$, 
we introduce the auxiliary map
\begin{equation}\label{def Phi_1}
\Phi_1 : H^{0}_{r, 0} \to \h^{1/2} , \, u \mapsto  \big( \sqrt{n} \langle 1 | f_n  \rangle \big)_{n \ge 1}\, .
\end{equation}
Since $\lambda_n \langle 1 | f_n \rangle =  \langle 1 | L_u f_n \rangle =  - \langle u | f_n \rangle $, 
the map $\Phi_1$ can be viewed (up to scaling) as a version of the Fourier transform where the orthonormal basis $e^{inx}$, $n \ge 0$, of the Hardy space $H_+$
is replaced by the basis $f_n$, $n \ge 0$, of eigenfunctions of $L_u$.

In a first step we study the difference  $\Phi(u) - \Phi_1(u)$. Its $n$th component is given by
\begin{equation}\label{Phi - Phi_1}
 \zeta_n(u)  -  \sqrt{n} \langle 1 | f_n  \rangle = 
 \sqrt{n} ( \frac{1}{ \sqrt{ n \kappa_n}} - 1 )  \langle 1 | f_n  \rangle \, .
\end{equation}
We begin by deriving an estimate for $n \kappa_n$.
\begin{lemma}\label{smooting}
For any $u \in H^s_{r, 0}$ with $s \geq 0$, 
\begin{equation}\label{estimates n kappa_n}
n\kappa_n(u) = 1 + O(\frac{1}{n}) \  .
\end{equation}
As a consequence
\begin{equation}\label{smoothing of Phi_1}
 \frac{1}{\sqrt{n \kappa_n(u)}}  = 1 +  O(\frac{1}{n}) \  .
\end{equation}
For any given $s \ge 0$, the estimates for $n \kappa_n(u)$ and  $ \frac{1}{\sqrt{n \kappa_n(u)}} $
hold uniformly on bounded subsets of potentials $u$ in $H^s_{r,0}$.
\end{lemma}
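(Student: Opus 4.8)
The plan is to estimate $n\kappa_n$ directly from the product formula \eqref{formula kappa_n}, using only that the gaps $\gamma_k=\lambda_k-\lambda_{k-1}-1$ are nonnegative \eqref{def gamma_n} and that $\sum_{k\ge1}k\gamma_k<\infty$ \eqref{trace formulas}, \eqref{absolute value Birkhoff coordinates}. I would first record the elementary consequences used throughout: since $\lambda_k-\lambda_{k-1}=1+\gamma_k\ge1$,
$$
\lambda_n-\lambda_p=(n-p)+\sum_{k=p+1}^{n}\gamma_k\ \ge\ n-p\quad(0\le p\le n)\,,\qquad \lambda_n-\lambda_0=n+\sum_{k=1}^n\gamma_k\,;
$$
and since $\gamma_k\ge0$,
$$
\sum_{k>m}\gamma_k\ \le\ \frac{1}{m+1}\sum_{k>m}k\gamma_k\ \lesssim\ \frac{1}{m+1}\quad(m\ge0)\,,\qquad \sum_{k\ge1}\gamma_k\ \lesssim\ 1\,.
$$
All implicit constants below will depend only on $\|u\|_0$, hence be uniform on bounded subsets of $H^s_{r,0}$ for any $s\ge0$, such subsets being bounded in $H^0_{r,0}$.

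The key algebraic observation is that $\gamma_p=\lambda_p-\lambda_{p-1}-1$ gives $\lambda_p-\gamma_p=\lambda_{p-1}+1$, hence
$$
1-\frac{\gamma_p}{\lambda_p-\lambda_n}\ =\ \frac{\lambda_{p-1}+1-\lambda_n}{\lambda_p-\lambda_n}\,,
$$
which I would use to treat the two halves $p<n$ and $p>n$ of the product in \eqref{formula kappa_n} separately. For $1\le p<n$ one has $\lambda_p-\lambda_n=-(\lambda_n-\lambda_p)<0$, so the factor equals $1+\eta_p$ with $0\le\eta_p:=\tfrac{\gamma_p}{\lambda_n-\lambda_p}\le\tfrac{\gamma_p}{n-p}$. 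For $p>n$, substituting $q=p-1$ and writing $\lambda_{q+1}-\lambda_n=(\lambda_q+1-\lambda_n)+\gamma_{q+1}$, a short computation rewrites each factor as $(1+\e_q)^{-1}$ with $\e_q:=\tfrac{\gamma_{q+1}}{\lambda_q-\lambda_n+1}$ and $0\le\e_q\le\tfrac{\gamma_{q+1}}{q-n+1}$, since $\lambda_q-\lambda_n+1\ge q-n+1\ge1$ for $q\ge n$. All factors are positive (consistent with $\kappa_n>0$) and
$$
n\kappa_n\ =\ \frac{n}{\lambda_n-\lambda_0}\cdot\prod_{p=1}^{n-1}(1+\eta_p)\cdot\prod_{q\ge n}\frac{1}{1+\e_q}\,.
$$

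The heart of the proof — the step I expect to be the main obstacle — is to show $\sum_{p=1}^{n-1}\eta_p=O(1/n)$ and $\sum_{q\ge n}\e_q=O(1/n)$. The crude bound $\sum_{p<n}\tfrac{\gamma_p}{n-p}=\sum_{j=1}^{n-1}\tfrac{\gamma_{n-j}}{j}$ only yields $O(n^{-1}\log n)$, so the idea is to split the range of $j$ at $j\sim n/2$: for $1\le j\le n/2$ use $\tfrac1j\le1$ together with $\sum_{k\ge n/2}\gamma_k\lesssim1/n$, and for $n/2<j\le n-1$ use $\tfrac1j<\tfrac2n$ together with $\sum_{k\ge1}\gamma_k\lesssim1$; this gives $\sum_{p<n}\eta_p\lesssim1/n$. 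The same dyadic splitting applied to $\sum_{q\ge n}\tfrac{\gamma_{q+1}}{q-n+1}=\sum_{j\ge1}\tfrac{\gamma_{n+j}}{j}$ — for $1\le j\le n$ use $\tfrac1j\le1$ and $\sum_{k>n}\gamma_k\lesssim1/n$, for $j>n$ use $\tfrac1j<\tfrac1n$ and $\sum_{k>2n}\gamma_k\lesssim1/n$ — gives $\sum_{q\ge n}\e_q\lesssim1/n$ (confirming in passing that the product over $q\ge n$ converges). Since $\e_q,\eta_p\to0$ and $0\le\log(1+x)\le x$, both $\log\prod_{p<n}(1+\eta_p)$ and $-\log\prod_{q\ge n}(1+\e_q)^{-1}$ are nonnegative and $O(1/n)$; combined with $\tfrac{n}{\lambda_n-\lambda_0}=\bigl(1+\tfrac1n\sum_{k=1}^n\gamma_k\bigr)^{-1}=1+O(1/n)$ this yields $\bigl|\log(n\kappa_n)\bigr|=O(1/n)$, which is \eqref{estimates n kappa_n} after exponentiating. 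The same inequalities also give a \emph{uniform} lower bound $n\kappa_n\ge c>0$ (here $\tfrac{n}{\lambda_n-\lambda_0}$ is bounded below because $\sum_{k=1}^n\gamma_k\lesssim1$, while $\prod_{p<n}(1+\eta_p)\ge1$ and $\prod_{q\ge n}(1+\e_q)^{-1}\ge e^{-\sum_q\e_q}$ with $\sum_q\e_q$ bounded uniformly), which takes care of the finitely many small $n$ in the exponentiation step. Finally \eqref{smoothing of Phi_1} is immediate, since $x\mapsto x^{-1/2}$ is Lipschitz on $[c,\infty)$, so $\tfrac1{\sqrt{n\kappa_n}}-1=O\bigl(n\kappa_n-1\bigr)=O(1/n)$ with constants again depending only on $\|u\|_0$.
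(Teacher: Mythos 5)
Your proof is correct and follows essentially the same route as the paper: the same product formula \eqref{formula kappa_n}, the bound $|\lambda_p-\lambda_n|\ge |p-n|$, the splitting of $\sum_{p\ne n}\gamma_p/|\lambda_p-\lambda_n|$ at $|p-n|\sim n/2$ combined with the trace formulas, and the observation $\lambda_p-\gamma_p=\lambda_{p-1}+1$ for the uniform lower bound. The only difference is bookkeeping: you exponentiate logarithms of the exact positive factorization into factors $(1+\eta_p)$ and $(1+\varepsilon_q)^{-1}$, which handles the estimate and the lower bound in one pass, while the paper writes $n\kappa_n-1=I_n+II_n$, bounds $II_n$ via the telescoping product identity, and derives the lower bound separately.
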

\begin{proof}
In view of \eqref{formula kappa_n} we write $n\kappa_n - 1= I_n + II_n$ where
$$
I_n = ( \frac{n}{\lambda_n - \lambda_0} -1 ) \prod_{p \ne n} (1 - \frac{\gamma_p}{\lambda_p - \lambda_n}) \, , \qquad
II_n = \prod_{p \ne n} (1 - \frac{\gamma_p}{\lambda_p - \lambda_n}) - 1 \, .
$$
Let us first estimate $I_n$. Since for any $m \ge 0$, $\lambda_m \equiv \lambda_m(u)$ satisfies 
$\lambda_m = m - \sum_{k \ge m+1} \gamma_k$
(cf. \eqref{trace formulas})
one has 
\begin{equation}\label{formula  2 for eigenvalues}
\lambda_n - \lambda_0 = n +  \sum_{k =1}^n \gamma_k \, , \qquad  \forall \, n \ge 1 ,
\end{equation}
and in turn
$$
 \frac{n}{\lambda_n - \lambda_0} -1  = - \frac{1}{n} \,  \frac{\sum_{k =1}^n \gamma_k}{1 + \frac{1}{n}\sum_{k =1}^n \gamma_k} \, .
$$
The product $\prod_{p \ne n} (1 - \frac{\gamma_p}{\lambda_p - \lambda_n}) $ can be estimated as follows. 
Taking into account that
$$
| \prod_{p \ne n} (1 - \frac{\gamma_p}{\lambda_p - \lambda_n})  |  \le
\prod_{p \ne n} (1 + \frac{\gamma_p}{|\lambda_p - \lambda_n |}) =
\exp \big( \sum_{p \ne n} \log (1 + \frac{\gamma_p}{|\lambda_p - \lambda_n |}) \big)
$$
and that $0 \le \log (1 + a) \le a$ for any $a \ge 0$ one sees that
$$
| \prod_{p \ne n} (1 - \frac{\gamma_p}{\lambda_p - \lambda_n})  |  \le
\exp \big(  \sum_{p \ne n} \frac{\gamma_p}{|\lambda_p - \lambda_n |} \big)
\le \exp \big(  \sum_{p \ne n}  \gamma_p  \big),
$$
where for the latter inequality we used that $|\lambda_p - \lambda_n | \ge 1$ for any $p \ne n$.
By \eqref{trace formulas} it then follows that
\begin{equation}\label{estimate I_n}
I_n = O\big(\frac{1}{n}\big) \, .
\end{equation}
Next let us consider $II_n$. We start from the following identity,
\begin{equation}\label{idprod}
1-\prod_{p=1}^N (1-a_p)=\sum_{p=1}^N a_p\prod_{1\leq q<p}(1-a_q)\ ,
 \end{equation}
 which can be easily checked by induction on $N$
 for any given sequence of real (or complex) numbers $a_n$. 
 Note that \eqref{idprod} continues to hold for $N=\infty $ if the series of $a_p$ is absolutely convergent.
 Since $| 1 - a_p| \le 1 + |a_p| \le \exp(|a_p|)$ for any $p \ge 1$, one is led to the estimate
 \begin{equation}\label{estprod}
 \big| 1-\prod_{p=1}^\infty (1-a_p)\big| \leq \big(  \sum_{p=1}^\infty |a_p|     \big)\exp \big(  \sum_{p=1}^\infty |a_p|     \big) \,  .
 \end{equation}
 Using again that $|\lambda_p-\lambda_n|\geq 1$, it then follows that
 $$
 | II_n |\leq \big(\sum_{1\leq p\ne n}\frac{\gamma_p}{|\lambda_p-\lambda_n|}\big)\exp \big(\sum_{q=1}^\infty \gamma_q\big) \, .
 $$
 Since by \eqref{trace formulas} for any $p>n$,
 $$
 \lambda_p-\lambda_n =p-n+\sum_{n<k\leq p}\gamma_k\geq p-n \,  ,
 $$
 one infers that for any $p$, $|\lambda_p-\lambda_n|\geq |p-n|$. Hence
 $$
 \sum_{|p -n| \leq \frac n2}\frac{\gamma_p}{|\lambda_p-\lambda_n|}\leq \frac 2n \sum_{p}\gamma_p 
 $$
 and 
 $$
 \sum_{0 < |p - n| < \frac n2}\frac{\gamma_p}{|\lambda_p-\lambda_n|} \leq   \sum_{0 < |p - n| < \frac n2} \gamma_p
\le  \frac{2^{1+2s}}{n^{1+2s}} \sum_{p\ge 1}p^{1+2s}\gamma_p \,  .
 $$
 By \eqref{absolute value Birkhoff coordinates} and \cite[Proposition 5]{GKT1} it then follows that
\begin{equation}\label{estimate II_n}
II_n = O\big(\frac{1}{n}\big) \, , 
\end{equation}
which together with estimate \eqref{estimate I_n}  yields \eqref{estimates n kappa_n}.

By  \eqref{absolute value Birkhoff coordinates} and \cite[Proposition 5]{GKT1},
the estimate for $n \kappa_n(u)$  hold uniformly on bounded subsets of potentials in $H^s_{r,0}$ with $s \ge 0$.

To see that the estimate for $ \frac{1}{\sqrt{n \kappa_n(u)}} $ also holds uniformly on bounded subsets of potentials in $H^s_{r,0}$, 
it remains to find a uniform positive lower bound for $n \kappa_n$, $n \ge 1$, on such subsets. 
To this end note that by \eqref{formula kappa_n} and \eqref{formula 2 for eigenvalues}
$$
n \kappa_n  = \frac{1}{1 +\frac{1}{n} \sum_{k =1}^n \gamma_k}  \prod_{1 \le p < n} (1 + \frac{\gamma_p}{\lambda_n - \lambda_n})  \cdot
\prod_{p > n} (1 -  \frac{\gamma_p}{\lambda_p - \lambda_n} ) \, ,
$$
yielding, when combined with \eqref{trace formulas},
$$
n \kappa_n  \ge  \frac{1}{1 + | \lambda_0 | } \exp \big(- \sum_{p > n} -\log (1 - \frac{\gamma_p}{\lambda_p - \lambda_n}) \big) \, .
$$
Applying the estimate 
$$
-\log(1 - a) = \int_0^a \frac{1}{1-x} d x \le \frac{a}{1-a} \, , \qquad  \forall \, 0 < a < 1 \, ,
$$ 
to $a = \frac{\gamma_p}{\lambda_p - \lambda_n} $ and using that for any $p > n$,
$$
 \lambda_p-\lambda_n - \gamma_p = p - n + \sum_{n<k <  p}\gamma_k \ge 1 \, ,
$$
one concludes that for any $p > n$
$$
\frac{a}{1-a} =  \frac{\gamma_p}{\lambda_p - \lambda_n - \gamma_p} \le \gamma_p 
$$
and hence we obtain the following positive lower bound for $n \kappa_n$, $n \ge 1$,
$$
n \kappa_n  \ge  \frac{1}{1 + | \lambda_0 | } \exp \big(- \sum_{p > n} \gamma_p \big)      \ge   \frac{1}{1 + | \lambda_0 | } e^{ - |\lambda_0| } \, , 
\qquad \forall \, n \ge 1 \, .
$$
By \eqref{trace formulas}
the latter lower bound is uniformly bounded away from $0$ on bounded subsets of potentials $u$ in $H^0_{r,0}$.
\end{proof}
Combining \eqref{Phi - Phi_1} and \eqref{smoothing of Phi_1} then leads to the following
\begin{corollary}\label{estimate Phi - Phi1} 
For any $s \ge 0$, the difference $\Phi - \Phi_1$ is one-smoothing, meaning that it can be viewed as a continuous map
from $H^s_{r, 0}$ with values in $\h^{s+3/2}$. 
\end{corollary}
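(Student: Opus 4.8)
\textbf{Plan of proof for Corollary \ref{estimate Phi - Phi1}.}
The statement asserts that $\Phi - \Phi_1$, whose $n$th component is recorded in \eqref{Phi - Phi_1} as $\sqrt n\,\big(\tfrac{1}{\sqrt{n\kappa_n}}-1\big)\langle 1|f_n\rangle$, maps $H^s_{r,0}$ continuously into $\h^{s+3/2}$. The natural strategy is to combine the pointwise gain of one factor of $1/n$ from Lemma \ref{smooting} with the fact that $\Phi_1(u)=(\sqrt n\,\langle 1|f_n\rangle)_{n\ge1}$ already sits in $\h^{s+1/2}$. First I would note that, by \eqref{smoothing of Phi_1}, there is a constant $C$ (uniform on bounded subsets of $H^s_{r,0}$) with $\big|\tfrac{1}{\sqrt{n\kappa_n}}-1\big|\le C/n$ for all $n\ge1$. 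Hence the $n$th component of $\Phi(u)-\Phi_1(u)$ is bounded in absolute value by $\tfrac{C}{n}\cdot\sqrt n\,|\langle 1|f_n\rangle|=\tfrac{C}{\sqrt n}\,|\langle 1|f_n\rangle|\cdot\tfrac1n\cdot n$; more cleanly, $|(\Phi-\Phi_1)(u)_n|\le \tfrac{C}{n}\,|\Phi_1(u)_n|$.

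Then I would compute the $\h^{s+3/2}$-norm directly:
\begin{equation*}
\|\Phi(u)-\Phi_1(u)\|_{s+3/2}^2=\sum_{n\ge1}n^{2s+3}\,\big|(\Phi-\Phi_1)(u)_n\big|^2\le C^2\sum_{n\ge1}n^{2s+3}\cdot\frac{1}{n^2}\,|\Phi_1(u)_n|^2=C^2\sum_{n\ge1}n^{2s+1}|\Phi_1(u)_n|^2,
\end{equation*}
which is exactly $C^2\|\Phi_1(u)\|_{s+1/2}^2$. Since $\Phi_1(u)=\Phi_0(u)+(\Phi_1-\Phi_0)(u)$ and $\Phi$ maps $H^s_{r,0}$ to $\h^{s+1/2}$ (cf. \eqref{restrictions of Phi}), it suffices to know $\Phi_1(u)\in\h^{s+1/2}$; this follows because $\Phi_1(u)_n=\sqrt n\,\langle 1|f_n\rangle=-\tfrac{1}{\sqrt n}\langle u|f_n\rangle/\lambda_n\cdot\sqrt n\cdot\lambda_n$ — more directly, $\Phi_1(u)=\sqrt{n\kappa_n}\,\Phi(u)_n$ with $\sqrt{n\kappa_n}$ bounded above and below (Lemma \ref{smooting}), so $\|\Phi_1(u)\|_{s+1/2}\lesssim\|\Phi(u)\|_{s+1/2}<\infty$, with the implicit constant uniform on bounded sets. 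This gives the desired bound $\|\Phi(u)-\Phi_1(u)\|_{s+3/2}\lesssim\|\Phi(u)\|_{s+1/2}$, uniformly on bounded subsets of $H^s_{r,0}$.

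For continuity, I would argue that $\Phi:H^s_{r,0}\to\h^{s+1/2}$ is continuous (indeed real analytic, by \eqref{restrictions of Phi}), and that $u\mapsto(n\kappa_n(u))_{n\ge1}$ depends continuously on $u$ in a way compatible with the weights — concretely, the map $u\mapsto\big(\tfrac{1}{\sqrt{n\kappa_n(u)}}-1\big)_{n\ge1}$ is continuous from $H^s_{r,0}$ into $\h^{1}\cap\ell^\infty$ with the uniform $O(1/n)$ control from Lemma \ref{smooting}, so the product $\big(\tfrac{1}{\sqrt{n\kappa_n}}-1\big)_n\cdot(\Phi_1(u)_n)_n$ is continuous into $\h^{s+3/2}$ by a standard argument splitting a tail sum (uniformly small by the $O(1/n)$ estimate applied to a bounded neighborhood) from a finite head (continuous by continuity of each $\kappa_n$ and of $\Phi$). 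The only genuinely delicate point is this last continuity claim: one must make sure the $O(1/n)$ bound in Lemma \ref{smooting} is not merely pointwise but uniform on a neighborhood of any fixed $u$, which is precisely the content of the ``uniform on bounded subsets'' clause of that lemma, so no new work is needed. I therefore expect the estimate itself to be routine once Lemma \ref{smooting} is in hand, and the continuity to be the only step requiring a (short) $\varepsilon$-splitting argument.
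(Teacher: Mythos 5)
Your argument is correct and follows essentially the same route as the paper, which simply combines the identity \eqref{Phi - Phi_1} with the uniform $O(1/n)$ estimate of Lemma \ref{smooting} and the mapping property \eqref{restrictions of Phi} of $\Phi$ (writing $(\Phi-\Phi_1)(u)_n=(1-\sqrt{n\kappa_n})\,\zeta_n(u)$). The only slight imprecision is in your continuity step: the uniform smallness of the tail $\sum_{n>N}n^{2s+1}|\Phi_1(u)_n|^2$ near a fixed $u_*$ does not come from the $O(1/n)$ bound alone but from the convergence $\Phi(u)\to\Phi(u_*)$ in $\h^{s+1/2}$ (equivalently, dominated convergence against the fixed sequence $\big(n^{s+1/2}\zeta_n(u_*)\big)_{n\ge1}$), which is the standard fix and does not affect the result.
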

\begin{proof}
Going through the arguments of the proof of Lemma \ref{smooting} one sees that  $\Phi - \Phi_1 : H^s_{r, 0} \to \h^{s+3/2}$ is continuous
for any $s \ge 0$. \\
\end{proof}

Next we investigate $\Phi_1 - \Phi_0$. To this end we first derive asymptotic estimates for the scaling factors $\mu_n$, introduce in \cite[Section 4]{GK}. 
Recall that for any $n \ge 1$,
$0 < \mu_n \le 1$ is given by 
\begin{equation}\label{def mu_k}
\mu_n = \langle f_n | e^{ix}f_{n-1} \rangle^2
\end{equation} 
and admits the following infinite product representation (cf. \cite[(4.9)]{GK})
$$
\mu_n = (1 - \frac{\gamma_n}{\lambda_n - \lambda_0}) \prod_{p \ne n} (1 - b_{np})\, , \qquad
b_{np} = \gamma_n \frac{\gamma_p}{(\lambda_p - \lambda_n)(\lambda_{p-1} - \lambda_{n-1})} \, .
$$
\begin{lemma}\label{estimate mu_n}
 For any $u \in H^s_{r,0}$ with $s \geq 0$,
$$
0 \le 1 - \sqrt{\mu_n } \le  1 - \mu_n = \ell_n^{1, 2 + 2s }  \,  ,
$$
meaning that $(1 - \mu_n)_{n \ge 1} \in  \ell^{1, 2 + 2s}(\N, \R)$ (cf. 'Notation' in Section \ref{introduction}).
As a consequence
$$
0 \le \sqrt{ 1 - \sqrt{\mu_n }} \le  \sqrt{ 1 - \mu_n } =  \h_n^{1 + s }  \,  .
$$
For any given $s \ge 0$, these estimates hold uniformly on bounded subsets of potentials $u$ in $H^s_{r,0}$, $s \ge 0$.
\end{lemma}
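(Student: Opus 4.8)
The strategy is to estimate $1-\mu_n$ directly from its infinite product representation, exploiting the factorisation $\mu_n = (1-\frac{\gamma_n}{\lambda_n-\lambda_0})\prod_{p\ne n}(1-b_{np})$ and the fact that all factors lie in $(0,1]$, so that $0\le 1-\mu_n$ is automatic and we only need an upper bound. Using the elementary inequality \eqref{estprod} (or rather its consequence $1-\prod(1-a_p)\le(\sum a_p)\exp(\sum a_p)$) applied to the collection consisting of $\frac{\gamma_n}{\lambda_n-\lambda_0}$ together with the $b_{np}$, $p\ne n$, the task reduces to showing that
$$
\frac{\gamma_n}{\lambda_n-\lambda_0}+\sum_{p\ne n} b_{np} = \ell_n^{1,2+2s}\,,
$$
with the $\ell^{1,2+2s}$-bound uniform on bounded subsets of $H^s_{r,0}$, while the exponential factor $\exp\big(\sum_q\gamma_q\big)$ is harmless since $\sum_q\gamma_q=|\lambda_0|\le\tfrac12\|u\|_0^2$ is uniformly bounded by \eqref{trace formulas}.

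Next I would treat the two pieces. For the first term, $\lambda_n-\lambda_0 = n+\sum_{k=1}^n\gamma_k\ge n$ by \eqref{formula 2 for eigenvalues}, so $\frac{\gamma_n}{\lambda_n-\lambda_0}\le\gamma_n/n$; since $\gamma_n=|\zeta_n|^2$ by \eqref{absolute value Birkhoff coordinates} and $(\zeta_n)\in\h^{s+1/2}$, we get $\big(n^{2+2s}\cdot\gamma_n/n\big)_{n}=\big(n^{1+2s}\gamma_n\big)_n\in\ell^1$, with uniform bound via \cite[Proposition 5]{GKT1} (equivalently, the real-analyticity of $\Phi$ on $H^s_{r,0}$). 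For the double sum, I use $b_{np}=\gamma_n\gamma_p/\big((\lambda_p-\lambda_n)(\lambda_{p-1}-\lambda_{n-1})\big)$ together with the separation estimates already established in the proof of Lemma \ref{smooting}: $|\lambda_p-\lambda_n|\ge\max\{1,|p-n|\}$ and likewise $|\lambda_{p-1}-\lambda_{n-1}|\ge\max\{1,|p-n|\}$. Hence $b_{np}\le\gamma_n\gamma_p/\langle p-n\rangle^2$, and
$$
\sum_{p\ne n}b_{np}\le\gamma_n\sum_{p\ne n}\frac{\gamma_p}{\langle p-n\rangle^2}\,.
$$
Splitting the inner sum at $|p-n|=n/2$ as in Lemma \ref{smooting}: for $|p-n|\ge n/2$ one bounds $\langle p-n\rangle^{-2}\le 4/n^2$ and uses $\sum_p\gamma_p\le|\lambda_0|$; for $0<|p-n|<n/2$ one has $p\ge n/2$, so $\gamma_p\le(2/n)^{1+2s}p^{1+2s}\gamma_p$ and $\sum_{0<|p-n|<n/2}\gamma_p/\langle p-n\rangle^2\le(2/n)^{1+2s}\sum_p p^{1+2s}\gamma_p\cdot\sum_{m\ge1}m^{-2}$. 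Either way $\sum_{p\ne n}b_{np}\le C\gamma_n/n^{1+2s}$ (with $C$ uniform), whence $\big(n^{2+2s}\gamma_n/n^{1+2s}\big)_n=(n\gamma_n)_n\in\ell^1$ by \eqref{trace formulas} — actually better than needed — so this piece is also $\ell_n^{1,2+2s}$ with uniform bound.

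Combining, $1-\mu_n\le C\big(n^{1+2s}\gamma_n+n^{1+2s}\gamma_n^2+\dots\big)$, giving $(1-\mu_n)\in\ell^{1,2+2s}$ uniformly on bounded sets. The inequality $1-\sqrt{\mu_n}\le 1-\mu_n$ follows from $0<\mu_n\le1$ (since $1-\sqrt{\mu_n}=(1-\mu_n)/(1+\sqrt{\mu_n})\le 1-\mu_n$), which gives the first displayed chain. For the last assertion, $\sqrt{1-\sqrt{\mu_n}}\le\sqrt{1-\mu_n}$ and $\sum_n n^{2+2s}(1-\mu_n)<\infty$ forces $\sum_n n^{2+2s}(1-\mu_n)$ finite, hence $\big(\sqrt{1-\mu_n}\big)_n\in\h^{1+s}$ since $n^{2(1+s)}(1-\mu_n)=n^{2+2s}(1-\mu_n)$ is summable — all bounds uniform on bounded subsets of $H^s_{r,0}$. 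The main obstacle is purely bookkeeping: getting the weight exponent exactly $2+2s$ (rather than a lossy $1+2s$) out of the product bound requires using the extra factor of $\gamma_n$ (respectively $\gamma_p$ and the $\gamma_n$ prefactor in $b_{np}$) rather than throwing it away, i.e. noticing that both terms carry a genuine quadratic gain in the $\gamma$'s beyond what $\kappa_n$ alone provided; once that is observed, everything reduces to the summability already supplied by \cite[Proposition 5]{GKT1}.
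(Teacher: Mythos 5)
Your proposal is correct and follows essentially the same route as the paper's proof: apply the product estimate \eqref{estprod} to the infinite product representation of $\mu_n$, bound the resulting sum by $\frac{\gamma_n}{n}+\gamma_n\sum_{p\ne n}\frac{\gamma_p}{(p-n)^2}$ using $|\lambda_p-\lambda_n|\ge|p-n|$, split the inner sum at $|p-n|\sim n/2$, and invoke $(\gamma_n)_n\in\ell^{1,1+2s}$ via \eqref{absolute value Birkhoff coordinates} and \cite[Proposition 5]{GKT1}. The only blemishes are cosmetic (a factor $2$ in $\sum_q\gamma_q=|\lambda_0|/2$, and the far-region bound is $C\gamma_n/n^{2}$ rather than $C\gamma_n/n^{1+2s}$ when $s>1/2$), neither of which affects the conclusion.
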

\begin{proof} Using \eqref{estprod} and $|\lambda_p-\lambda_n|\geq |p-n|$, we have
$$
0\leq 1-\mu_n\leq S_n\exp(S_n) \, ,  \qquad
 S_n:=\frac{\gamma_n}{n} + \gamma_n\sum_{p \ne n} \frac{\gamma_p}{(p-n)^2}  \,  .
$$
Since by \eqref{absolute value Birkhoff coordinates} and \cite[Proposition 5]{GKT1},
$(\gamma_n)_{n \ge 1} \in \ell^{1, 1 + 2s}(\N, \R)$ and hence $(\frac{\gamma_n}{n})_{n \ge 1} \in \ell^{1, 2 + 2s}(\N, \R)$
it follows that
$$
\gamma_n\sum_{| p - n | > n/2} \frac{\gamma_p}{(p-n)^2} \le \frac{4 \gamma_n }{n^2} \sum_{p \ge 1} \gamma_p  =  \ell^{1, 3+ 2s}_n
$$
and 
$$
\begin{aligned}
 \gamma_n\sum_{0 < |p - n| \le n/2} \frac{\gamma_p}{(p-n)^2}  
& \le \gamma_n\sum_{0 < |p - n| \le n/2} \gamma_p \\
& \le \frac{ 2^{1+2s}\gamma_n }{n^{1 + 2s}} \sum_{p \ge 1} p^{1 + 2s}\gamma_p  =  \ell^{1, 2+ 4s}_n
\end{aligned} \, .
$$
 By \eqref{absolute value Birkhoff coordinates} and \cite[Proposition 5]{GKT1}, the stated estimates hold uniformly 
on bounded subsets of potentials $u$ in $H^s_{r,0}$ with $s \ge 0$.
\end{proof}

To estimate $\Phi_1 - \Phi_0$, introduce 
$\Xi :  H^s_{r,0} \to  \h^{s} , \, u \mapsto  (\Xi_n(u))_{n \ge 1}$ where
\begin{equation}\label{def Xi}
\Xi_n(u) :=  \sqrt{n }(\Phi_1(u) - \Phi_0(u))_n  = n  \langle 1 | f_n  \rangle -   n \langle 1 | g_\infty e^{inx}  \rangle  \, ,
\ \ \forall \, n \ge 1 \, .
\end{equation}
We recall that the exponent $\tau (s)$, $s > 0$, of the gain of regularity has been introduced in \eqref{tau}.
\begin{lemma}\label{estimate Xi}
Let $s \ge 0$. Then for any $u \in H^s_{r, 0}$,
$$
( \Xi_n(u))_{n \ge 1} \in  \h^{s + \tau (s)}  
$$
and  $( \Xi_n(u))_{n \ge 1}$ is uniformly bounded on bounded subsets of potentials $u$ in $H^s_{r,0}$.
Furthermore, there exists $u\in H^{1/2}_{r,0}$ with the property that $(\Xi_n(u))_{n \ge 1} \notin  \h^{1/2+1}$
and for $0 < s <1/2$, there exists 
$u\in H^s_{r,0}$ so that for any $\e >0$, $(\Xi_n(u))_{n \ge 1} \notin  \h^{s+\tau(s)+\e }$. 
\end{lemma}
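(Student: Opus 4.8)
The plan is to estimate the sequence $\big(\Xi_n(u)\big)_{n\ge 1}$ componentwise, in three steps (the last splitting further).

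\emph{Step 1: algebraic reduction.} Since $\lambda_n\langle 1|f_n\rangle=\langle 1|L_uf_n\rangle=-\langle u|f_n\rangle$ (as recorded above) and $n\langle 1|g_\infty e^{inx}\rangle=-\langle u\mid g_\infty e^{inx}\rangle$ by \eqref{formula 1}, one has $\Xi_n=-\langle u\mid f_n-g_\infty e^{inx}\rangle+(n-\lambda_n)\langle 1|f_n\rangle$. The last summand is of strictly lower order: by the trace formulas \eqref{trace formulas} one has $n-\lambda_n=2\sum_{k>n}\gamma_k$; by \eqref{absolute value Birkhoff coordinates}, \eqref{formula kappa_n} and Lemma \ref{smooting}, $|\langle 1|f_n\rangle|^2=\kappa_n\gamma_n\lesssim\gamma_n/n$; and since $(\gamma_k)_{k\ge1}\in\ell^{1,1+2s}$ by \eqref{absolute value Birkhoff coordinates} and \cite[Proposition 5]{GKT1}, the sequence $\big((n-\lambda_n)\langle 1|f_n\rangle\big)_{n\ge1}$ lies in $\h^{s+1}$, hence in $\h^{s+\tau(s)}$, uniformly on bounded subsets of $H^s_{r,0}$.

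\emph{Step 2: an approximate eigenfunction.} Set $\phi_n:=\Pi(g_\infty e^{inx})$; since $\partial_x^{-1}u\in H^{s+1}$ and $s+1>1/2$ we have $g_\infty\in H^{s+1}$, so $\phi_n\in H^{s+1}_+$. From $\tfrac1i\partial_xg_\infty=ug_\infty$ one gets $\tfrac1i\partial_x(g_\infty e^{inx})=(n+u)g_\infty e^{inx}$, and applying $\Pi$ (which commutes with $\partial_x$) together with $T_u=\Pi\circ u$ yields $(L_u-n)\phi_n=\Pi\big(u\,(\mathrm{Id}-\Pi)(g_\infty e^{inx})\big)$. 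Expanding $\phi_n=\sum_{m\ge0}\langle\phi_n|f_m\rangle f_m$ and using that, by \eqref{trace formulas}, $|\lambda_m-n|\ge c\,\langle m-n\rangle$ for every $m\ne n$ once $n\ge N_0$ (with $c>0$ uniform on bounded subsets), one finds $\langle\phi_n|f_m\rangle=\langle u(\mathrm{Id}-\Pi)(g_\infty e^{inx})\mid f_m\rangle/(\lambda_m-n)$ for $m\ne n$, hence a bound for $\tilde\phi_n:=\phi_n-\langle\phi_n|f_n\rangle f_n$ in terms of $u(\mathrm{Id}-\Pi)(g_\infty e^{inx})$; together with $\|\phi_n\|^2=1-\|(\mathrm{Id}-\Pi)(g_\infty e^{inx})\|^2$, this also gives $\delta_n:=1-|\langle\phi_n|f_n\rangle|^2\to0$.

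\emph{Step 3: Hankel smoothing and the leading coefficient.} Write $\rho_n:=\langle\phi_n|f_n\rangle$. Using $g_\infty e^{inx}-\rho_nf_n=\tilde\phi_n+(\mathrm{Id}-\Pi)(g_\infty e^{inx})$, Step 1 becomes
\[
\Xi_n=-\,\overline{(1-\rho_n)}\,\langle u|f_n\rangle+\langle u\mid g_\infty e^{inx}-\rho_nf_n\rangle+(n-\lambda_n)\langle 1|f_n\rangle .
\]
Expanded in Fourier coefficients --- inserting the resolvent identity of Step 2 for the $\tilde\phi_n$-part --- the middle summand is assembled from Hankel bilinear expressions in $(\widehat u(\pm\ell))_{\ell\ge1}\in\h^s$ and $(\widehat{g_\infty}(\pm k))_{k\ge1}\in\h^{s+1}$, so by the Hankel-operator smoothing estimates of Appendix \ref{Hankel operators}, used at their sharp (Hilbert--Schmidt) endpoint, it defines a sequence in $\h^{s+\tau(s)}$, uniformly on bounded subsets. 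For the first summand one uses $\langle u|f_n\rangle=-\lambda_n\langle 1|f_n\rangle$ and $|\langle 1|f_n\rangle|\lesssim\sqrt{\gamma_n/n}$ together with a quantitative rate for $1-\rho_n$; that rate comes from the recursion $\phi_n=e^{ix}\phi_{n-1}+\widehat{g_\infty}(-n)$ and the companion relation $L_u(e^{ix}f_{n-1})=(\lambda_{n-1}+1)e^{ix}f_{n-1}-\widehat{uf_{n-1}}(-1)$, which yield $\rho_n=\sqrt{\mu_n}\,\rho_{n-1}+\varepsilon_n$ with $(\varepsilon_n)\in\ell^1$; since $1-\sqrt{\mu_n}\le1-\mu_n=\ell^{1,2+2s}_n$ by Lemma \ref{estimate mu_n} and $|\rho_n|\to1$ by Step 2, $\rho_n$ converges, and its limit is $1$ --- passing to the limit in $\tfrac1i\partial_x(e^{-inx}f_n)=(\lambda_n-n)e^{-inx}f_n+\Pi_{\ge-n}(u\,e^{-inx}f_n)$ forces $e^{-inx}f_n\to C g_\infty$ with $|C|=1$, and $C=1$ is then pinned down through the normalisations \eqref{normalisation f_n}. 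This gives $|1-\rho_n|\lesssim n^{-\tau(s)-s}$, placing the first summand in $\h^{s+\tau(s)}$ as well; adding the three summands proves the membership statement.

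\emph{Sharpness and main obstacle.} For the negative statements I would construct saturating potentials: as $\Phi^{-1}:\h^{s+1/2}\to H^s_{r,0}$ is a diffeomorphism by \eqref{restrictions of Phi}, one picks $u$ with $\gamma_k=|\zeta_k|^2$ of exact critical size --- e.g. $\gamma_k\asymp k^{-1-2s}(\log k)^{-2}$ for $0<s<1/2$, and a borderline lacunary choice at $s=1/2$ --- and checks that for such $u$ the term $\overline{(1-\rho_n)}\langle u|f_n\rangle$ dominates, with $|\Xi_n|\asymp\sqrt{\gamma_n}\,n^{\tau(s)-1/2}$ up to logarithms, so that $(\Xi_n)_n\notin\h^{s+\tau(s)+\e}$ for any $\e>0$. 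The main obstacle is twofold: (i) extracting the \emph{full} gain $\tau(s)$ from the Hankel terms at the endpoints $s=0$ and $s=1/2$, where crude pointwise-in-$n$ bounds lose an $\e$ and one must use the Hilbert--Schmidt counting behind Appendix \ref{Hankel operators}; and, the conceptually deeper point, (ii) identifying the limit of the leading coefficient $\rho_n=\langle\phi_n|f_n\rangle$ as exactly $1$ --- the elementary estimates only give $|\rho_n|\to1$ --- which is the precise link between Tao's gauge transform and the Birkhoff map and genuinely uses \eqref{normalisation f_n}.
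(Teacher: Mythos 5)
Your opening reduction is sound and in fact parallels the paper's: isolating $(n-\lambda_n)\la 1\,|\,f_n\ra$, the Hankel-type term $\la u\,|\,(\mathrm{Id}-\Pi)(g_\infty e^{inx})\ra$ (the paper's $T_{2,n}$, correctly handled by Lemma \ref{Hankel}), and a remainder measuring how far $g_\infty e^{inx}$ is from a multiple of $f_n$; and the approximate-eigenfunction identity $(L_u-n)\phi_n=\Pi\big(u\,(\mathrm{Id}-\Pi)(g_\infty e^{inx})\big)$ is a legitimate alternative entry point to the paper's exact equation $(\mathrm{Id}+K_n+K_n')[g_\infty-g_n]=\la g_\infty-g_n|g_\infty\ra g_\infty+K_ng_\infty$. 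The genuine gap is that the heart of the lemma is exactly the step you do not carry out. After inserting the resolvent formula, $\la u\,|\,\tilde\phi_n\ra=\sum_{m\ne n}(\lambda_m-n)^{-1}\overline{\la u(\mathrm{Id}-\Pi)(g_\infty e^{inx})\,|\,f_m\ra}\,\la u\,|\,f_m\ra$ is a bilinear sum over the eigenbasis $f_m$ with resolvent denominators; it is \emph{not} a Hankel form in Fourier variables, so Lemma \ref{Hankel} does not apply to it as stated, and extracting the full gain $\tau(s)$ for $0\le s\le 1/2$ is precisely where the paper invests its main effort (the operators $K_n,K_n'$, the Neumann series, and the weighted bilinear estimate of the term $W_n$ through the sums $B_{n,p}$). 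You flag this yourself as the ``main obstacle'', i.e.\ it remains unproved. Likewise, the rate $|1-\rho_n|\lesssim n^{-\tau(s)-s}$ does not follow from a recursion $\rho_n=\sqrt{\mu_n}\,\rho_{n-1}+\varepsilon_n$ with $(\varepsilon_n)\in\ell^1$: summability gives convergence of $\rho_n$ but no polynomial rate, and even the $\ell^1$ claim is unsubstantiated at small $s$ (crude bounds on $\tilde\phi_{n-1}$ only give $O(n^{-1/2})$ when $s=0$). The paper obtains the quantitative bound $|1-\overline{\rho_n}|=|\la g_\infty-g_n|g_\infty\ra|\lesssim n^{-1-2s}$ by telescoping $g_\infty-g_n=\sum_{k\ge n}(g_{k+1}-g_k)$, using $\la g_{k+1}|g_k\ra=\sqrt{\mu_{k+1}}$ and Lemma \ref{estimate mu_n}; some argument of this quantitative kind is indispensable.

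The sharpness part of your proposal is aimed at the wrong term. The coefficient term $\overline{(1-\rho_n)}\,\la u|f_n\ra$ is of strictly higher order for every $u$: with $|1-\rho_n|\lesssim n^{-1-2s}$ and $|\la u|f_n\ra|\lesssim\sqrt{n\gamma_n}$ it lies in $\h^{1+3s}$, which is better than $\h^{s+\tau(s)}$, so it can never cause failure of membership in $\h^{s+\tau(s)+\e}$; the paper indeed proves $(\Xi_n-T_{2,n})_{n\ge1}\in\h^{s+\tau_1(s)}$ with $\tau_1(s)>\tau(s)$ for $s>0$, so optimality must come from the Hankel term $T_{2,n}$, i.e.\ from $\Pi\big(\overline{g_\infty}\,(\mathrm{Id}-\Pi)u\big)$. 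Moreover, your claimed lower bound $|\Xi_n|\asymp\sqrt{\gamma_n}\,n^{\tau(s)-1/2}$ with $\gamma_n\asymp n^{-1-2s}(\log n)^{-2}$ gives $|\Xi_n|\asymp n^{-1/2}(\log n)^{-1}$, which is not even in $\h^{s+\tau(s)}$ for $s>0$, contradicting the membership statement you prove in the first part; and prescribing the actions $\gamma_k$ through $\Phi^{-1}$ does not control $T_{2,n}$, which depends on the Fourier coefficients of $u$ through a convolution-type sum, not on the spectral gaps alone. The paper's construction is of a different nature: take $u=-v-\overline v$ with explicit positive Fourier coefficients $\widehat v(k)=k^{-1/2-s}/\log(1+k)$ (a logarithmically modified choice at $s=1/2$), reduce by the invertible Toeplitz operator $T_{e^{-i\partial_x^{-1}\overline v}}$ to the assertion $\Pi[e^{-i\partial_x^{-1}v}\,\overline v]\notin H^{2s+1/2+\e}$, and verify this by a positivity argument on Fourier coefficients. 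Without these two ingredients --- the full-gain estimate of the resolvent term and a lower-bound mechanism tied to $T_{2,n}$ --- the proposal does not prove the lemma.
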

\begin{proof}
Assume that $u \in H^s_{r,0}$ with $s \ge 0$. Then for any $n \ge 1$,
\begin{equation}\label{identity 1}
n  \langle 1 | f_n  \rangle = ( n - \lambda_n)  \langle 1 | f_n  \rangle + \langle 1 |  L_u f_n\rangle =  
( n - \lambda_n)  \langle 1 | f_n  \rangle -  \langle u | f_n  \rangle
\end{equation}
and by \eqref{formula 1}
\begin{equation}\label{identity 2}
n \langle 1 | g_\infty e^{inx}  \rangle = - \langle u | g_\infty e^{inx}  \rangle \, .
\end{equation}
Substituting \eqref{identity 1} and \eqref{identity 2} into the definition of $\Xi_n \equiv \Xi_n(u)$, one gets
$$
\Xi_n  = T_{1, n}  + \langle u | g_\infty e^{inx} - f_n \rangle \, , \qquad
T_{1, n}:=  ( n - \lambda_n)  \langle 1 | f_n  \rangle \,.
$$
We then write $ \langle u | \, g_\infty e^{inx} - f_n \rangle =T_{2,n}+T_{3,n}$ where
$$
T_{2,n} := \langle (\text{Id} - \Pi) u \,  | \,  g_\infty e^{inx} \rangle \, ,
\qquad T_{3,n} := \langle \Pi u \, | \,  g_\infty e^{inx} - f_n   \rangle \, .
$$
We thus have 
$$
\Xi_n(u)  = \sum_{j=1}^3 T_{j, n}(u) \, , \qquad T_{j, n} \equiv T_{j, n}(u) \, , \quad 1 \le j \le 3 \, .
$$
We begin by estimating $T_{1,n}$. By \eqref{trace formulas},
\begin{equation}\label{estimate n - lambda_n}
0 \le n - \lambda_n = \sum_{k \ge n+1} \gamma_k \le  \frac{1}{n^{1 + 2s}} \sum_{k \ge n+1} k^{1+2s}\gamma_k\, .
\end{equation}
Since by  \eqref{def Birkhoff}, Lemma \ref{smooting} , and \cite[Proposition 5]{GKT1}, one has 
\begin{equation}\label{estimate average f_n}
\langle 1 | f_n  \rangle = \h_n^{1+ s} \, ,
\end{equation}
the inequality  \eqref{estimate n - lambda_n} implies that 
\begin{equation}\label{estimate T1n}
T_{1,n}   =  ( n - \lambda_n)  \langle 1 | f_n  \rangle  = \h^{s + (2 + 2s)}_n \,  .
\end{equation}
To estimate $T_{2,n}$ we note that
\begin{equation}\label{def T_2,n}
T_{2,n}=\la   u  | \,  (\text{Id}  -\Pi )[g_\infty {\rm e}^{inx}]\ra =\la  \overline{g_\infty}  (\text{Id}  -\Pi )u | \,  {\rm e}^{inx}\ra \, .
\end{equation}
Hence for any $\rho \in \R$, $(T_{2,n})_{n \ge 1} \in \h^\rho$ if and only if $\Pi ( \overline g_\infty  (\text{Id} -\Pi )u )\in H^\rho _+$. 
If $u\in H^s_{r,0}$ with $s \ge 0$, then $g_\infty \in H^{s+1}_c$ and 
by the smoothing properties of Hankel operators recorded in Lemma \ref{Hankel}(i),(ii),(iii) with $\alpha = 1$,
we infer that
\begin{equation}\label{estimate T2n}
(T_{2, n}(u))_{n \ge 1} \in  \h^{s+\tau (s)} \,  , \qquad \forall \, u \in H^s_{r,0} \, .
\end{equation}
To estimate $T_{3, n}$, we write 
\begin{equation}\label{formula T_3,n}
T_{3,n}=\la \Pi u | \, e^{inx} (g_\infty -g_n) \ra \ , \qquad
g_n := f_n  \, e^{-inx} \, .
\end{equation}
By writing $g_\infty-g_n$ as a telescoping sum, 
\begin{equation}\label{tele}
g_\infty -  g_n = \sum_{k \ge n} ( g_{k+1} - g_k) \, , 
\end{equation}
we proved in \cite{GKT2} (cf. \cite[Proposition 9]{GKT2}) that for any $u \in H^s_{r,0}$ with $s \ge 0$, $g_n$
 converges in $H^{1+s}_c$ to $g_\infty$ as $n \to \infty$.
 To improve the estimates for $g_\infty -  g_n$, obtained in \cite{GKT2}, we write
 $$
 g_\infty -  g_n = \la g_\infty -g_n | \,  g_\infty \ra g_\infty + r_n \, .
 $$
Whereas $\la g_\infty -g_n | \,  g_\infty \ra$ will be estimated using \eqref{tele}, 
we need to analyze the remainder term $r_n \equiv r_n(u)$ further.
To this end we introduce
for any $u \in H^0_{r,0}$ and $n \ge 1$ the operators 
$$
K_n \equiv K_n(u) : \, H^{\frac 12 + \e}_c \to H^1_c, \, f \mapsto  g_\infty D^{-1}[\overline g_\infty \Pi _{<-n}(uf)] \, ,
$$
$$
K'_n \equiv K'_n(u): \, H_c^0 \to H^1_c, \, f \mapsto (n-\lambda_n)g_\infty D^{-1}[\overline{g_\infty} f] \, , \quad
$$
where $D^{-1}:=i\partial_x^{-1}$ and $\e > 0$.
\begin{lemma}\label{ginfty -gn}
For any $u \in H^0_{r,0}$ and $n\geq 1$,
\begin{equation}\label{eq:ginfty-gn}
(\text{Id} +K_n+K'_n)[g_\infty -g_n] = \la g_\infty -g_n | \,  g_\infty \ra g_\infty +K_ng_\infty \, .
\end{equation}
Furthermore, for any $u \in H^s_{r,0}$ with $0 \le s \le 1/2$, $0 < \e < 1/2+s$, and $n \ge 1$,
there exists a constant $C_{s,\e} > 0$ so that
\begin{align}
&\Vert K_nf \Vert_{1/2+\e}+\Vert K_n'f \Vert_{1/2+\e}\leq \frac{C_{s,\e}}{n^{1/2+s-\e}} \Vert f \Vert_{1/2+\e}\ , \label{est:K1}\\ 
&\Vert K_nf \Vert+ \Vert K_n'f \Vert \leq \frac{C_{s,\e}}{n^{1+s}}\Vert f \Vert_{1/2+\e}\ . \label{est:K2}
\end{align}
Similarly, for any  $u \in H^s_{r,0}$ with $s>1/2$, there exists a constant $C_{s} > 0$ so that
\begin{align}
&\Vert K_nf \Vert_{s}+\Vert K_n'f \Vert_{s}\leq \frac{C_{s}}{n} \Vert f \Vert_{s} \, , \label{est:K3}\\ 
&\Vert K_nf \Vert+ \Vert K_n'f \Vert \leq \frac{C_{s}}{n^{1+s}}\Vert f \Vert_{s} \,  . \label{est:K4}
\end{align}
The constants  $C_{s, \e}$ and $C_s$ can be chosen uniformly for $n \ge 1$ and
for bounded subsets of potentials $u \in H^s_{r,0}$.
\end{lemma}
\begin{proof}[Proof of Lemma \ref{ginfty -gn}]
Since for any $u \in H^0_{r,0}$ and  $n \ge 0$, $f_n  \in H^1_+$ is an eigenfunction of the Lax operator $L_u$ with eigenvalue $\lambda_n$,
$Df_n-\Pi [uf_n]=\lambda_n f_n$, the function $g_n= e^{-inx} f_n \in H^1_c$ satisfies
\begin{equation}\label{equation for g_n}
(D-u)g_n=(\lambda_n -n)g_n -\Pi _{<-n}[ug_n] \, ,
\end{equation}
where $\Pi_{<-n}$ is the $L^2$-orthogonal projector, defined by  
$$
f = \sum_{k \in \Z} \widehat f_k e^{ikx}  \mapsto  
\Pi_{<-n}[f] :=  \sum_{k < -n} \widehat f(k) e^{ikx} \, .
$$
Substracting \eqref{equation for g_n} from $(D-u)g_\infty =0$ we obtain
$$
(D-u)[g_\infty -g_n]=(n-\lambda_n)g_n +\Pi_{<-n}[ug_n] \, 
$$
or, after multiplying left and right hand side of the latter identity by $\overline{g_\infty}$,
and using that $D[ \overline{g_\infty} \, (g_\infty - g_n)] = - u \overline{g_\infty} \, (g_\infty - g_n) + \overline{g_\infty} \, D(g_\infty - g_n)$,
$$
D[\overline{g_\infty} \, (g_\infty -g_n) ]=(n-\lambda_n) \overline{g_\infty} \, g_n + \overline{g_\infty} \,\Pi_{<-n} [ug_n ] \,  .
$$
Applying $g_\infty D^{-1}$ to the left and right hand side of the latter identity, we obtain
$$
g_\infty -g_n  -  \la \, \overline{g_\infty} (g_\infty -g_n) | \, 1\ra g_\infty   = (K'_n+K_n)[g_n] \,  .
$$
Since $K'_n(g_\infty )=0$, identity \eqref{eq:ginfty-gn} follows.

Next we prove the estimates \eqref{est:K1} - \eqref{est:K2} for the operators $K_n$, $n \ge 1$.
First assume that $u \in H^s_{r,0}$ with $0 \le s \le 1/2$ and $0 < \e < 1/2+s$. 
Since $g_\infty \in H_c^{1+s}\subset H_c^{1/2+\e}$, 
it follows that for any $f \in H^{1/2 + \e}_c$,
\begin{eqnarray*}
\| K_nf\|_{1/2+\e}&=&\| g_\infty D^{-1}[\overline{ g_\infty} \, \Pi _{<-n}(uf)] \| _{1/2+\e} \\
& \lesssim &  \| \Pi _{<-n}[uf] \|_{-1/2+\e}\\
&\lesssim &\frac{1}{n^{1/2+s-\e}} \| uf \|_s 
\lesssim \frac{1}{n^{1/2+s-\e}} \| f \|_{1/2+\e}\ .
\end{eqnarray*}
Similarly,
\begin{eqnarray*}
\| K_nf \|&=&\| g_\infty D^{-1}[\overline{ g_\infty} \, \Pi _{<-n}(uf)] \|  
\lesssim   \| \Pi _{<-n}[uf] \|_{-1}\\
&\lesssim &\frac{1}{n^{1+s}} \| uf \|_s \lesssim \frac{1}{n^{1+s}} \| f \|_{1/2+\e}\ .
\end{eqnarray*}
Now assume $u \in H^s_{r,0}$ with $s>1/2$. Note that for such $s$, $H^s_c$ is an algebra, and $H^{1+s}_c$ acts on $H^{s-1}_c$ by mulitiplication. 
Hence for any $f \in H^s_c$, one gets
\begin{eqnarray*}
\| K_nf \|_{s} & = & \| g_\infty D^{-1}[\overline{g_\infty} \, \Pi _{<-n} (uf)] \| _{s} \\
&\lesssim & \| \Pi _{<-n}[uf] \|_{s -1}
\lesssim \frac{1}{n} \| uf \|_s 
\lesssim \frac{1}{n} \| f \|_{s}\ .
\end{eqnarray*}
Similarly,
\begin{eqnarray*}
\| K_nf \|&=&\| g_\infty D^{-1} [\overline{g_\infty} \,  \Pi _{<-n}(uf)] \| \\
& \lesssim  & \| \Pi _{<-n}[uf] \|_{-1}
\lesssim \frac{1}{n^{1+s}} \| uf \|_s 
\lesssim \frac{1}{n^{1+s}}\| f \|_{s}\ .
\end{eqnarray*}
Since for any $u \in H^s_{r,0}$ one has 
(cf. trace formula \eqref{trace formulas}, properties \eqref{absolute value Birkhoff coordinates},  \eqref{restrictions of Phi} of $\Phi$ )
$$
0 \le n-\lambda_n= 2 \sum_{k > n} \gamma_k \le  2 n^{-1-2s} \sum_{k > n} k^{1+ 2s}\gamma_k
\lesssim  \frac{1}{n^{1 + 2s} } ,
$$ 
the proofs of the claimed estimates for the operators $K'_n$, $n \ge 1$, are easier and 
hence we omit them.
Going through the arguments of the proof one verifies that the constants $C_{s, \e}$ and $C_s$
can be chosen uniformly for bounded subsets of potentials $u \in H^s_{r,0}$.
\end{proof}
Let us now continue with the proof of Lemma \ref{estimate Xi}.
The identity \eqref{eq:ginfty-gn}  and the estimates of the operators $K_n$ and $K_n'$ of Lemma \ref{ginfty -gn} allow
to write $g_\infty -g_n$ 
for $u \in H^s_{r,0}$ with $s \ge 0$ and 
$n$ sufficiently large 
as a Neumann series, $g_\infty -g_n=\e_n g_\infty + r_{n}$, where
$\e_n:=\la g_\infty -g_n\vert g_\infty \ra $ and 
\begin{equation}\label{neumann}
r_n : = (1-\e_n)\sum_{j=1}^\infty (-1)^{j+1} (K_n+K'_n)^j g_\infty \,  .
\end{equation}
Substituting $\e_n g_\infty + r_{n}$ for $g_\infty -g_n$ in the formula for $T_{3,n}$ one obtains
$T_{3,n} = \e_n \la \Pi u |  \, e^{inx} g_\infty \ra + (1 - \e_n ) \la \Pi u | \, e^{inx}  r_n \ra$, which we decompose further as
\begin{equation}\label{decom T_3,n}
T_{3,n} =   U_n  + V_n + W_n  \, , 
\end{equation}
where
\begin{equation}\label{def U_n}
U_n \equiv U_n(u) := \e_n \la \overline{g_\infty} \,  \Pi u | \,  e^{inx} \ra \, ,
\end{equation}
\begin{equation}\label{def V_n}
V_n \equiv V_n(u) :=   (1-\e_n)\sum_{r=2}^\infty (-1)^{r+1}\la \Pi u | \, e^{inx} (K_n+K'_n)^r [g_\infty] \ra \, ,
\end{equation}
and (using that $K_n'[g_\infty] = 0$)
\begin{equation}\label{def W_n}
W_n \equiv W_n(u) :=   (1-\e_n)\la \Pi u | \, e^{inx} K_n  [g_\infty]\ra\, .
\end{equation}

We first estimate $\e_n$. Representing $g_\infty-g_n$ by the telescoping sum \eqref{tele}, we get
$$ 
\e_n = \la g_\infty -g_n | \, g_\infty \ra =
\sum_{k\ge n}\la g_{k+1}-g_k | \,  g_\infty -g_k\ra +\sum_{k\ge n}(\la g_{k+1} | \,  g_k\ra -1) \,  .
$$
 Note that by the definition \eqref{def mu_k} of $\mu_{k+1}$ , 
 $$
 \la g_{k+1} | \,  g_k\ra=\sqrt{\mu_{k+1}} \,  ,
 $$
 and therefore, in view of Lemma \ref{estimate mu_n},
 \begin{equation}\label{est 1}
 \big| \sum_{k\ge n}(\la g_{k+1} | \, g_k\ra -1) \big| \lesssim  \frac{1}{n^{2+2s}} \, .
 \end{equation}
Moreover, $ \| g_{k+1} - g_k \|^2  =  \| f_{k+1} - Sf_k \|^2$ can be computed as
 $$
 \| g_{k+1} - g_k \|^2 
 =  2 - \langle g_{k+1} | \, g_k \rangle  - \langle g_{k} | \, g_{k+1}\rangle = 2 - 2 \sqrt{\mu_{k+1}} 
 $$
 and hence $\| g_{k+1} - g_k \| = \sqrt{2} (1 - \sqrt{\mu_{k+1}})^{1/2}$. 
By  Lemma \ref{estimate mu_n}, the Cauchy--Schwarz inequality, and the assumption
$s\geq 0$, one then infers that
 \begin{equation}\label{norm:ginfty -gn}
 \begin{aligned}
 \| g_\infty -g_n \| & \leq  \sum_{k \ge n} \| g_{k+1} -g_k \|  \le  \sum_{k\geq n}\sqrt 2(1-\sqrt{\mu_{k+1}})^{1/2} \\
 & \lesssim  \sum_{k\geq n} \big( k^{1+ s}(1-\sqrt{\mu_{k+1}})^{1/2} \big) \cdot \frac{1}{k^{1+ s}}
 \lesssim  \frac{1}{n^{(1+ 2s)/2 }} \, ,
 \end{aligned}
 \end{equation}
 and hence by the Cauchy-Schwarz inequality,
 \begin{align}\label{est 2}
&  \big| \sum_{k\ge n}\la g_{k+1}-g_k | \, g_\infty -g_k\ra \big|  \leq 
 \sum_{k\ge n}\sqrt 2(1-\sqrt{\mu_{k+1}})^{1/2} \| g_\infty -g_k \|  \nonumber \\
 & \lesssim  \sum_{k\ge n} \big( k^{1 + s}(1-\sqrt{\mu_{k+1}})^{1/2} \big) \cdot  \frac{1}{k^{1 + s +(1+ 2s)/2}}
 \lesssim \frac{1}{n^{1+2s  }}  \, .
\end{align}
Combining \eqref{est 1} and \eqref{est 2} we obtain 
\begin{equation}\label{est:epsilon}
| \e_n | \lesssim \frac{1}{n^{1+2s }} \,  .
\end{equation}
Using that $\overline{ g_\infty} \,  \Pi u \in H^s_c$ and taking into account the estimates  \eqref{est:epsilon}  
it then follows that $U_n$, defined by \eqref{def U_n}, satisfies
$$
U_n = \h_n^{s+1+2s} \, .
$$
Next we estimate $V_n$, defined by \eqref{def V_n}. 
First we consider the case where $0 \le  s \le 1/2$. 
From \eqref{est:K1} and \eqref{est:K2}, we have
$$
\begin{aligned}
|V_n| & \lesssim \sum_{r=2}^\infty \| (K_n+K'_n)^r [ g_\infty] \| \\
&\lesssim  \sum_{r=2}^\infty \frac{1}{n^{1+ s + (r-1)(1/2+s-\e)}} \| g_\infty \|_{1/2 + \e}
 \lesssim \frac{1}{n^{2s+3/2-\e}} \,  .
\end{aligned}
$$
Choosing $0 < \e < 1/4$,  we get
$$
V_n=\h_n^{2s+3/4} \,  .
$$
In the case where $s>1/2$, we use \eqref{est:K3} and \eqref{est:K4} to conclude that
\begin{eqnarray*}
|V_n|&\lesssim &\sum_{r=2}^\infty \| (K_n+K'_n)^r [g_\infty] \| \\
&\lesssim &  \sum_{r=2}^\infty \frac{1}{n^{1+s +r-1}}
\lesssim \frac{1}{n^s} \sum_{r=2}^\infty \frac{1}{n^r}
\lesssim \frac{1}{n^{s+2}} \,  .
\end{eqnarray*}
Hence for any $\e>0$,
$$V_n=\h_n^{s+3/2-\e} \, .
$$
It remains to estimate $W_n$, defined by \eqref{def W_n}, for $u \in H^s_{r,0}$ with $s\geq 0$.
First we note that
\begin{eqnarray*}
K_ng_\infty &=&g_\infty D^{-1}[\overline{g_\infty} \, \Pi_{<-n}(Dg_\infty )]
\\
&=& g_\infty D^{-1}D[\overline{ g_\infty} \cdot \Pi_{<-n}g_\infty ] - g_\infty D^{-1}[(D\overline{g_\infty}) \cdot \Pi_{<-n}g_\infty ]\\
&=&\Pi_{<-n}g_\infty - \|\Pi_{<-n}g_\infty  \|^2g_\infty +g_\infty D^{-1}[u\overline{g_\infty} \cdot \Pi_{<-n} g_\infty ]  \,  .
\end{eqnarray*}
Since $\la \Pi u | \, e^{inx} \Pi_{<-n}g_\infty \ra = 0$, 
it then follows that
$$
W_n  =  - \| \Pi_{<-n} g_\infty \|^2 \la \Pi u | \, e^{inx} g_\infty  \ra 
+ \la \overline{g_\infty} \,  \Pi u  | \, e^{inx} D^{-1}[u\overline{g_\infty} \,  \Pi_{<-n}g_\infty ]  \ra 
$$
or
\begin{equation}\label{first est W_n}
W_n = \h_n^{2+2s + s}+\la \overline{g_\infty} \,  \Pi u | \, e^{inx} D^{-1}[u \overline{g_\infty} \,  \Pi_{<-n} g_\infty ]\ra \,  .
\end{equation}
To estimate the latter term, let
$$
f_1:=\overline{ g_\infty} \, \Pi u \in H^s_c \, , \qquad f_2:=\overline{g_\infty} \,  u\in H^s_c \, . 
$$
Then
\begin{eqnarray*}
&& \ \ | \la \overline{g_\infty} \, \Pi u | \, e^{inx} D^{-1} [u \overline{g_\infty} \,  \Pi_{<-n} g_\infty ]\ra | ^2 \\
&& =\big| \sum_{k\in \Z \setminus \{n\}}      \overline {\widehat f_1(k)} \frac{1}{k-n}\sum_{p=1}^\infty \widehat f_2(k+p) \widehat g_\infty (-n-p)  \big|^2\\
&&\leq  \| f_1 \|_s^2 \sum_{k\ne n}\frac{1}{(k-n)^2\la k\ra ^{2s}}  \big|\sum_{p=1}^\infty \widehat f_2(k+p) \widehat g_\infty (-n-p)\big|^2
\end{eqnarray*}
Since by the Cauchy-Schwarz inequality,
$$
\begin{aligned}
& \ \ \big|\sum_{p=1}^\infty \widehat f_2(k+p) \widehat g_\infty (-n-p)\big|^2 \\
& \le \big(\sum_{p=1}^\infty | \widehat f_2(k+p) |^2 \la k+ p \ra^{2s} \big) \cdot 
 \big(\sum_{p=1}^\infty  \frac{1}{ \la k+ p \ra^{2s}} | \widehat g_\infty (-n-p) |^2 \big) \\
& \le \| f_2 \|_s^2  \big(\sum_{p=1}^\infty  \frac{1}{ \la k+ p \ra^{2s}} | \widehat g_\infty (-n-p) |^2 \big) \, ,
 \end{aligned}
$$
we get
\begin{align}\label{second est W_n}
& \ \ | \la \overline{g_\infty} \, \Pi u | \, e^{inx} D^{-1} [u \overline{g_\infty} \,  \Pi_{<-n} g_\infty ]\ra | ^2  \nonumber\\
& \leq  \| f_1 \|_s^2\, \| f_2 \|_s^2\, \sum_{p=1}^\infty B_{n,p} | \widehat g_\infty (-n-p) | ^2 \,  ,
\end{align}
where
$$
B_{n,p} := \sum_{k\ne n}\frac{1}{(k-n)^2\la k\ra ^{2s}\la k+p\ra ^{2s}}  \,  .
$$
Splitting the latter sum into two sums with domains $0<|k-n|<n/2$ and $|k-n|\geq n/2$ respectively, one concludes that for any $\e >0$,
$$
 \sum_{0 < |k - n| < n/2} \frac{1}{(k-n)^2\la k\ra ^{2s}\la k+p\ra ^{2s}}  \lesssim  \frac{1}{\la n\ra ^{2s} ( n + p) ^{2s}} 
  \sum_{0 < |k - n| < n/2} \frac{1}{(k-n)^2}\, ,
$$
$$
\begin{aligned}
 \sum_{ |k - n| \ge  n/2} \frac{1}{(k-n)^2\la k\ra ^{2s}\la k+p\ra ^{2s}}  
 & \lesssim \frac{1}{n^2}  \sum_{ |k| <  3n/2} \frac{1}{\la k \ra ^{2s} } 
 + \frac{1}{n^{1 - \e}}  \sum_{ |k| \ge  3n/2} \frac{1}{\la k \ra ^{1 + \e + 4s} } \, ,
\end{aligned}
$$
and hence
\begin{equation}\label{third est W_n}
B_{n,p}\lesssim  \frac{1}{n^{2s}(n+p)^{2s}} + \frac{1}{n^{1 +2s - \e } } \, .
\end{equation}
Given any $\gamma >0$, it then follows from \eqref{first est W_n}--\eqref{third est W_n}
that $\sum_{n=1}^\infty n^{2(s+\gamma)}|W_n|^2$ can be bounded by
\begin{eqnarray*}
&\lesssim& \sum_{n=1}^\infty  n^{2\gamma -4-4s} 
 \, \ell^1_n \, + \, \sum_{n=1}^\infty n^{2\gamma}\sum_{q>n}|\widehat g_\infty(-q)|^2(q^{-2s}+ n^{-1+\e }) \\
& \lesssim &\sum_{n=1}^\infty n^{2\gamma -4-4s} \, \ell^1_n \, + \, \sum_{q=1}^\infty (q^{2\gamma +1-2s}+q^{2\gamma+\e } )|\widehat g_\infty(-q)|^2 \, .
\end{eqnarray*}
 Using that $g_\infty \in H^{1+s}_c$ and choosing $\gamma$ as
 $$
 \tau_1(s) = \begin{cases}
 (1 + s)-   \quad   \mbox{ if } \, s > \frac 12 \\
 \frac 32 -  \qquad \quad \  \mbox{ if } \, s = \frac 12 \\
   \frac 12+2s \qquad  \  \mbox{ if } \, 0 \le s < \frac 12
 \end{cases} 
 $$
 we conclude that for any $u \in H^s_{r,0}$ the latter two sums are finite.

 In summary, we have proved that for any $u \in H^s_{r,0}$ with $s\geq 0$, $T_{3,n} =  \h_n^{s+\tau_1(s)}$.
 Note that by the definition \eqref{tau} of $\tau(s)$ and the one of $\tau_1(s)$, one has 
 $$
 \tau(0) = \frac 12 = \tau_1(0) \, ,  \qquad 
 \tau(s) < \tau_1(s) \, , \quad  \forall \, s > 0 \, .
 $$
Combining the estimate \eqref{estimate T1n} of $T_{1,n}$ and the estimate \eqref{estimate T2n} of
$T_{2,n}$ with the estimate of $T_{3,n}$, we conclude that for any $u \in H^s_{r,0}$, 
$$
\Xi_n(u) = \sum_{1 \le j \le 3} T_{j,n}(u) = \h_n^{s+\tau (s)}
$$ 
and that for any $u \in H^s_{r,0}$ with $s>0$, 
\begin{equation}\label{remainder1}
(\Xi_n(u)-T_{2,n}(u))_{n \ge 1} \in \h^{s+\tau_1(s)} \, , \qquad  \tau_1(s)>\tau (s) \, .
\end{equation}
Furthermore, going through the arguments of the proof one verifies that for any $s \geq 0$, the stated estimates hold uniformly 
on bounded subsets of potentials $u$ in $H^s_{r,0}$.

\smallskip
 It remains to study the optimality of these estimates for $0 < s \le 1/2$. 
 First consider the case $0 < s < 1/2$. Assume that $u\in H^s_{r,0}$ satisfies
 $( \Xi_n(u) )_{n \ge 1} = \h^{s+\tau(s)+\e}$ for some $\e>0$.
 In view of the estimate \eqref{remainder1} of $(\Xi_n(u)-T_{2,n})_{n \ge 1}$, it then follows that for $\e>0$ small enough,  
 $(T_{2,n}(u))_{n \ge 1}  \in \h^{2s+1/2+\e}$ or, by the formula \eqref{def T_2,n} for $T_{2, n}$,
 $$
 \Pi (\overline g_\infty (\text{Id}-\Pi)u)\in H^{2s+1/2+\e} \, .
 $$
 Set $v:=-\Pi u$. Since $u$ is real valued and has vanishing mean, one has $(\text{Id}-\Pi)u = - \overline v$ and hence
 $$
 \Pi [\overline g_\infty (\text{Id}-\Pi)u) ]=-\Pi [e^{- i \partial_x^{-1}\overline v} e^{- i \partial_x^{-1}v} \overline v] \,  .
 $$
 Since $e^{ -i \partial_x^{-1} v}$ is in the Hardy space $H^{s+1}_+$ and
 $e^{- i \partial_x^{-1}\overline v}$ in $H^{s+1}_-$, it follows that
 $$
  \Pi [e^{- i \partial_x^{-1}\overline v} ( \mbox{Id} - \Pi ) (e^{- i \partial_x^{-1}v} \overline v )] = 0
 $$
 and hence
 $$
 \begin{aligned}
  \Pi [\overline g_\infty (\text{Id}-\Pi)u)] & = -   \Pi [e^{ -i \partial_x^{-1}\overline v} \Pi (e^{- i \partial_x^{-1}v} \overline v )] \\
 & = - T_{e^{ -i \partial_x^{-1}\overline v}} [ \Pi (e^{- i \partial_x^{-1}v} \overline v )] \, .
 \end{aligned}
 $$
 Note that the Toeplitz operator $ T_{e^{ -i \partial_x^{-1}\overline v}}$ with symbol $e^{ -i \partial_x^{-1}\overline v}$
 is a linear isomorphism on $H^\rho _+$ for any $0 \le \rho \le 1+s$ and that its inverse is given by 
 $ T_{e^{ i \partial_x^{-1}\overline v}}$ (cf. e.g. \cite{La}, \cite[Section 6]{GKT4}). 
 Since  $1+s\geq 1/2+2s+\e $ for $\e $ small enough, we then conclude that
 \begin{equation}\label{Hankelcritique}
 \Pi [e^{-i \partial_x^{-1}v}\overline v ]\in H^{2s+1/2+\e} \, .
 \end{equation}
 Let us choose $u=-v-\overline v$ with
 \begin{equation}\label{example 1}
 v(x)=\sum_{k=1}^\infty \frac{e^{ikx}}{k^{1/2+s}\log (1+k)} \, .
 \end{equation}
 Clearly, $v\in H^s_+$ and $u\in H^s_{r,0}$. We claim that \eqref{Hankelcritique} fails for every $\e>0$. 
 Indeed, observe that the Fourier coefficients of $v$ and of $i \partial_x^{-1}v$ are positive, 
 and so are the Fourier coefficients of $(i\partial_x^{-1}v)^p \overline v $ for every integer $p\ge 1$. Expanding
 $$
 e^{i \partial_x^{-1}v}=\sum_{p=0}^\infty \frac{(i \partial_x^{-1}v)^p}{p!} \, ,
 $$
 we infer that the $k$th Fourier coefficient of 
 $\Pi (e^{i\partial_x^{-1}v}\overline v)$ is larger than the $k$th Fourier coefficient of $\Pi ((i \partial_x^{-1}v) \overline v)$, 
 and hence \eqref{Hankelcritique} implies
 \begin{equation}\label{Hankelcritiquebis}
 \Pi [(i \partial_x^{-1}v)\overline v] \in H^{2s+1/2+\e} \,  .
 \end{equation}
 For any $k\ge 1$, the $k$th Fourier coefficient of $\Pi [(i \partial_x^{-1}v)\overline v]$ can be computed as
 $$
 a_k = \sum_{j \in \Z} \frac{\widehat v(k - j ) \widehat{\overline v(j)}}{k-j}= 
 \sum_{j \le -1} \frac{\widehat v(k - j ) \widehat{ v(- j)}}{k-j} = 
 \sum_{\ell =1}^\infty \frac{\widehat v(k+\ell )\widehat v(\ell )}{k+\ell}
 $$
 and thus by the definition \eqref{example 1} of $v$, one has
 \begin{eqnarray*}
 a_k&=&\sum_{\ell =1}^\infty \frac{1}{(k+\ell)^{3/2+s}\log (1+k+\ell)\ell^{1/2+s}\log (1+\ell)}\\
 &\geq &\sum_{\ell =1}^k \frac{1}{(k+\ell)^{3/2+s}\log (1+k+\ell)\ell^{1/2+s}\log (1+\ell)}\\
 &\geq & \frac{2^{-3/2 -s}}{k^{1+2s}\log (1+2k)\log (1+k)} \, ,
  \end{eqnarray*}
  so that
  $$
  k^{2s+1/2+\e}a_k\geq b_k := \frac{2^{-3/2 -s }}{k^{1/2-\e}[\log (1+2k)]^2} \, .
  $$
Clearly,  for any $\e >0$, $(b_k)_{k \ge 1}$ is not an $\ell ^2$-sequence. This contradicts \eqref{Hankelcritiquebis}
and shows that for $u = - v - \overline{v}$ with $v$ given by \eqref{example 1}, 
 $(\Xi_n(u))_{n \ge 1} \notin \h^{2s+1/2+\e}$ for any $\e > 0$.

In the case $s=1/2$, we argue similarly as in the case $0 < s < 1/2$. Choose $u=-v-\overline v$ with 
$$ 
v(x)=\sum_{k=1}^\infty \frac{e^{ikx}}{k[\log (1+k)]^\alpha } \, ,
$$
where $1/2 < \alpha < 3/4$. Since $1/2 < \alpha$ it follows that $u\in H^{1/2}_{r,0}$. In this case,
the $k$th Fourier coefficient of $\Pi [i\partial_x^{-1}v \overline v] $ is
 \begin{eqnarray*}
 a_k&=&\sum_{\ell =1}^\infty \frac{\hat v(k+\ell )\hat v(\ell )}{k+\ell}=\sum_{\ell =1}^\infty \frac{1}{(k+\ell)^2[\log (1+k+\ell)]^\alpha \ell[\log (1+\ell)]^\alpha }\\
 &\geq &\sum_{\ell =1}^k \frac{1}{(k+\ell)^2[\log (1+k+\ell)]^\alpha \ell[\log (1+\ell)]^\alpha }\\
&\geq & \frac{4^{-1}}{k^{2}[\log (1+2k)]^\alpha [\log (1+k)]^\alpha }\sum_{\ell=1}^k\frac{1}{\ell}\\ 
&\gtrsim & \frac{1}{k^2[\log (k)]^{2\alpha -1}}
  \end{eqnarray*}
  so that
  $$
  k^{3/2}a_k\gtrsim  b_k := \frac{1}{k^{1/2}[\log (k)]^{2\alpha-1}} \, .
  $$
Since $\alpha <3/4$, $(b_k)_{k \ge 1}$ is not an $\ell^2$-sequence, 
hence $\Pi [(i\partial_x^{-1}v) \overline v] $ does not belong to $H^{3/2}$, 
and consequently $(\Xi_n(u))_{n \ge 1}\notin \h^{3/2}$. 

This finishes the proof of Lemma \ref{estimate Xi}.
\end{proof} 

 In view of the definition \eqref{def Xi} of $\Xi$, Lemma \ref{estimate Xi} yields the following 
\begin{corollary}\label{Phi_1 - Phi_0}
For any $s \geq 0$, the difference $\Phi_1 - \Phi_0$ is $\tau (s)$-smoothing, meaning that  for any $s \ge 0$, 
$\Phi_1 - \Phi_0$ is a continuous map from $H^s_{r, 0}$ with values in $\h^{s+1/2 + \tau (s)}$. 
Furthermore, there exists $u\in H^{1/2}_{r,0}$ so that $\Phi_1(u)-\Phi_0(u)\notin \h^{2}$
and for $0 < s < 1/2$, there exists $u\in H^s_{r,0}$ so that for any $\e >0$, 
$\Phi_1(u)-\Phi_0(u)\notin \h^{2s+1+\e} $. 
 \end{corollary}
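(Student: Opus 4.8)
The plan is to deduce Corollary \ref{Phi_1 - Phi_0} directly from Lemma \ref{estimate Xi} by exploiting the defining relation \eqref{def Xi}, namely $\Xi_n(u) = \sqrt{n}\,\big(\Phi_1(u) - \Phi_0(u)\big)_n$ for every $n \ge 1$. From this relation one reads off, for any $\rho \in \R$,
\[
\|\Phi_1(u) - \Phi_0(u)\|_\rho^2 = \sum_{n \ge 1} n^{2\rho}\, \big|(\Phi_1(u) - \Phi_0(u))_n\big|^2 = \sum_{n \ge 1} n^{2\rho - 1}\, |\Xi_n(u)|^2 = \|(\Xi_n(u))_{n \ge 1}\|_{\rho - \frac 12}^2\, ,
\]
so $\Phi_1(u) - \Phi_0(u) \in \h^\rho$ if and only if $(\Xi_n(u))_{n \ge 1} \in \h^{\rho - 1/2}$, with equality of the corresponding norms. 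Thus all three assertions of the corollary are those of Lemma \ref{estimate Xi} rewritten with $\rho = s + \frac 12 + \tau(s)$ for the smoothing statement and with $\rho = s + \frac 12 + \tau(s) + \e$ (equivalently $\rho = 2s + 1 + \e$ when $0 < s < \frac 12$, and $\rho = 2$ when $s = \frac 12$) for the optimality statements.

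For the smoothing statement I would argue as follows. By Lemma \ref{estimate Xi}, $(\Xi_n(u))_{n \ge 1} \in \h^{s+\tau(s)}$ for every $u \in H^s_{r,0}$, so the displayed identity with $\rho = s + \frac 12 + \tau(s)$ shows that $\Phi_1 - \Phi_0$ maps $H^s_{r,0}$ into $\h^{s + 1/2 + \tau(s)}$. The only point not literally contained in the statement of Lemma \ref{estimate Xi} is the continuity of this map, which I would obtain by revisiting the proof of that lemma: each of the contributions $T_{1,n}$, $T_{2,n}$, $T_{3,n}$ to $\Xi_n(u)$ was estimated there with bounds uniform on bounded subsets of $H^s_{r,0}$, and each of the ingredients entering those estimates (the eigenvalues $\lambda_n$, the averages $\langle 1 | f_n \rangle$, the function $g_\infty$, the operators $K_n$ and $K'_n$, etc.) depends continuously on $u$; continuity of $\Phi_1 - \Phi_0 : H^s_{r,0} \to \h^{s+1/2+\tau(s)}$ then follows by controlling the tails of the series uniformly on a neighbourhood of a given potential and passing to the limit term by term.

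For the optimality claims I would simply transcribe the counterexamples from the proof of Lemma \ref{estimate Xi}. When $s = 1/2$, the potential $u = -v - \overline v$ with $v(x) = \sum_{k \ge 1} e^{ikx}/\big(k[\log(1+k)]^\alpha\big)$, $1/2 < \alpha < 3/4$, satisfies $(\Xi_n(u))_{n \ge 1} \notin \h^{3/2}$, hence $\Phi_1(u) - \Phi_0(u) \notin \h^2$ by the displayed identity with $\rho = 2$; when $0 < s < 1/2$, the potential $u = -v - \overline v$ with $v$ as in \eqref{example 1} satisfies $(\Xi_n(u))_{n \ge 1} \notin \h^{2s+1/2+\e}$ for all $\e > 0$, whence $\Phi_1(u) - \Phi_0(u) \notin \h^{2s+1+\e}$ for all $\e > 0$. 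I do not expect a genuine obstacle here: the content of the corollary is already in Lemma \ref{estimate Xi}, and the only mildly delicate step is upgrading the pointwise membership plus uniform boundedness established in that lemma to the continuity assertion, which is routine given the structure of its proof.
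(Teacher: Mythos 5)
Your proposal is correct and follows essentially the same route as the paper: the corollary is obtained from Lemma \ref{estimate Xi} purely through the weight shift $\Xi_n(u)=\sqrt{n}\,(\Phi_1(u)-\Phi_0(u))_n$, which converts $\h^{s+\tau(s)}$ (resp.\ the non-membership statements) into $\h^{s+1/2+\tau(s)}$ (resp.\ $\h^{2}$ and $\h^{2s+1+\e}$), and the continuity is read off from the uniform estimates in the proof of that lemma, exactly as you indicate.
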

\begin{proof}[Proof of Theorem  \ref{Theorem Phi - Phi_L}]
The claimed results directly follow from 
Corollary \ref{estimate Phi - Phi1}  and Corollary \ref{Phi_1 - Phi_0}.
\end{proof}


\section{Approximations of the Benjamin--Ono flow}\label{applications}
In this section we apply Theorem \ref{Theorem Phi - Phi_L} to prove
smoothing properties of the flow map of the Benjamin-Ono equation, 
stated in Theorem \ref{smoothing 1}, Theorem \ref{smoothing 2} and Corollary \ref{smoothing of u by w_L}.  

Recall from Section \ref{introduction} that for any $u_0 \in H^s_{r,0}$ with $s \ge 0$, we denote by $u(t) = \mathcal S(t,u_0)$
the solution of the Benjamin-Ono equation constructed in \cite{GKT1} and by $w(t)$
the gauge transformation of $u(t)$ (cf. \eqref{gauge}),
$$
w(t) = \mathcal G (u(t))=\partial_x \Pi(e^{-i\partial_x^{-1}u(t)}) \, , 
\quad w_0:= w(0) = \partial_x \Pi(e^{- i\partial_x^{-1}u_0}) \, .
$$
Furthermore, we introduced (cf. \eqref{def w_L}, \eqref{def w_L*})
\begin{equation}\label{def w_Lbis}
w_L(t) = \sum_{n \ge 1} e^{it(n^2 - \langle u_0^2 | 1 \rangle)}\widehat w_0(n) e^{inx}  \, ,
\end{equation}
\begin{equation}\label{def w_L*bis}
w_{L, \ast}(t) = \sum_{n \ge 1} e^{it \omega_n } \widehat w_0(n) e^{inx} \, . \qquad \
\end{equation}
\begin{proof}[Proof of Theorem \ref{smoothing 2}]
First we prove \eqref{approx 2}, saying that
for any bounded subset $\mathcal B$ of $H^s_{r,0}$ with $s \ge 0$, there exists $M_s > 0$ so that
\begin{equation}\label{approx wL*}
\sup_{t\in \R}\Vert w(t)-w_{L,\ast}(t)\Vert_{s+\tau (s)}\leq M_s \,  , \qquad  \forall \, u_0 \in \mathcal B \, .
\end{equation}
Recall that by \cite[Corollary 8, Appendix A]{GKT1}, 
there exists a bounded subset $\tilde {\mathcal B} $ of $H^s_{r,0}$ 
so that $\mathcal S(t, u_0)\in \tilde {\mathcal B}$ for any $t \in \R$ and $u_0\in \mathcal B$. 
By Theorem \ref{Theorem Phi - Phi_L}, applied to $u=u(t)$ and \eqref{w and Phi_0} it follows that
\begin{equation}\label{evolution w(t) vs zeta(t)}
\widehat {w(t)}(n)=i\sqrt n\zeta_n(u(t))+\rho_n(t) \,  ,  \qquad  \forall \, n\geq 1 \, ,
\end{equation}
where $(\rho_n(t))_{n\geq 1}$ is uniformly bounded in $\h^{s+\tau (s)}$ with respect to $t\in \R$. 
Since by \eqref{evolution in Birkhoff},
$$
i\sqrt n \zeta_n(u(t)) = i\sqrt n e^{it\omega_n}\zeta_n (u_0) \,  ,
$$
and by \eqref{evolution w(t) vs zeta(t)}  for $t=0$, 
$$
 \widehat {w(0)}(n) = i\sqrt n\zeta_n(u_0) + \rho_n(0) \, ,
$$
we conclude that for any $t \in \R$, 
\begin{equation}\label{approx wL*fourier}
\widehat {w(t)}(n) - e^{it\omega_n}\widehat {w(0)}(n) = \rho_n(t)-e^{it\omega_n}\rho_n(0) \,  ,
\qquad \forall n\geq 1 \, ,
\end{equation}
which proves \eqref{approx wL*}.
It remains to prove estimate \eqref{smoothing 2 of u by w_L B}. 
Following the suggestion in \cite{T}, we write
$u(t) = e^{i\partial_x^{-1} u(t)} e^{- i\partial_x^{-1} u(t)} u(t)$ as
$$
u(t) = e^{i\partial_x^{-1} u(t)}  \Pi [e^{- i\partial_x^{-1} u(t)} u(t)]
+ e^{i\partial_x^{-1} u(t)}  (\text{Id} - \Pi) [e^{- i\partial_x^{-1} u(t)} u(t)] \, .
$$
Since $\Pi [e^{- i\partial_x^{-1} u(t)} u(t)] = i w(t)$ one has 
$$
e^{i\partial_x^{-1} u(t)}  \Pi [e^{- i\partial_x^{-1} u(t)} u(t) ]
= e^{i\partial_x^{-1} u(t)}  iw_{L,\ast}(t) + e^{i\partial_x^{-1} u(t)}  i(w(t) - w_{L,\ast}(t))\, .
$$
Recall that $u(t)$ is real valued and satisfies $\la u(t) | 1 \ra = 0$. Hence $u(t) = 2 \text{Re} (\Pi u(t)) $.
Splitting the term $2 \text{Re} \big( \Pi[ e^{i\partial_x^{-1} u(t)}  iw_{L,\ast}(t) ] \big)$ as
$$
2 \text{Re}( e^{i\partial_x^{-1} u(t)}  iw_{L,\ast}(t) )
- 2 \text{Re}\big(  (\text{Id} - \Pi)[ e^{i\partial_x^{-1} u(t)}  iw_{L,\ast}(t) ] \big)
$$ 
one then concludes that
$$
u(t) = 2\text{Re} \big(e^{i\partial_x^{-1} u(t)}  iw_{L,\ast}(t)  \big) + r_\ast (t)
$$
where $r_\ast (t):=  I(t) + II(t) + III(t)$ and 
$$
I(t) :=  - 2\text{Re} \big((\text{Id} - \Pi) [ e^{i\partial_x^{-1} u(t)}  iw_{L,\ast}(t) ] \big) \, , \qquad
$$
$$
II(t) := 2\text{Re} \big(\Pi [e^{i\partial_x^{-1} u(t)}  i(w(t) - w_{L,\ast}(t)) ] \big) \, , \qquad \
$$
$$
III(t) := 2 \text{Re} \big(  \Pi[ e^{i\partial_x^{-1} u(t)}  (\text{Id} - \Pi) (e^{- i\partial_x^{-1} u(t)} u(t))]   \big) \, .
$$
Since $\partial_x^{-1} u(t) \in H^{s+1}_{r, 0}$, the claimed estimate of $r_*(t)$ is obtained by estimating the term $II(t)$  
with the help of estimate \eqref{approx 2}
and the terms $III(t)$, $I(t)$ by Lemma \ref{Hankel} in Appendix A. 
\end{proof}
\begin{proof}[Proof of Theorem  \ref{smoothing 1}]
First we prove estimate  \eqref{approx 1}, saying that
for any bounded subset $\mathcal B$ of $H^s_{r,0}$ with $s \ge 0$, there exists $M_s > 0$ so that
\begin{equation}\label{approx wL}
\sup_{t\in \R}\Vert w(t)-w_{L}(t)\Vert_{s+\sigma (s)}\leq M_s \la t \ra  \,  , \qquad  \forall \, u_0 \in \mathcal B \, .
\end{equation}
To this end note that by \eqref{formula frequencies},
$$
|e^{it\omega_n}-e^{it(n^2-\la u_0^2\vert 1\ra )}| \leq 
2|t| \sum_{k>n}(k-n) \gamma_k(u_0) 
\leq \frac{C|t|}{n^{2s}} \, ,
$$
where the constant $C > 0$ can be chosen uniformly for $u_0\in \mathcal B$. 
Combined with \eqref{approx wL*fourier}, estimate  \eqref{approx wL} follows.
Estimate \eqref{smoothing 1 of u by w_L B} can be proved in a similar way 
as the estimate \eqref{smoothing 2 of u by w_L B} 
in the proof of Theorem \ref{smoothing 2} and hence we omit the details. 

It remains to prove the optimality statement of Theorem \ref{smoothing 1}. Let $0 < s < 1/2$, and consider $u_0\in H^s_{r,0}$ with the property that
\begin{equation}\label{gammacritique}
\gamma_n(u_0)=\frac{1}{n^{2+2s}[\log(1+n)]^2} \, , \qquad \forall \, n\geq 1 \, .  
\end{equation}
(Such $u_0$ exist since by \cite[Proposition 5 in Appendix A]{GKT1}, $\Phi : H^s_{r,0}\to \h^{s+1/2}$ is onto.)
By the definition \eqref{sigma}, one has $\sigma(s) = 2s$.
Assume that there exist $t\ne 0$ and $\e >0$ so that 
\begin{equation}\label{assumption 1}
w(t)-w_L(t)\in H^{3s+\e}_+ \, . 
\end{equation}
Since $3s < 2s+1/2 = s+\tau (s)$,
estimate \eqref{approx 2} of Theorem \ref{smoothing 2} then implies that for $\e$ sufficiently small, 
\eqref{assumption 1} is equivalent to
\begin{equation}\label{to be contradicted}
w_{L,\ast}(t)-w_L(t)\in H^{3s+\e} \, . 
\end{equation}
By the above formulas \eqref{def w_Lbis}, \eqref{def w_L*bis}, this is equivalent to
$$
|e^{it\omega_n}-e^{it(n^2-\la u_0^2\vert 1\ra )}| \cdot |\widehat w_0(n)| = \h_n^{3s+\e} 
$$
(recall that $w_0 = \partial_x \Pi(e^{- i\partial_x^{-1}u_0}) $), or
\begin{equation}\label{estimate for w_0}
\big(\sum_{k>n}(k-n) \gamma_k(u_0) \big) |\widehat w_0(n)| =\h_n^{3s+\e} \,  ,
\end{equation}
where we used that for $n$ sufficiently large (cf. \eqref{formula frequencies}),
$$
\begin{aligned}
|e^{it\omega_n}-e^{it(n^2-\la u_0^2\vert 1\ra )}|  &= 2 \sum_{k>n}(k-n) \gamma_k(u_0) \,  \big| \int_0^t e^{is 2\sum_{k>n}(k-n) \gamma_k(u_0)} d s \big| \\
& \sim |t|  \sum_{k>n}(k-n) \gamma_k(u_0) \, .
\end{aligned}
$$
Since by \eqref{gammacritique},
\begin{eqnarray*}
\sum_{k>n}(k-n)\gamma_k(u_0)&\geq & \frac 12 \sum_{k>2n}k \gamma_k(u_0) \\
&\gtrsim & \frac{1}{n^{2s}[\log (n)]^2} \, ,
\end{eqnarray*}
it then follows from \eqref{estimate for w_0} that
$$
n^{s+\e} ( \log (n) )^{-2}|\hat w_0(n)|=\ell^2_n
$$
and hence $w_0\in H_+^{s+\delta }$ for any $\delta <\e $. On the other hand, by the definition of $w_0$,
\begin{eqnarray*}
i\Pi [e^{i \partial_x^{-1}u_0}w_0]&=&\Pi [e^{i\partial_x^{-1}u_0}\Pi (u_0e^{-i\partial_x^{-1}u_0})]\\
&=&\Pi u_0-\Pi [e^{i\partial_x^{-1}u_0}(\text{Id}-\Pi) (u_0e^{-i\partial_x^{-1}u_0})] \, .
\end{eqnarray*}
By Lemma \ref{Hankel}(iii) (with $\alpha = 1$, $\beta = s + 1/2$)
 it then follows that
$ i \Pi [e^{i \partial_x^{-1}u_0} w_0] = \Pi u_0 + H_+^{2s+1/2}$. 
Since $e^{i\partial_x^{-1}u_0}\in H_c^{1+s}$, $w_0\in H_+^{s+\delta }$, and hence
$ i \Pi [e^{i \partial_x^{-1}u_0} w_0]  \in H^{s+\delta}_+$
one concludes that $u_0\in H^{s+\delta}_{r,0}$.
Hence for any $\delta <\e $, 
$$
\sum_{n=1}^\infty n^{1+2(s+\delta) }\gamma_n(u_0)<\infty \, ,
$$
which is in contradiction to \eqref{gammacritique}. 
Therefore \eqref{to be contradicted} is false and hence so is \eqref{assumption 1}.

To finish the proof of Theorem \ref{smoothing 1}, assume that $r(t)$ is $H_r^{3s+\e }$.
Since by \eqref{smoothing 1 of u by w_L B} and \eqref{smoothing 2 of u by w_L B},
$$
r(t)-r_{\ast}(t)=2{\rm Re}[e^{i\partial_x^{-1}u(t)}(w_L(t)-w_{L,\ast}(t))] \,  ,
$$
estimate \eqref{smoothing 2 of u by w_L B} implies that for $\e >0$ sufficiently small 
 $$
 2{\rm Re} [e^{i\partial_x^{-1}u(t)}(w_L(t)-w_{L,\ast}(t)) ]\in H^{3s+\e} \, .
 $$
 Note that $ \Pi \big(  2{\rm Re} [e^{i\partial_x^{-1}u(t)}(w_L(t)-w_{L,\ast}(t)) ] \big) \in  H^{3s+\e}$ equals
 $$
 \Pi[e^{i\partial_x^{-1}u(t)} \big(w_L(t)-w_{L,\ast}(t) \big) ] 
 + \Pi[e^{-i\partial_x^{-1}u(t)} \big(\overline{w_L(t)}- \overline{w_{L,\ast}(t)} \big) ] \, .
 $$
 By Lemma \ref{Hankel}(iii) (with $\alpha = 1$ and $\beta= s +1/2$), one then concludes that
 for $\e$ sufficiently small
 $$
 T_{e^{i\partial_x^{-1}u(t)}} [w_L(t)-w_{L,\ast }(t)  ]
 = \Pi [e^{i\partial_x^{-1}u(t)}(w_L(t)-w_{L,\ast }(t) ) ] \in H_+^{3s+\e} \, .
 $$
 Since the Toeplitz operator $T_{e^{i\partial_x^{-1}u(t)}} :  H_+^{3s+\e} \to  H_+^{3s+\e}$ is a linear isomorphism 
%
 one obtains
 $$ w_L(t)-w_{L,\ast }(t)\in H^{3s+\e} \, ,
 $$
 which contradicts the above conclusion that  \eqref{to be contradicted} is false. 
 
 This finishes the proof of Theorem \ref{smoothing 1}.
\end{proof}
{ \begin{proof}[Proof of Corollary \ref{smoothing of u by w_L}]
The claimed results can be proved in a similar way as 
Theorem  \ref{smoothing 1} and hence we leave the details of the proof to the reader.
\end{proof}


\section{Benjamin--Ono flow and H\"older spaces}\label{BO on Hoelder}

As an illustration of possible applications of Theorem \ref{smoothing 1}, we show 
in this section how this theorem can be used to study the action of the Benjamin--Ono flow $\mathcal S(t)$ on H\"older 
spaces $C^{\alpha}(\T, \R)$, $1/2 < \alpha < 1$.
The main result of this section is Proposition \ref{BO on C^alpha} below, which states that
for almost any time $t \in \R$ and any $1/2 < \alpha < 1$, $\mathcal S(t)$ does not map 
$C^{\alpha -}(\T, \R)$ into $\cup_{\e >0}C^{\alpha -1/2+\e} (\T, \R)$, whereas
by the Sobolev embedding theorem, $\mathcal S(t)$ maps 
$C^{\alpha -}(\T, \R)$ into $C^{(\alpha -1/2)-} (\T, \R)$.
Here for any $0 < \beta < 1$,
$$
C^{\beta-}(\T, \C):= \bigcap_{\gamma < \beta} C^\gamma (\T, \C) \, .
$$
\indent
First we need make some preliminary considerations.
We begin by reviewing a result on the flow map of the linear Schr\"odinger equation on $\T$,
$$
- i \partial_t \psi = \partial^2_x  \psi \, ,
$$
related to the Talbot effect -- see \cite[Section 2.3 and references therein]{ET3}.
First we need to introduce some additional notation.
For any $0 <  \alpha < 1$, we denote by $C^\alpha (\T, \C)$ the Banach space of $\alpha$-H\"older continuous 
functions $\psi: \T \to \C$, endowed with the standard norm,
$$
\|\psi\|_{C^\alpha} := \sup_{x \in \T} |\psi(x)| + \sup_{x\ne y}\frac{| \psi(y) - \psi(x) |}{ (d(x,y)) ^\alpha} \, , 
$$
where  $ d(x,y)$ denotes the distance between $x$ and $y$ in $\T$.
\begin{theorem}\label{smoothingschrod} (\cite[Theorem 2.16]{ET3})
There exists a subset $N\subset \R $ of Lebesgue measure $0$ so that for any $t\notin N$
and any function $\psi : \T \to \C$ of bounded variation,
${\rm e}^{it\partial_x^2} \psi \in C^{1/2-}(\T, \C)$.
\end{theorem}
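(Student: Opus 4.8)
The plan is to reduce the H\"older regularity of $\psi(t):={\rm e}^{it\partial_x^2}\psi$ to a uniform-in-$x$ decay estimate for its Littlewood--Paley pieces, and then to an a.e.-in-$t$ estimate for incomplete quadratic Weyl sums. Writing $\psi(t,x)=\sum_{n\in\Z}\widehat\psi(n)\,{\rm e}^{-itn^2}{\rm e}^{inx}$ and using the classical identification $C^\alpha(\T,\C)=B^\alpha_{\infty,\infty}(\T,\C)$ for $0<\alpha<1$, it suffices to show: for every $\e>0$ there is a full measure set $\R\setminus E_\e$ such that for $t\notin E_\e$ one has $\|P_N\psi(t)\|_{L^\infty_x}\le C_{t,\e}\,N^{-1/2+\e}$ for every dyadic $N\ge1$, where $P_N$ is the frequency projection onto $|n|\sim N$. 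Setting $E:=\bigcup_{k\ge1}E_{1/k}$ then gives ${\rm e}^{it\partial_x^2}\psi\in\bigcap_{\alpha<1/2}C^\alpha=C^{1/2-}$ for $t\notin E$, which is the assertion.

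To exhibit the gain, I would use that a function of bounded variation has distributional derivative a finite complex measure $d\mu$ with $\|\mu\|_{TV}\lesssim\|\psi\|_{BV}$, so that $\widehat\psi(n)=\tfrac1{in}\widehat{d\mu}(n)$ for $n\ne0$ with $\sup_n|\widehat{d\mu}(n)|\lesssim\|\mu\|_{TV}$. Substituting $\widehat{d\mu}(n)=\tfrac1{2\pi}\int_\T{\rm e}^{-iny}\,d\mu(y)$ into $P_N\psi(t,x)$ and interchanging sum and integral, the positive-frequency part of $P_N\psi(t,x)$ equals $\tfrac1{2\pi i}\int_\T\big(\sum_{N\le n<2N}\tfrac1n{\rm e}^{-itn^2}{\rm e}^{in(x-y)}\big)\,d\mu(y)$, with an analogous formula for the negative frequencies. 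Hence $\|P_N\psi(t)\|_{L^\infty_x}\lesssim\|\mu\|_{TV}\,\sup_{\theta\in\R}\big|\sum_{N\le n<2N}\tfrac1n\,{\rm e}^{-itn^2}{\rm e}^{in\theta}\big|$. An Abel summation in $n$, the weight $1/n$ being monotone with total variation $\sim1/N$ on $[N,2N)$, bounds the last quantity by $\tfrac{C}{N}\sup_{\theta}\max_{N\le M<2N}\big|\sum_{n=N}^{M}{\rm e}^{-itn^2}{\rm e}^{in\theta}\big|$. Thus everything is reduced to controlling incomplete quadratic Weyl sums with an arbitrary linear phase. Note that the crucial point is that, after factoring out $d\mu$, the residual weight $1/n$ is smooth, so no oscillation is wasted.

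The key input is then the classical metric estimate: for almost every $t\in\R$ and every $\e>0$ there is $C_{t,\e}$ with $\sup_{\theta\in\R}\big|\sum_{n=1}^{L}{\rm e}^{i(tn^2+n\theta)}\big|\le C_{t,\e}\,L^{1/2+\e}$ for all $L\ge1$; by subtracting two such sums, the same bound (with $L$ replaced by $2N$) holds for the incomplete sums $\sum_{n=N}^{M}$ with $N\le M<2N$. This follows from Weyl's inequality $\big|\sum_{n\le L}{\rm e}^{i(tn^2+n\theta)}\big|\lesssim L/\sqrt q+\sqrt q\log q$ whenever $|t/2\pi-a/q|\le1/q^2$, $(a,q)=1$ — uniformly in $\theta$ — combined with the fact that for a.e.\ $t$ the partial quotients of $t/2\pi$ grow sub-polynomially in the convergents' denominators (Borel--Bernstein/Khinchin), so that $q_{k+1}\le q_k^{1+\e}$ for $k$ large and hence for each $L$ one may choose an approximating denominator $q\in[L^{1-\e},L]$. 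Feeding $N^{1/2+\e}$ back into the previous step yields $\|P_N\psi(t)\|_{L^\infty_x}\le C_{t,\e}\,N^{-1/2+\e}$ for $t$ outside a null set, completing the argument.

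The main obstacle is the last paragraph: establishing the uniform-in-$\theta$, a.e.-in-$t$ quadratic Weyl sum bound with the square-root-type exponent, including the passage to incomplete sums and the metric number theory on the continued fraction expansion of $t/2\pi$. By contrast, the Besov characterization of $C^{1/2-}$, the representation of the Fourier coefficients of a $BV$ function through $d\mu$, and the Abel summation are all routine, so I would treat them briefly and concentrate the exposition on the Weyl sum estimate.
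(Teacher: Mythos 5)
Your argument is correct, and it is essentially the argument the paper relies on: Theorem \ref{smoothingschrod} is not proved in the paper but quoted from \cite[Theorem 2.16]{ET3}, whose proof runs exactly along your lines — the $O(1/n)$ Fourier coefficients of a $BV$ function and summation by parts reduce the $L^\infty$ bound on Littlewood--Paley blocks of ${\rm e}^{it\partial_x^2}\psi$ to incomplete quadratic Weyl sums with an arbitrary linear phase, which are then controlled by Weyl's inequality together with the metric fact that for a.e.\ $t$ the continued-fraction denominators of $t/2\pi$ satisfy $q_{k+1}\le q_k^{1+\e}$ eventually. In particular, your exceptional set depends only on the Diophantine properties of $t/2\pi$ and hence is independent of $\psi$, which is precisely the point recorded in the remark following the theorem.
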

\begin{remark}
It follows from the proof of Theorem 2.16 in \cite{ET3} (cf. p. 37--39) that 
 the set $N$ of Theorem \ref{smoothingschrod} can be chosen independently of $\psi$.
 \end{remark}
Theorem \ref{smoothingschrod} can be used to analyze the action of the flow map  ${\rm e}^{it\partial_x^2}$
on $C^{1/2-}(\T, \C)$. To state our result we make the following preliminary considerations.
The $\C$-vector space $C^{1/2-}(\T, \C)$ is endowed with the countable family of the norms of $C^{1/2-1/p} (\T, \C)$, $p \in \Z_{\ge 3}$.
In this way, $C^{1/2-}(\T, \C)$ becomes a Fr\'echet space with the property that $C^\infty (\T, \C)$ is dense in $C^{1/2-}(\T, \C)$, 
since $C^\infty (\T, \C)$ is dense in $C^\alpha (\T, \C)$, when considered with the norm of $C^{\alpha -\e}(\T, \C)$, $\e > 0$.
\begin{corollary}\label{schr on hoelder}
Let $N$ be the set of Lebesgue measure zero of Theorem \ref{smoothingschrod}.
Then for any $t\notin N':= - N $, 
${\rm e}^{it\partial_x^2}$ does not map $C^{1/2-}(\T, \C)$ into $L^\infty (\T, \C)$.
\end{corollary}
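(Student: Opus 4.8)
The plan is to argue by contradiction, using the closed graph theorem to reduce the statement to an arithmetic property of the Schr\"odinger kernel at time $t$, which is then ruled out by the exponential-sum estimates underlying Theorem~\ref{smoothingschrod}.

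First I would suppose that for some $t_0\notin N'$, equivalently $-t_0\notin N$, the operator $e^{it_0\partial_x^2}$ maps $C^{1/2-}(\T,\C)$ into $L^\infty(\T,\C)$. Since $C^{1/2-}(\T,\C)$ is a Fr\'echet space — the countable intersection of the Banach spaces $C^{1/2-1/p}(\T,\C)$, $p\in\Z_{\ge 3}$ — while $L^\infty(\T,\C)$ is a Banach space, and since the graph of the restricted map $e^{it_0\partial_x^2}\colon C^{1/2-}(\T,\C)\to L^\infty(\T,\C)$ is closed (both spaces embed continuously into $L^2(\T,\C)$, on which $e^{it_0\partial_x^2}$ is continuous), the closed graph theorem gives that this map is continuous. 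Hence there are $\beta\in(0,1/2)$ and $C>0$ such that
$$
\|e^{it_0\partial_x^2}f\|_{L^\infty}\le C\,\|f\|_{C^\beta}\,,\qquad f\in C^{1/2-}(\T,\C)\,,
$$
in particular for every trigonometric polynomial $f$.

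Next I would dualize this to the convolution kernel $K:=\sum_{n\in\Z}e^{-it_0 n^2}e^{inx}$ of $e^{it_0\partial_x^2}$, for which $e^{it_0\partial_x^2}f=K*f$. By translation invariance the displayed bound is equivalent to saying that $K$ defines a bounded functional on the closure $c^\beta(\T,\C)$ of the trigonometric polynomials in $C^\beta(\T,\C)$, i.e. $K\in B^{-\beta}_{1,1}(\T,\C)$, or equivalently $\sum_{j\ge 0}2^{-j\beta}\|P_jK\|_{L^1}<\infty$, where $P_j$ projects onto frequencies of size $\asymp 2^j$. Here $P_jK(x)=\sum_n\phi(2^{-j}n)e^{-it_0 n^2}e^{inx}$ for a fixed smooth cutoff $\phi$, so $\|P_jK\|_{L^2}\asymp 2^{j/2}$. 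The key input is that for $-t_0\notin N$ the quadratic Weyl sums enjoy square-root cancellation, $\sup_{x\in\T}\big|\sum_{|n|\le 2^j}e^{-it_0 n^2}e^{inx}\big|\lesssim_\e 2^{j(1/2+\e)}$ for every $\e>0$, whence $\|P_jK\|_{L^\infty}\lesssim_\e 2^{j(1/2+\e)}$; this is precisely the cancellation driving the proof of Theorem~\ref{smoothingschrod} in \cite{ET3}, whose exceptional set may be taken to be $N$ by the remark following Theorem~\ref{smoothingschrod}, and replacing $-t_0$ by $t_0$ only conjugates and reflects these sums, so does not change their size. Then $\|P_jK\|_{L^2}^2\le\|P_jK\|_{L^1}\|P_jK\|_{L^\infty}$ forces $\|P_jK\|_{L^1}\gtrsim_\e 2^{j(1/2-\e)}$, so that $\sum_j 2^{-j\beta}\|P_jK\|_{L^1}=+\infty$ as soon as $\e<1/2-\beta$; this contradicts $K\in B^{-\beta}_{1,1}(\T,\C)$, which finishes the proof.

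The hard part is the last step: translating Theorem~\ref{smoothingschrod}, phrased for data of bounded variation, into the square-root cancellation of the sums $\sum_{n\asymp 2^j}e^{-it_0 n^2}e^{inx}$ that controls the $L^1$-size of the Littlewood--Paley pieces of the Schr\"odinger kernel, while checking that the exceptional set involved is exactly $N'=-N$. The remaining ingredients are routine once one has recorded the completeness of the Fr\'echet space $C^{1/2-}(\T,\C)$ and the duality $\big(c^\beta(\T,\C)\big)^*=B^{-\beta}_{1,1}(\T,\C)$.
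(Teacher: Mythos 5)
Your argument is a genuinely different, and much heavier, route than the one in the paper, and as written it contains one unproven step. The paper's proof is soft: after the same closed graph argument, it uses the density of $C^\infty(\T,\C)$ in the Fr\'echet space $C^{1/2-}(\T,\C)$ to conclude that the (hypothetically bounded) operator ${\rm e}^{it\partial_x^2}$ would map $C^{1/2-}(\T,\C)$ into the closure of $C^\infty(\T,\C)$ in $L^\infty(\T,\C)$, i.e.\ into functions almost everywhere equal to continuous ones; it then takes a \emph{discontinuous} function $\psi$ of bounded variation (a step function), notes that $\phi:={\rm e}^{-it\partial_x^2}\psi\in C^{1/2-}(\T,\C)$ by Theorem \ref{smoothingschrod} since $-t\notin N$, and gets a contradiction from ${\rm e}^{it\partial_x^2}\phi=\psi$. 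No Weyl sums, no Besov duality, no kernel analysis are needed; only the statement of Theorem \ref{smoothingschrod} and the Remark.

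The gap in your version is exactly the step you flag as "the hard part": the dyadic square-root cancellation $\|P_jK\|_{L^\infty}\lesssim_\e 2^{j(1/2+\e)}$ off the set $N$ is not contained in the statement of Theorem \ref{smoothingschrod}, and the Remark only says that the exceptional set of the \emph{smoothing conclusion} is independent of $\psi$; it does not transfer the exceptional set of the exponential-sum estimates inside the proof in \cite{ET3} to $N$, so "may be taken to be $N$" is an assertion, not an argument. The step can, however, be closed using only the statement: apply Theorem \ref{smoothingschrod} at time $-t_0\notin N$ to the BV sawtooth $\psi(x)=\sum_{n\ne 0}e^{inx}/(in)$, so that $\|P_j({\rm e}^{-it_0\partial_x^2}\psi)\|_{L^\infty}\lesssim_\e 2^{-j(1/2-\e)}$; then apply the Fourier multiplier with symbol $2^{-j}(in)\tilde\phi(2^{-j}n)$ (a rescaled fixed bump, hence with $O(1)$ operator norm on $L^\infty$ uniformly in $j$) and multiply by $2^j$ to obtain $\sup_x\big|\sum_n\phi(2^{-j}n)e^{it_0n^2+inx}\big|\lesssim_\e 2^{j(1/2+\e)}$; conjugation together with $x\mapsto -x$ turns this into the bound for $P_jK$, $K=\sum_n e^{-it_0n^2}e^{inx}$, that you need. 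With that lemma supplied, the rest of your argument (closed graph on the Fr\'echet space, the duality with $B^{-\beta}_{1,1}$ --- or, more simply, testing your bound directly against $f=\overline{P_jK(-\cdot)}$, which avoids the duality altogether --- and $\|P_jK\|_{L^2}^2\le\|P_jK\|_{L^1}\|P_jK\|_{L^\infty}$) is correct and yields the corollary, at the price of a quantitative detour that the paper's softer argument avoids.
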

\begin{proof} 
Let $t\notin N'$ and 
suppose that for any $\psi \in C^{1/2-}(\T, \C)$, ${\rm e}^{it\partial_x^2}\psi \in L^\infty (\T, \C)$. 
Since for any $n \in \Z$,
$\widehat{{\rm e}^{it\partial_x^2} \psi}(n) = {\rm e}^{-it n^2}  \widehat{\psi}(n)$,
it then easily follows from the closed graph theorem that the linear map 
$$
{\rm e}^{it\partial_x^2} : C^{1/2-}(\T, \C)\to L^\infty (\T, \C)
$$
is continuous. 
In particular, since $C^\infty (\T, \C)$ is dense in $C^{1/2-}(\T, \C)$ by the considerations above, 
one concludes that the image of $C^{1/2-}(\T, \C)$ by ${\rm e}^{it\partial_x^2}$ is contained 
in the closure of $C^\infty (\T, \C)$ in $L^\infty (\T, \C)$. 
As a consequence, the image of $C^{1/2-}(\T, \C)$ by ${\rm e}^{it\partial_x^2}$ consists of functions 
which are almost everywhere equal  to a continuous function. 

Consider a function $\psi$  of bounded variation, which is not continuous, e.g.,  a step function. 
Then $\psi$ is not almost everywhere equal to a continuous function. 
On the other hand, since $-t \notin N$, it follows from Theorem \ref{smoothingschrod} that
$$
\phi :={\rm e}^{-it\partial_x^2} \psi \in C^{1/2-}(\T, \C) \, ,
$$
and hence in contradiction to our choice of $\psi$, ${\rm e}^{it\partial_x^2} \phi = \psi$ would have to be 
almost everywhere equal to a continuous function.
\end{proof}
Let us now turn to the flow map $\mathcal S(t)$ of the Benjamin-Ono equation.
In view of Corollary \ref{schr on hoelder}, we begin by studying the action of $\mathcal S(t)$
on $C^{1/2-}(\T, \C)$.
To this end we need to establish the following auxilary result on the action of $\mathcal S(t)$ 
on the Besov space $B^1_{1,1}(\T, \R)$. Recall that for any $s \ge 0$ and $p \ge 1$, $B^{s}_{p,1}(\T, \mathbb K)$, $\mathbb K \in \{ \R, \C\}$,
is a Banach space, endowed with the norm
\begin{equation}\label{Besov space norm}
\| f\|_{B^s_{p,1}} :=  \sum_{j \ge 0} 2^{s j} \|P_j f \|_{L^p} ,
\end{equation}
where $P_j$, $j \ge 0$, are Littlewood-Paley projections (cf. e.g. \cite[Apppendix 2.6]{P}).
Note that elements in $B^1_{1,1}(\T, \R)$ are absolutely continuous and hence of bounded variation.
Furthermore, $B^1_{1,1}(\T, \R)$ is a Banach algebra.
\begin{lemma} \label{smoothingBO} 
Let $N$ be the set of Lebesgue measure zero of Theorem \ref{smoothingschrod}.
Then for any $t \notin N$ and any $u_0\in B^1_{1,1}(\T, \R)$ with $\la u_0 \, | \,1\ra = 0$, 
$\mathcal S(t,u_0)\in C^{1/2-}(\T, \R)$.
\end{lemma}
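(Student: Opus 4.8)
The plan is to combine the approximation~\eqref{smoothing 1 of u by w_L B} of Theorem~\ref{smoothing 1} with the Talbot-type smoothing of Theorem~\ref{smoothingschrod}. Since $B^1_{1,1}(\T,\R)\hookrightarrow H^{1/2}(\T,\R)$, the hypothesis $\la u_0\,|\,1\ra=0$ places $u_0$ in $H^{1/2}_{r,0}$, so $u(t)=\mathcal S(t,u_0)$ is well defined and lies in $H^{1/2}_{r,0}$ for every $t$. Applying Theorem~\ref{smoothing 1} with $s=1/2$, for which $\sigma(1/2)=1-$, we obtain $u(t)=2\,{\rm Re}\big(e^{i\partial_x^{-1}u(t)}\,iw_L(t)\big)+r(t)$ with $r(t)\in H^{3/2-\e}_r$ for every $\e>0$; by the Sobolev embedding $H^{3/2-\e}(\T)\hookrightarrow C^{1-\e}(\T)$ this gives $r(t)\in C^{1/2-}(\T,\R)$. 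Hence the statement reduces to showing that the leading term $2\,{\rm Re}\big(e^{i\partial_x^{-1}u(t)}\,iw_L(t)\big)$ belongs to $C^{1/2-}(\T,\R)$ whenever $t\notin N$.

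Next I would treat the amplitude factor and $w_L(t)$ separately. Since $\partial_x^{-1}u(t)\in H^{3/2}_{r,0}\hookrightarrow C^\alpha(\T,\R)$ for every $\alpha<1$, also $e^{i\partial_x^{-1}u(t)}\in C^\alpha(\T,\C)$ for every $\alpha<1$; as $C^\beta(\T)$ is a Banach algebra for $0\le\beta<1$, multiplication by $e^{i\partial_x^{-1}u(t)}$ maps $C^{1/2-}(\T,\C)$ into itself, so it suffices to prove $w_L(t)\in C^{1/2-}(\T,\C)$. By the definition~\eqref{def w_L} of $w_L$ and the convention $\widehat{e^{it\partial_x^2}\psi}(n)=e^{-itn^2}\widehat\psi(n)$ used in Theorem~\ref{smoothingschrod}, one has $w_L(t)=e^{-it\la u_0^2|1\ra}\,e^{-it\partial_x^2}w_0$, where $w_0=\mathcal G(u_0)=\partial_x\Pi(e^{-i\partial_x^{-1}u_0})$. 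Thus it is enough to show that $w_0$ is of bounded variation: writing $e^{-it\partial_x^2}w_0=\overline{e^{it\partial_x^2}\overline{w_0}}$, applying Theorem~\ref{smoothingschrod} to the bounded variation function $\overline{w_0}$ at time $t\notin N$, and using that complex conjugation preserves the H\"older class $C^{1/2-}$, one gets $e^{-it\partial_x^2}w_0\in C^{1/2-}(\T,\C)$.

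To establish $w_0\in{\rm BV}(\T,\C)$ I would track the regularity of $u_0$ through the gauge transform on the Besov scale $B^s_{1,1}$, using $B^1_{1,1}(\T)\hookrightarrow W^{1,1}(\T)\subset{\rm BV}(\T)$. Because $u_0$ has mean zero, $\partial_x^{-1}u_0\in B^2_{1,1}(\T,\R)$; since $B^2_{1,1}(\T)$ is a Banach algebra, the exponential series $\sum_{k\ge0}(-i\partial_x^{-1}u_0)^k/k!$ converges in $B^2_{1,1}$, so $e^{-i\partial_x^{-1}u_0}\in B^2_{1,1}(\T,\C)$; the Szeg\H{o} projection $\Pi$ is a bounded Fourier multiplier on $B^2_{1,1}$; and $\partial_x:B^2_{1,1}\to B^1_{1,1}$ is bounded. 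Hence $w_0\in B^1_{1,1}(\T,\C)$, and similarly $\overline{w_0}$. Feeding this into the previous step yields $w_L(t)\in C^{1/2-}(\T,\C)$, hence $2\,{\rm Re}\big(e^{i\partial_x^{-1}u(t)}\,iw_L(t)\big)\in C^{1/2-}(\T,\R)$, hence $u(t)\in C^{1/2-}(\T,\R)$ for $t\notin N$, which is the claim.

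The one point that requires genuine care is the preservation of bounded variation under Tao's gauge transform, i.e.\ the chain $u_0\mapsto\partial_x^{-1}u_0\mapsto e^{-i\partial_x^{-1}u_0}\mapsto\Pi e^{-i\partial_x^{-1}u_0}\mapsto w_0$ read in the algebra $B^2_{1,1}$; once $w_0\in B^1_{1,1}$ is in hand, the remaining ingredients --- Theorem~\ref{smoothing 1} with $s=1/2$, the Sobolev embedding, the Banach-algebra property of H\"older spaces, and Theorem~\ref{smoothingschrod} --- are assembled routinely.
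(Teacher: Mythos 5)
Your proof is correct and follows essentially the same route as the paper: show that $w_0=\mathcal G(u_0)$ lies in $B^1_{1,1}(\T,\C)\subset \mathrm{BV}$ via the Besov algebra property and the boundedness of $\Pi$, apply Theorem \ref{smoothingschrod} to get $w_L(t)\in C^{1/2-}(\T,\C)$ for $t\notin N$, and combine this with Theorem \ref{smoothing 1} at $s=1/2$ together with the Sobolev embedding for the amplitude factor and the remainder. The only cosmetic differences are that you run the gauge transform through the algebra $B^2_{1,1}$ rather than using $w_0=-i\Pi(u_0e^{-i\partial_x^{-1}u_0})$ directly in $B^1_{1,1}$, and that you make the sign convention for $e^{it\partial_x^2}$ explicit via the conjugation $e^{-it\partial_x^2}w_0=\overline{e^{it\partial_x^2}\overline{w_0}}$.
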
 
\begin{proof} For any given  $u_0\in B^1_{1,1}(\T, \R)$ with $\la u_0 \, | \,1\ra = 0$, let
$w_0:= -i\Pi (u_0{\rm e}^{-i\partial_x^{-1}u_0})$.
Note that the Szeg\H{o} projection $\Pi$ maps $B^1_{1,1}(\T, \C)$ into itself (cf. e.g. \cite[Apppendix 2.6]{P})
and hence  $w_0\in B^1_{1,1}(\T, \C)$. 
Since any function in $B^1_{1,1}(\T, \C)$ is of bounded variation, 
it then follows by Theorem \ref{smoothingschrod} that for any $t \notin N$, 
$$
w_L(t):={\rm e}^{ it(\partial_x^2+\la u_0^2 | 1 \ra)} w_0 \in C^{1/2-}(\T, \C) \, .
$$
Furthermore, $B^1_{1,1}(\T, \R)\subset B^{1/2}_{2,1}(\T, \R)\subset H^{1/2}(\T, \R)$, and therefore
$$
u(t):=\mathcal S(t,u_0)\in H^{1/2}_r \, ,\qquad 
{\rm e}^{i\partial_x^{-1}u(t)}\in H^{3/2}_r \subset \bigcap _{\alpha <1}C^\alpha (\T, \R) \, .
$$
As a consequence, by Theorem \ref{smoothing 1},
$$
u(t)=2{\rm Re}\big({\rm e}^{i\partial_x^{-1}u(t)}iw_L(t) \big)+r(t) \, ,
$$
where 
$$
r(t)\in H^{3/2 -}_r\subset \bigcap _{\alpha <1}C^\alpha (\T, \R) \, .
$$
Altogether we thus proved that $u(t)\in C^{1/2 -}(\T, \R)$. 
\end{proof}
As already advertised above,
the following proposition states a result on the action of the  BO flow map on $C^{1/2-}(\T, \R)$.
It should be compared with the result of Corollary \ref{schr on hoelder} on the action of 
the flow map ${\rm e}^{it\partial_x^2}$ of the linear Schr\"odinger equation on $C^{1/2-}(\T, \C)$.
\begin{proposition}\label{BO on C^{1/2}}
Let $N$ be the set of Lebesgue measure zero of Theorem \ref{smoothingschrod}.
Then for any $t\notin N':= -N$,
 $\mathcal S(t)$ does not map $C^{1/2-}(\T, \R)$ into $\cup_{\e >0}C^\e (\T, \R)$. 
\end{proposition}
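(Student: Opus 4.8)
The plan is to follow the pattern of the proof of Corollary~\ref{schr on hoelder}, replacing the linear Schr\"odinger flow by the Benjamin--Ono flow $\mathcal S(t)$ and the closed graph argument by the group property $\mathcal S(t)\circ\mathcal S(-t)=\mathrm{id}$. Fix $t\notin N'$, so that $-t\notin N$, and argue by contradiction: assume that $\mathcal S(t)$ maps $C^{1/2-}(\T,\R)$ into $\cup_{\e>0}C^\e(\T,\R)$.

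First I would take an arbitrary $u_0\in B^1_{1,1}(\T,\R)$ with $\la u_0\,|\,1\ra=0$. Since $B^1_{1,1}(\T,\R)\subset B^{1/2}_{2,1}(\T,\R)\subset H^{1/2}_r$, one has $u_0\in H^{1/2}_{r,0}$, so the solution $v_0:=\mathcal S(-t,u_0)$ is well defined, and because $-t\notin N$, Lemma~\ref{smoothingBO} (applied with $-t$ in place of $t$) yields $v_0\in C^{1/2-}(\T,\R)$. By the group property of the Benjamin--Ono flow, $\mathcal S(t,v_0)=u_0$; hence the contradiction hypothesis would force $u_0\in\cup_{\e>0}C^\e(\T,\R)$. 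In other words, the hypothesis would imply that every zero-mean element of $B^1_{1,1}(\T,\R)$ is H\"older continuous of some positive order.

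To obtain a contradiction I would exhibit a zero-mean function in $B^1_{1,1}(\T,\R)$ that is H\"older continuous of no positive order. A concrete choice is
$$
u_0(x):=\sum_{n\ge 2}\frac{\cos(nx)}{n\,[\log(1+n)]^{3}}\, .
$$
On the one hand, for $j\ge 1$ the Littlewood--Paley block $P_ju_0$ is obtained by applying to a standard Littlewood--Paley block (of $L^1$-norm $O(1)$ uniformly in $j$) a Fourier multiplier of amplitude $\asymp 2^{-j}j^{-3}$ whose symbol, rescaled to the unit frequency scale, is smooth with all $C^k$-norms bounded uniformly in $j$; hence $\|P_ju_0\|_{L^1}\lesssim 2^{-j}j^{-3}$ and $\sum_{j\ge 0}2^{j}\|P_ju_0\|_{L^1}<\infty$, i.e. $u_0\in B^1_{1,1}(\T,\R)$. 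On the other hand, $u_0(0)-u_0(x)=\sum_{n\ge 2}\frac{1-\cos(nx)}{n\,[\log(1+n)]^{3}}$ is a sum of nonnegative terms, and a routine estimate (bounding the whole sum from above, and from below by the contribution of the frequencies $n\gtrsim 1/|x|$) shows that $u_0(0)-u_0(x)\asymp[\log(1/|x|)]^{-2}$ as $x\to 0^+$. Since $[\log(1/|x|)]^{-2}$ is not $O(|x|^\e)$ for any $\e>0$, we get $u_0\notin C^\e(\T,\R)$ for every $\e>0$, i.e. $u_0\notin\cup_{\e>0}C^\e(\T,\R)$. This contradicts the conclusion of the previous paragraph and proves the proposition.

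The bulk of this argument is soft, relying only on Lemma~\ref{smoothingBO}, the well-posedness and group property of the Benjamin--Ono flow on $H^{1/2}_{r,0}$, and the elementary identity $\mathcal S(t)^{-1}=\mathcal S(-t)$. The one point requiring care is the verification that the explicit $u_0$ above lies in $B^1_{1,1}$ while failing to be H\"older continuous of any positive order; this is a standard Littlewood--Paley / Fourier series computation, and is where I expect the only real (though routine) work to lie.
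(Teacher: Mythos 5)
Your argument is correct and is essentially the paper's own proof: both apply Lemma \ref{smoothingBO} at time $-t\notin N$ to a zero-mean $u_0\in B^1_{1,1}(\T,\R)$ lying in no $C^\e(\T,\R)$, and then use $\mathcal S(t,\mathcal S(-t,u_0))=u_0$ to conclude. The only difference is the explicit counterexample — you take $\sum_{n\ge 2}\cos(nx)/\big(n[\log(1+n)]^3\big)$, whose modulus of continuity is $\asymp [\log(1/|x|)]^{-2}$, whereas the paper uses the lacunary function ${\rm Re}\sum_{j}j^{-2}e^{i2^jx}\chi_j(x)$ — and your choice does lie in $B^1_{1,1}(\T,\R)$ while belonging to no $C^\e(\T,\R)$, so the proof goes through.
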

\begin{proof}
Given any $u_0\in B^1_{1,1}(\T, \R)$ with  $\la u_0 \, | \,1\ra = 0$, it follows from Lemma \ref{smoothingBO}
that for any  $t \notin  N'$ (and hence $-t \notin N$), $v:= \mathcal S(-t, u_0) \in C^{1/2-}(\T, \R)$.
Since $\mathcal S(t, v)=u_0$, the proposition is proved by choosing 
$$ 
u_0\in B^1_{1,1}(\T, \R)\setminus \bigcup_{\e >0}C^\e (\T, \R) \ .
$$
A possible choice is
$$
u_0(x)={\rm Re} \big(\sum_{j=0}^\infty \frac{1}{j^2}{\rm e}^{i2^j x}\chi_j(x) \big) \, ,  \qquad
 \chi_j(x):=\sum_{k\in \Z}\chi (2^j (x-2k\pi)) \, ,
 $$
where $\chi: \R \to \R$ is $C^\infty$-smooth with $\chi (0)=1$ and support contained in the open interval  $(-1/4,1/4)$.
\end{proof}
Let us now turn to the main result of this section, which concerns the action of the BO flow map on $C^{\alpha -}(\T, \R)$ with $1/2 < \alpha < 1$.
To prove this, we first extend  Theorem \ref{smoothingschrod} and Lemma \ref{smoothingBO}.
The extension of Theorem \ref{smoothingschrod} is obtained from the proof of the latter in a straightforward way
and reads as follows.
\begin{corollary}
Let $N$ be the set of Lebesgue measure zero of Theorem \ref{smoothingschrod}.
Then for any $t\notin N$ and any $1/2 < \alpha < 1$, 
$$
{\rm e}^{it\partial_x^2}: B^{\alpha +1/2}_{1,1}(\T, \C) \to C^{\alpha-}(\T, \C)  \, .
$$
\end{corollary}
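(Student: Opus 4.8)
The plan is to bootstrap from the bounded-variation case already established in Theorem \ref{smoothingschrod}, by absorbing a fractional derivative. Fix $1/2<\alpha<1$ and set $\beta:=\alpha-\tfrac12\in(0,\tfrac12)$. Denote by $\langle D\rangle^{\pm\beta}$ the Fourier multiplier operators with symbols $\langle n\rangle^{\pm\beta}$, $n\in\Z$, where $\langle n\rangle=\max\{1,|n|\}$. These act as expected on Besov spaces, namely $\langle D\rangle^{\beta}:B^{\sigma}_{1,1}(\T,\C)\to B^{\sigma-\beta}_{1,1}(\T,\C)$ for every $\sigma\ge\beta$, by standard Littlewood--Paley arguments (the symbol $\langle n\rangle^{\beta}$ is of order $\beta$). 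Hence, given $f\in B^{\alpha+1/2}_{1,1}(\T,\C)$, the function $g:=\langle D\rangle^{\beta}f$ lies in $B^{\alpha+1/2-\beta}_{1,1}(\T,\C)=B^{1}_{1,1}(\T,\C)$, and is therefore of bounded variation, as recalled in Section \ref{BO on Hoelder} (indeed $B^{1}_{1,1}(\T,\C)\hookrightarrow W^{1,1}(\T,\C)\hookrightarrow BV(\T,\C)$).

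Next I would apply Theorem \ref{smoothingschrod}, together with the Remark following it which guarantees that the exceptional set $N$ may be taken independent of the function: for every $t\notin N$ one has ${\rm e}^{it\partial_x^2}g\in C^{1/2-}(\T,\C)$. Since ${\rm e}^{it\partial_x^2}$ is itself a Fourier multiplier, it commutes with $\langle D\rangle^{-\beta}$, so that
$$
{\rm e}^{it\partial_x^2}f=\langle D\rangle^{-\beta}\,{\rm e}^{it\partial_x^2}g \, .
$$
The remaining ingredient is the smoothing property of the Bessel potential on H\"older spaces on $\T$: for $0<\gamma$ with $0<\gamma+\beta<1$, the operator $\langle D\rangle^{-\beta}$ maps $C^{\gamma}(\T,\C)$ boundedly into $C^{\gamma+\beta}(\T,\C)$ (the periodic analogue of the classical mapping property of Bessel potentials between H\"older--Zygmund spaces; in the present range $\gamma+\beta\in(0,1)$ is never an integer, so H\"older and Zygmund spaces coincide). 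Letting $\gamma$ run through $(\tfrac12-\e_0,\tfrac12)$ for small $\e_0>0$, so that $\gamma+\beta=\gamma+\alpha-\tfrac12$ exhausts $(\alpha-\e_0,\alpha)\subset(0,1)$, one obtains $\langle D\rangle^{-\beta}\big(C^{1/2-}(\T,\C)\big)\subset C^{\alpha-}(\T,\C)$. Combined with the displayed identity, this yields ${\rm e}^{it\partial_x^2}f\in C^{\alpha-}(\T,\C)$ for every $t\notin N$ and every $f\in B^{\alpha+1/2}_{1,1}(\T,\C)$, which is the assertion. Note that the loss of $\e$, i.e., the target $C^{\alpha-}$ rather than $C^{\alpha}$, is inherited directly from the $C^{1/2-}$ in Theorem \ref{smoothingschrod}, not from the Bessel step.

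There is no serious obstacle here; the extension is essentially bookkeeping. The points requiring (routine) care are: the Besov boundedness of $\langle D\rangle^{\beta}$ and the embedding $B^{1}_{1,1}\hookrightarrow BV$, both standard; the H\"older mapping property of $\langle D\rangle^{-\beta}$ on the torus, standard once one checks that the zero Fourier mode is harmless (it is, since $\langle D\rangle^{-\beta}$ is a bounded multiplier there as well); and, most importantly, the use of the Remark after Theorem \ref{smoothingschrod} so as to apply that theorem to $g=\langle D\rangle^{\beta}f$ with one and the same $N$ for all $f$. Alternatively, one may obtain the statement by re-running the Gauss-sum (Talbot effect) argument of \cite{ET3} directly with the additional $\beta$ derivatives of regularity; the bootstrap above seems cleaner.
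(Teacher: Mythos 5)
Your argument is correct and is essentially the paper's proof: both bootstrap from Theorem \ref{smoothingschrod} by applying a fractional derivative of order $\beta=\alpha-\tfrac12$ to land in $B^1_{1,1}\subset BV$, then commute the Schr\"odinger multiplier and recover $\beta$ H\"older regularity from the inverse multiplier. The only (cosmetic) difference is your use of the inhomogeneous multiplier $\langle D\rangle^{\pm\beta}$, whereas the paper uses $|D|^{\pm\beta}$ after splitting off the mean $\la \psi\,|\,1\ra$.
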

\begin{proof}
Let $\beta := \alpha -1/2$. Given $\psi \in B^{\alpha +1/2}_{1,1}(\T, \C)$, we write $\psi$ as
 $$
 \psi =\la \psi \,  | \, 1 \ra + |D|^{-\beta} \phi \, ,
 $$
 where $\phi := |D|^\beta \psi \in B^1_{1,1}(\T, \C)$. Then 
 $$
 {\rm e}^{it\partial_x^2} \psi = \la \psi \,  | \, 1 \ra + |D|^{-\beta} {\rm e}^{it\partial_x^2}\phi \in C^{\alpha-}(\T, \C)  \, .
 $$
 \end{proof}
 By the arguments used in its proof,  Lemma \ref{smoothingBO} can be extended as follows.
\begin{lemma}\label{smoothingBO alpha}
Let $N$ be the set of Lebesgue measure zero of Theorem \ref{smoothingschrod} and $1/2 < \alpha < 1$.
Then for any $t \notin N$ and any $u_0\in B^{\alpha +1/2}_{1,1}(\T, \R)$ with $\la u_0 \, | \,1\ra = 0$, 
$S(t,u_0)\in C^{\alpha-}(\T, \R)$.
\end{lemma}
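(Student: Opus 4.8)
The plan is to mimic the proof of Lemma~\ref{smoothingBO} line by line, systematically upgrading the regularity exponent from $1$ to $\alpha+1/2$. Given $u_0\in B^{\alpha+1/2}_{1,1}(\T,\R)$ with $\la u_0\,|\,1\ra=0$, I would set $w_0:=-i\Pi(u_0\,{\rm e}^{-i\partial_x^{-1}u_0})$ exactly as before, and the first step is to check that $w_0\in B^{\alpha+1/2}_{1,1}(\T,\C)$. Here one uses that, because $\alpha+1/2>1$, the space $B^{\alpha+1/2}_{1,1}(\T,\C)$ is a Banach algebra embedding continuously into $L^\infty$, that $\partial_x^{-1}$ maps it into $B^{\alpha+3/2}_{1,1}\subset B^{\alpha+1/2}_{1,1}$, that composition with the entire function ${\rm e}^{-i\,\cdot}$ therefore preserves it, and that $\Pi$ is bounded on $B^{\alpha+1/2}_{1,1}$ (cf. \cite[Appendix~2.6]{P}); hence $u_0\,{\rm e}^{-i\partial_x^{-1}u_0}\in B^{\alpha+1/2}_{1,1}$ and $w_0$ lies in the same space.

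Next, I would invoke the Corollary stated immediately above this lemma in place of Theorem~\ref{smoothingschrod}: for $t\notin N$, the linear Schr\"odinger evolution of $w_0$ -- which, as in the proof of Lemma~\ref{smoothingBO}, yields $w_L(t)$ up to the unimodular factor produced by the zeroth-order term $i\la u_0^2\,|\,1\ra w$ -- belongs to $C^{\alpha-}(\T,\C)$. At the same time $B^{\alpha+1/2}_{1,1}\hookrightarrow B^{\alpha}_{2,1}\hookrightarrow H^\alpha$, so $u_0\in H^\alpha_{r,0}$, hence $u(t):=\mathcal S(t,u_0)\in H^\alpha_{r,0}$ and $\partial_x^{-1}u(t)\in H^{\alpha+1}_{r,0}$; since $H^{\alpha+1}_c$ is an algebra embedding into $L^\infty$ and $\alpha+1>3/2$, Sobolev embedding gives ${\rm e}^{i\partial_x^{-1}u(t)}\in H^{\alpha+1}_r\subset C^1(\T,\R)\subset C^{\alpha-}(\T,\R)$, the last inclusion because $\alpha<1$.

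Finally, I would feed this into Theorem~\ref{smoothing 1} at regularity $s=\alpha$: since $1/2<\alpha<1$ one has $\sigma(\alpha)=1$ by \eqref{sigma}, so
\[
u(t)=2\,{\rm Re}\big({\rm e}^{i\partial_x^{-1}u(t)}\,i\,w_L(t)\big)+r(t)\,,\qquad r(t)\in H^{\alpha+1}_r\subset C^{\alpha-}(\T,\R)\,,
\]
again by Sobolev embedding. As $C^\gamma(\T,\C)$ is a Banach algebra for each $0<\gamma<1$ and both ${\rm e}^{i\partial_x^{-1}u(t)}$ and $w_L(t)$ lie in $C^\gamma$ for every $\gamma<\alpha$, the product ${\rm e}^{i\partial_x^{-1}u(t)}\,i\,w_L(t)$ is in $C^{\alpha-}(\T,\C)$, hence so is its real part, and adding $r(t)$ yields $\mathcal S(t,u_0)\in C^{\alpha-}(\T,\R)$.

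I do not expect any genuinely hard step: all of the analytic input is already present in the excerpt. The only things needing care are the bookkeeping verification that $w_0\in B^{\alpha+1/2}_{1,1}$ (which is exactly where the hypothesis $u_0\in B^{\alpha+1/2}_{1,1}$ is consumed, via the algebra and composition stability valid for $\alpha+1/2>1$), and matching the linear profile $w_L(t)$ of Theorem~\ref{smoothing 1} with the Schr\"odinger flow to which the Corollary above applies. As in Lemma~\ref{smoothingBO}, the entire H\"older smoothing is produced by the Talbot-type dispersive smoothing of ${\rm e}^{it\partial_x^2}$ off the exceptional set $N$, transported to the Benjamin--Ono flow through the gauge-transform representation \eqref{smoothing 1 of u by w_L B}.
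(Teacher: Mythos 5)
Your proposal is correct and is exactly the paper's argument: the paper proves Lemma \ref{smoothingBO alpha} simply ``by the arguments used in the proof of Lemma \ref{smoothingBO}'', i.e.\ by checking $w_0\in B^{\alpha+1/2}_{1,1}$, applying the Corollary extending Theorem \ref{smoothingschrod} to get $w_L(t)\in C^{\alpha-}$ for $t\notin N$, and using Theorem \ref{smoothing 1} at regularity $s=\alpha$ (where $\sigma(\alpha)=1$) together with the Sobolev embeddings $B^{\alpha+1/2}_{1,1}\subset H^\alpha$ and $H^{\alpha+1}\subset C^{\alpha-}$. Your write-up fills in these routine verifications accurately, so there is nothing to correct.
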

Now we are ready to state our result on the action of $\mathcal S(t)$ on $C^{\alpha -}(\T, \R)$.
\begin{proposition}\label{BO on C^alpha}
For any $1/2 < \alpha < 1$ the following holds.\\
(i)  For any $t\in \R$,
$\mathcal S(t)$ maps $C^{\alpha -}(\T, \R)$ continuously  into $C^{(\alpha -1/2)-}(\T, \R)$.\\
(ii) Let $N$ be the set of Lebesgue measure zero of Theorem \ref{smoothingschrod}.
Then for any $t \notin N':= - N$,
 $\mathcal S(t)$ does not map $C^{\alpha -}(\T, \R)$ into $\cup_{\e >0}C^{\alpha -1/2+\e} (\T, \R)$.
\end{proposition}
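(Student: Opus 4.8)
The plan is to treat the two parts separately. Part (i) is a soft consequence of Sobolev embeddings and the well-posedness of the flow and uses no smoothing at all. Part (ii) is the substantive assertion; it will follow the scheme of Proposition \ref{BO on C^{1/2}}, namely run the Benjamin--Ono flow backwards starting from a datum that sits exactly on the critical Besov scale yet is rough enough to escape every $C^{\alpha-1/2+\e}$.

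\emph{Part (i).} Fix $1/2<\alpha<1$ and $u_0\in C^{\alpha-}(\T,\R)$. For every $\alpha'$ with $1/2<\alpha'<\alpha$ one has the continuous inclusions $C^{\alpha-}(\T,\R)\hookrightarrow C^{\alpha'}(\T,\R)\hookrightarrow H^{\alpha'}(\T,\R)$; moreover $\mathcal S(t)$ maps $H^{\alpha'}_r$ continuously into itself (the case $\la u_0\,|\,1\ra\ne0$ being reduced to the mean-zero case by the invariance $v_a(t,x)=a+v(t,x-2at)$, which preserves both Hölder and Sobolev norms), and, since $\alpha'>1/2$, $H^{\alpha'}(\T,\R)\hookrightarrow C^{\alpha'-1/2}(\T,\R)$. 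Composing these and letting $\alpha'\uparrow\alpha$ yields $\mathcal S(t,u_0)\in\bigcap_{\alpha'<\alpha}C^{\alpha'-1/2}(\T,\R)=C^{(\alpha-1/2)-}(\T,\R)$, and the continuity between the two Fréchet spaces follows because each defining seminorm $\|\cdot\|_{C^{\alpha-1/2-1/p}}$ of the target is dominated along one such composition.

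\emph{Part (ii).} Let $t\notin N':=-N$, so $-t\notin N$. By Lemma \ref{smoothingBO alpha} applied with $-t$ in place of $t$, every $u_0\in B^{\alpha+1/2}_{1,1}(\T,\R)$ with $\la u_0\,|\,1\ra=0$ satisfies $v:=\mathcal S(-t,u_0)\in C^{\alpha-}(\T,\R)$, and $\mathcal S(t,v)=u_0$ by the group property of the flow. Hence it suffices to exhibit one such $u_0$ that does not belong to $\bigcup_{\e>0}C^{\alpha-1/2+\e}(\T,\R)$. Generalizing the example in the proof of Proposition \ref{BO on C^{1/2}} (which is the case $\alpha=1/2$), I would take
$$
u_0(x)={\rm Re}\Big(\sum_{j\ge1}\frac{2^{-(\alpha-1/2)j}}{j^2}\,e^{i2^jx}\chi_j(x)\Big)-m\,,\qquad \chi_j(x):=\sum_{k\in\Z}\chi(2^j(x-2k\pi))\,,
$$
where $\chi\in C^\infty(\R,[0,1])$ is supported in $(-1/4,1/4)$ with $\chi\equiv1$ on $[-1/8,1/8]$, and $m\in\R$ is the (finite, since the series is absolutely convergent in $L^1$) mean, subtracted to enforce $\la u_0\,|\,1\ra=0$.

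Two routine harmonic-analysis estimates, parallel to those already carried out for $\alpha=1/2$, then finish the argument. First, each bump $\psi_j:=e^{i2^jx}\chi_j$ is a rescaling $\phi(2^j\cdot)$ of a fixed Schwartz-type profile, frequency-localized near $2^j$ with $\|\psi_j\|_{L^1(\T)}\sim2^{-j}$ and rapidly decaying Littlewood--Paley tails, so $\|\psi_j\|_{B^{\alpha+1/2}_{1,1}}\lesssim2^{(\alpha-1/2)j}$, whence $u_0\in B^{\alpha+1/2}_{1,1}(\T,\R)$ because $\sum_j j^{-2}<\infty$. Second, choosing $h_J:=2^{-J}/8$ and using that $\chi_j(0)=1$ for all $j$, $\chi(1/8)=1$, and each term $1-\cos(2^jh_J)\chi_j(h_J)$ is nonnegative (as $0\le\chi\le1$), one gets
$$
u_0(0)-u_0(h_J)=\sum_{j\ge1}\frac{2^{-(\alpha-1/2)j}}{j^2}\big(1-\cos(2^jh_J)\chi_j(h_J)\big)\ \ge\ \frac{1-\cos(1/8)}{J^2}\,2^{-(\alpha-1/2)J}\,,
$$
so that $|u_0(0)-u_0(h_J)|/|h_J|^{\alpha-1/2+\e}\gtrsim2^{\e J}J^{-2}\to\infty$ for every $\e>0$, i.e.\ $u_0\notin C^{\alpha-1/2+\e}(\T,\R)$ for any $\e>0$. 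Combining, $v\in C^{\alpha-}(\T,\R)$ while $\mathcal S(t,v)=u_0\notin\bigcup_{\e>0}C^{\alpha-1/2+\e}(\T,\R)$, proving (ii). The main obstacle is exactly the construction of this datum: it must lie \emph{on} the endpoint space $B^{\alpha+1/2}_{1,1}$ so that Lemma \ref{smoothingBO alpha} applies and the backward evolution lands in $C^{\alpha-}$, and yet be rough enough to miss every $C^{\alpha-1/2+\e}$; the lacunary modulated-bump family above — bumps of width $\sim2^{-j}$ modulated by $e^{i2^jx}$, amplitudes tuned to the borderline of $B^{\alpha+1/2}_{1,1}$ — does both, and the verification reduces to the two estimates above, which are of the same nature as the computation already performed in Proposition \ref{BO on C^{1/2}}.
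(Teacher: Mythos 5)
Your argument is correct and follows essentially the same route as the paper: part (i) via the embeddings $C^{\alpha-}\subset H^{\alpha-}_r$, persistence of the flow in $H^{\alpha'}_r$, and the Sobolev embedding $H^{\alpha'}_r\subset C^{\alpha'-1/2}(\T,\R)$ for $\alpha'>1/2$; part (ii) by running the flow backwards from a datum in $B^{\alpha+1/2}_{1,1}(\T,\R)$ via Lemma \ref{smoothingBO alpha}, using exactly the lacunary modulated-bump example $\mathrm{Re}\big(\sum_j \frac{2^{-(\alpha-1/2)j}}{j^2}e^{i2^jx}\chi_j\big)$ that the paper proposes. The only differences are cosmetic refinements (subtracting the mean to enforce $\la u_0\,|\,1\ra=0$, and writing out the Besov and H\"older verifications the paper leaves implicit), which are fine.
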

\begin{proof}
(i) Note that for any $0 < \beta < 1$, $C^\beta (\T, \R) \subset H^{\beta -}_r:= \cap_{\e > 0}H^{\beta - \e}_r$. 
Furthermore, if $\beta >1/2$, then $H^{\beta }_r \subset C^{\beta -1/2}(\T, \R )$ by the Sobolev embedding theorem. 
Now let $u_0\in C^{\alpha-}(\T, \R)$ with $1/2 < \alpha < 1$. Then $u_0\in H^{\alpha -}_r$ 
and hence $\mathcal S(t,u_0)\in H^{\alpha-}_r \subset C^{(\alpha-1/2)-}(\T, \R)$ for any $t \in \R$. \\
(ii)
We argue as in the proof of Proposition \ref{BO on C^{1/2}}.
Given any $u_0\in B^{\alpha + 1/2}_{1,1}(\T, \R)$ with  $\la u_0 \, | \,1\ra = 0$, it follows from Lemma \ref{smoothingBO alpha}
that for any  $t \notin  N'$ (and hence $-t \notin N$), $v:= \mathcal S(-t, u_0) \in C^{\alpha-}(\T, \R)$.
Since $\mathcal S(t, v)=u_0$, the proposition is proved by choosing 
$$ 
u_0\in B^{\alpha+ 1/2}_{1,1}(\T, \R)\setminus \bigcup_{\e >0}C^{ \beta + \e} (\T, \R) \ , \qquad \beta:= \alpha - 1/2 \, .
$$
A possible choice is
$$u_0(x)={\rm Re} \big(\sum_{j=0}^\infty \frac{2^{-j \beta}}{j^2}{\rm e}^{i2^j x}\chi_j(x) \big) \, , \qquad 
\chi_j(x):=\sum_{k\in \Z}\chi (2^j (x-2k\pi)) \, ,
$$
where $\chi: \R \to \R$ is $C^\infty$-smooth with $\chi (0)=1$ and support contained in the open interval  $(-1/4,1/4)$.
\end{proof}


\medskip

 \appendix
 
 \section{Smoothing properties of Hankel operators}\label{Hankel operators}
 
In this appendix, we record results on smoothing properties of Hankel operators, which are used throughout the paper.
First we need to introduce some  more notation.
For any $s \in \R,$ $H^s_-$ denotes the Hardy space with Sobolev exponent $s$, i.e.,
$$
H^s_- := 
\{ f \in H^s_c \ : \ \widehat f(n) = 0 \, \, \, \forall n >0  \} \, 
$$
and $\Pi^-$ the corresponding projection, 
$$
\Pi^- : H^s_c \to H^s_- \, , \ f = \sum_{n \in \Z} \widehat f(n) e^{inx}  \mapsto f= \sum_{n \le 0} \widehat f(n) e^{inx} \, .
$$
For any $u \in H^{1}_c$, denote by $H_u : H^0_- \to H^0_+$ 
and $H^-_u : H^0_+ \to H^0_-$ 
the Hankel operators with symbol $u$, defined as
$$
H_u f := \Pi[u f], \quad \forall \, f \in H^0_- \, , \qquad 
H^-_u f := \Pi^-[u f], \quad \forall \, f \in H^0_+ \, .
$$ 
Actually, $H_u$ and $H_u^-$ extend as bounded linear operators, 
$$
H_u: H^s_- \to H^s_+ \, , \qquad H^-_u: H^s_+ \to H^s_- \, ,
$$ 
for any $-1/2 < s < 0$
(see e.g.  \cite[Lemma 1]{GKT2}).
The following lemma shows that the operators $H_u$ and $H_u^-$  can be defined for symbols $u$ in $H^{1/2}_c$
on appropriate Hardy spaces and that they have smoothing properties, which depend on $s$.
For notational convenience, we itemize them according to the size of the gain of regularity.
For $s < 1/2$ and $\alpha \ge 0$, let
\begin{equation}\label{def beta}
\beta \equiv \beta(s, \alpha) := \alpha + s  -  \frac 12   < \alpha \, .
\end{equation}
\begin{lemma}\label{Hankel} (Smoothing properties of Hankel operators.)
For any $u \in H^{s+\alpha}_c$ and $f \in H^s_-$ with $s \in \R$, 
$\alpha \ge 0$,  the following holds:
\begin{itemize}
\item[(i)]  $\| H_u [f] \|_{s + \alpha} \lesssim_{s, \alpha} \|u\|_{s+\alpha} \, \| f \|_s$, \qquad  \mbox{if} $ s > \frac 12,$ $\alpha \ge 0$.
\item[(ii)] $\| H_u [f] \|_{\frac 12 + \alpha - \varepsilon} \lesssim_{ \alpha, \varepsilon} \|u\|_{\frac 12 +\alpha} \, \| f \|_{\frac 12}$,
\  \ \mbox{if} $ s=\frac 12$, $ \alpha \ge 0$, $ \varepsilon > 0$.
\item[(iii)]  $\| H_u [f] \|_{s + \beta } \lesssim_{s, \alpha} \|u\|_{s+\alpha} \, \| f \|_s$,  \qquad  \mbox{if} $ 0 \le s < \frac 12$, $\alpha \ge \frac 12 - s$.
\item[(iv)] $\| H_u [f] \|_{s + \beta} \lesssim_{s, \alpha} \|u\|_{s+\alpha} \, \| f \|_s$,  \qquad \mbox{if} $ s < 0$, $\alpha \geq \frac 12 - s$, $\alpha > - 2 s ,$ 
\end{itemize}
where $\beta \equiv \beta(s, \alpha)$ is given by \eqref{def beta}.
Corresponding results hold for the operator $H_u^-$.
\end{lemma}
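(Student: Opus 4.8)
The plan is to reduce the whole lemma to a Fourier-side bilinear estimate and then run a Littlewood--Paley (paraproduct) analysis. Since $f\in H^s_-$ has Fourier support in $\{n\le 0\}$ and $\Pi$ keeps Fourier support in $\{n\ge 0\}$, one first records the explicit formula $\widehat{H_uf}(n)=\sum_{j\ge 0}\widehat u(n+j)\widehat f(-j)$ for $n\ge 0$; in particular only the non-negative Fourier modes of $u$ enter. The structural point is that the ``symbol frequency'' $n+j$ always dominates both the output frequency $n$ and the magnitude $j$ of the input frequency, and this is exactly the mechanism producing the smoothing: the three gains $\alpha$, $\alpha-\varepsilon$, $\beta$ merely record how much of the regularity of $u$ can be pushed past the weight $\langle j\rangle^{-2s}$ coming from $f$, which fails to be summable once $s\le 1/2$. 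For $H_u^-$ the argument is identical with $\Pi$ and $\Pi^-$ interchanged, so I would treat only $H_u$.

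Next I would decompose $u=\sum_j P_ju$, $f=\sum_k P_kf$ into Littlewood--Paley blocks and split $H_uf=\Pi(uf)$ into low--high, high--low, high--high and (boundedly many) low--low interactions. The low--high interaction (Fourier frequency of $u$ much below $2^k$) produces a function supported in $\{n<0\}$, hence is annihilated by $\Pi$: this is where the gain comes for free. The low--low interaction involves only boundedly many modes and is controlled by $\|u\|_0\|f\|_0\lesssim\|u\|_{s+\alpha}\|f\|_s$, using $s+\alpha\ge\tfrac12$, i.e.\ $\alpha\ge\tfrac12-s$. For the high--low interaction, Bernstein gives $\|P_m\Pi(P_mu\cdot P_kf)\|_0\lesssim 2^{k/2}\|P_mu\|_0\|P_kf\|_0$, and Cauchy--Schwarz in $k<m$ turns the gain into the factor $\big(\sum_{k<m}2^{k(1-2s)}\big)^{1/2}$, which is bounded for $s>1/2$ (gain $\alpha$), of size $m^{1/2}$ for $s=1/2$ (gain $\alpha-\varepsilon$ after absorbing $m^{1/2}$ into $2^{-m\varepsilon}$), and of size $2^{m(1/2-s)}$ for $s<1/2$ (gain $\beta$, sharp). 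For the high--high interaction one uses $|\widehat{P_ju\cdot P_kf}(n)|\le\|P_ju\|_0\|P_kf\|_0$ together with the fact that $P_m(P_ju\cdot P_kf)$ has only $\sim 2^m$ nonzero modes, so $\|P_m(P_ju\cdot P_kf)\|_0\lesssim 2^{m/2}\|P_ju\|_0\|P_kf\|_0$ with $j\sim k\gtrsim m$; summing the geometric series first in $m\le j$ and then in $j$ reaches the endpoint exponent $s+\beta$ precisely when $2(2s+\alpha)>0$, i.e.\ when $\alpha>-2s$ --- which is exactly the extra hypothesis appearing in case (iv).

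Assembling the four pieces yields (i)--(iv) with constants manifestly of the stated product form, and the $H_u^-$ statements follow by symmetry; the case $\alpha=\tfrac12-s$ (so $\beta=0$) is covered without change. I expect the only genuinely delicate point to be the endpoint bookkeeping in (ii)--(iv): to avoid an $\varepsilon$-loss in (iii)--(iv) one must resist a second Cauchy--Schwarz at the final dyadic summation and evaluate the geometric series exactly, and in (iv) the high--high sum is only barely convergent, so the condition $\alpha>-2s$ (equivalently $s+\beta>-\tfrac12$) must be invoked at precisely that step. Everything else is routine Littlewood--Paley bookkeeping.
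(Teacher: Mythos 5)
Your argument is correct, but it follows a genuinely different route from the paper. Both start from the same Fourier formula $\widehat{H_uf}(n)=\sum_{j\ge 0}\widehat u(n+j)\widehat f(-j)$, $n\ge 0$ (which, as you note, shows only the non-negative modes of $u$ enter and that the symbol frequency dominates). The paper then finishes in one stroke: a single Cauchy--Schwarz in $j$ gives $|\widehat{H_uf}(n)|^2\le \|f\|_s^2\sum_{p\ge0}\langle p\rangle^{-2s}|\widehat u(n+p)|^2$, and after summing in $n$ and setting $\ell=n+p$ everything reduces to the elementary weighted convolution sums $\sum_{n+p=\ell}\langle n\rangle^{2(s+\gamma)}\langle p\rangle^{-2s}$, evaluated case by case; the log loss in (ii) and the condition $\alpha>-2s$ in (iv) appear there as the divergence of $\sum_{p\le \ell}\langle p\rangle^{-1}$ and the requirement $2(s+\beta)>-1$ for $\sum_{n\le\ell}\langle n\rangle^{2(s+\beta)}$, respectively --- no Littlewood--Paley decomposition, Bernstein inequality, or paraproduct trichotomy is used. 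Your route instead localizes dyadically and treats the four interactions separately; it buys a structural picture (the low--high piece killed by $\Pi$, the high--low piece as the source of the gains $\alpha$, $\alpha-\varepsilon$, $\beta$, and the high--high piece as the exact location where $\alpha>-2s$, equivalently $s+\beta>-\tfrac12$, is needed), and it is more flexible if one wanted $L^p$-based or Besov variants, at the cost of the endpoint bookkeeping you correctly flag: in (iii)--(iv) you must sum the full $H^{s+\beta}$-norm of each fixed high--high block over $j$ by the triangle inequality (evaluating the inner geometric series in $m\le j$ exactly) rather than performing a second Cauchy--Schwarz over output blocks, which would cost a logarithm at the endpoint. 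With that step done as you describe, all four cases and the analogous statements for $H_u^-$ follow, with constants of the stated form.
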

\begin{proof}
Let $u = \sum_{k \in \Z} \widehat u(k) e^{ikx} \in H^{s+\alpha}_c$ and $f = \sum_{p \ge 0} \widehat f(-p) e^{-ipx} \in H^s_-$. 
Then with $n := - (k+p)$,
$$
g(x) := \Pi \big( \sum_{k \in \Z, p \ge 0}  \widehat u(-k) \widehat f(-p) e^{-i(k+p)x}  \big) = \sum_{n \ge 0} \widehat g(n) e^{inx} 
$$
where
$$
 \widehat g(n) := \sum_{p \ge 0} \widehat u(n+p) \widehat f(-p) \, , \qquad \forall \, n \ge 0 \, .
$$
By the Cauchy Schwarz inequality, one obtains
$$
 | \widehat g(n) |^2 \le \|f \|_s^2 \sum_{p \ge 0} \frac{1}{\langle p \rangle^{2s} }|\widehat u(n+p) |^2 \, , \qquad \forall \, n \ge 0\, ,
$$
and thus for any $\gamma \in \R$,
\begin{equation}\label{estimate for s+gamma norm}
\| g \|^2_{s+\gamma}  
 \le  \|f \|_s^2 \sum_{\ell \ge 0} |\widehat u(\ell) |^2   \sum_{p,n \ge 0,p + n = \ell} \frac{\langle n \rangle^{2(s+\gamma)}}{\langle p \rangle^{2s} } \, .
\end{equation}
(i) In the case $s > 1/2$, $\alpha \ge 0$ one has $2s > 1$ and hence 
$$
\sum_{p,n \ge 0,p + n = \ell} \frac{\langle n \rangle^{2(s+\alpha)}}{\langle p \rangle^{2s} } 
\le \langle \ell \rangle^{2(s+\alpha)} \sum_{0 \le p \le \ell} \frac{1}{\langle p \rangle^{2s}}
\lesssim_s \langle \ell \rangle^{2(s+\alpha)}  ,
$$
so that by \eqref{estimate for s+gamma norm},
$$
\| g \|_{s+\alpha} \lesssim_s \| u \|_{s+\alpha} \|f \|_s  .
$$
(ii) Recall that $s = 1/2$, $\alpha \ge 0$ and note that without loss of generality, we can assume that $0 < \varepsilon < 1/2$. Then
$$
\sum_{p,n \ge 0, p + n = \ell} \frac{\langle n \rangle^{2(s+ \alpha - \varepsilon)}}{\langle p \rangle^{2s} } 
\le \langle \ell \rangle^{2(\frac 12+\alpha - \varepsilon)} \sum_{0 \le p \le \ell} \frac{1}{\langle p \rangle}
\lesssim  \langle \ell \rangle^{2(\frac 12+\alpha - \varepsilon)} \log \langle \ell \rangle .
$$
Hence \eqref{estimate for s+gamma norm} implies that
$$
\| g \|_{\frac 12+\alpha - \e} \lesssim_{s, \varepsilon}  \| u \|_{\frac 12+\alpha}  \|f \|_\frac 12  \, .
$$
(iii) In the case $0 \le  s < 1/2$, $\alpha \ge 1/2 - s$,  one has $1-2s > 0$ and 
$s + \beta = s + \alpha -(1/2 - s) \ge 0$, implying that
$$
\sum_{p,n \ge 0,p + n = \ell} \frac{\langle n \rangle^{2(s+\beta)}}{\langle p \rangle^{2s} } 
\le \langle \ell \rangle^{2(s+\beta)} \sum_{0 \le p \le \ell} \frac{1}{\langle p \rangle^{2s}}
\lesssim_s  \langle \ell \rangle^{2(s+\beta)} \langle \ell \rangle^{1 - 2s} .
$$
Since $s + \beta + 1/2  -s = s+  \alpha$, it then follows by \eqref{estimate for s+gamma norm} that
$$
\| g \|_{s+ \beta} \lesssim_s \| u \|_{s+\alpha} \|f \|_s  \, .
$$
(iv) In the case $ s < 0$ and $\alpha \geq \frac 12+|s|$, $\alpha >2|s|$,  one has 
$$
s+ \beta =  - \frac 12 + \alpha -2|s| > - 1/2
$$ 
and hence
$$
\sum_{p,n \ge 0,p + n = \ell} \frac{\langle n \rangle^{2(s+\beta)}}{\langle p \rangle^{2s} } 
\le  \langle \ell \rangle^{- 2s}  \sum_{0 \le n \le \ell}  \langle n  \rangle^{2(s+\beta)}
\lesssim_s  \langle \ell \rangle^{-2s} \langle \ell \rangle^{1 + 2s+2 \beta} \,  .
$$
Since $1/2 + \beta  =  s + \alpha $, it then follows by \eqref{estimate for s+gamma norm} that
$$
\| g \|_{s+ \beta} \lesssim_s \|f \|_s  \, \| u \|_{s+\alpha} \, .
$$
\end{proof}


 \section{On diffeomorphism properties of Tao's gauge transform}\label{Appendix C}
 
 The aim of this appendix is to prove diffeomorphism properties of Tao's gauge transform. 
 Without further reference, we will use the notation introduced in the main body of the paper.
 \begin{theorem}\label{diffeogauge} For any $s \ge 0$, Tao's gauge transform
$$
\mathcal G : H^s_{r,0} \to H^s_{+,0} ,  \, u\mapsto \partial_x\Pi (e^{-i\partial_x^{-1}u}) \, ,
$$
is a real analytic diffeomorphism onto an open subset of $H^s_{+,0}$, which is proper.
In particular, the functions $in e^{inx}$, $n \ge 1$, do not belong to the range of $\mathcal G$
and the differential of $\mathcal G$ at $u=0$ reads $d_0 \mathcal G   = - i \Pi$.
\end{theorem}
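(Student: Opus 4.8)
The plan is to establish four things in turn: real analyticity of $\mathcal G$ (together with the formula $d_0\mathcal G=-i\Pi$); that $d_u\mathcal G$ is a real-linear isomorphism for every $u$, so that $\mathcal G$ is everywhere a local diffeomorphism and its image is open; that $\mathcal G$ is injective; and that its image is a proper subset. Real analyticity is routine: $u\mapsto\partial_x^{-1}u$ is bounded linear $H^s_{r,0}\to H^{s+1}_c$, the exponential is real analytic on the Banach algebra $H^{s+1}_c$ (as $s+1>1/2$), and $\Pi,\partial_x$ are bounded linear; $\mathcal G$ visibly takes values in $H^s_{+,0}$, and differentiating at $u=0$ gives $d_0\mathcal G(\dot u)=\partial_x\Pi(-i\partial_x^{-1}\dot u)=-i\Pi\dot u$, with $-i\Pi$ a real-linear isomorphism $H^s_{r,0}\to H^s_{+,0}$. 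The tool for everything else is the pointwise factorization of $h:=e^{-i\partial_x^{-1}u}$: since $\partial_x^{-1}u$ is real valued with zero mean, $|h|\equiv1$ and $h=\Theta\,\overline{\Theta}^{-1}$, where $\Theta:=e^{-i(\partial_x^{-1}u)_+}$ and $(\partial_x^{-1}u)_+:=\Pi_{\ge1}\partial_x^{-1}u$ denotes the part of $\partial_x^{-1}u$ supported on the modes $e^{inx}$, $n\ge1$ (I write $\Pi_{\ge1}$ for that projection); both $\Theta$ and $\Theta^{-1}=e^{i(\partial_x^{-1}u)_+}$ lie in $H^{s+1}_+$ with constant Fourier coefficient equal to $1$.

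The heart of the proof is that $d_u\mathcal G(\dot u)=-i\partial_x\Pi(h\,\partial_x^{-1}\dot u)$ is an isomorphism for every $u$. Writing $\partial_x^{-1}\dot u=\eta_++\overline{\eta_+}$ with $\eta_+:=(\partial_x^{-1}\dot u)_+\in H^{s+1}_{+,0}$ and inserting $h=\Theta\,\overline{\Theta}^{-1}$, I would peel off the main term, $\Pi_{\ge1}(h\,\partial_x^{-1}\dot u)=\Theta\eta_++K\eta_+$, where $K$ collects Hankel-type operators whose symbols ($h$, and the conjugate of $\Theta^{-1}-1$) lie in $H^{s+1}_c$; such operators are compact, being operator-norm limits of the finite rank Hankel operators obtained by approximating their symbols by trigonometric polynomials (cf.\ also the boundedness in Lemma \ref{Hankel}). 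Composing with the isomorphisms $\dot u\mapsto\eta_+$ ($H^s_{r,0}\cong H^{s+1}_{+,0}$), multiplication by $\Theta$ ($H^{s+1}_{+,0}\cong H^{s+1}_{+,0}$) and $-i\partial_x$ ($H^{s+1}_{+,0}\cong H^s_{+,0}$) shows $d_u\mathcal G$ is Fredholm of index $0$. It remains to show $\ker d_u\mathcal G=0$: if $d_u\mathcal G(\dot u)=0$, then $\phi:=h\,\partial_x^{-1}\dot u$ has only nonpositive frequencies, and since $\partial_x^{-1}\dot u$ is real and $\overline h=1/h$ one gets $\phi=h^2\overline\phi$, i.e.\ $\overline{\Theta}^2\phi=\Theta^2\overline\phi$; the left side is supported on frequencies $\le0$, the right on frequencies $\ge0$, so both equal a constant $c_0$, whence $\partial_x^{-1}\dot u=c_0/|\Theta|^2$; realness forces $c_0\in\R$, and the zero-mean condition on $\partial_x^{-1}\dot u$ then forces $c_0=0$, so $\dot u=0$. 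Hence $d_u\mathcal G$ is invertible everywhere, $\mathcal G$ is a local diffeomorphism, and $\mathcal O:=\mathcal G(H^s_{r,0})$ is open.

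For injectivity, suppose $\mathcal G(u_1)=\mathcal G(u_2)$. Since $\partial_x^{-1}\mathcal G(u_j)$ equals the frequency-$\ge1$ part $\Pi_{\ge1}h_j$ of $h_j$, the difference $h_1-h_2$ has only frequencies $\le0$. Multiplying by $\overline{\Theta_1\Theta_2}$ and using $h_j=\Theta_j\overline{\Theta_j}^{-1}$ gives $\overline{\Theta_1\Theta_2}(h_1-h_2)=\overline A-A$ with $A:=\overline{\Theta_1}\Theta_2$; the left side has frequencies $\le0$, while $\overline A-A$ is purely imaginary, and a purely imaginary function supported on frequencies $\le0$ must be constant, so $\mathrm{Im}\,A\equiv\beta\in\R$. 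But $A=e^{\,\overline{-i(\partial_x^{-1}u_1)_+}-i(\partial_x^{-1}u_2)_+}$ is nonvanishing and has a continuous logarithm with zero mean, so a continuous determination of $\arg A$ has zero mean; were $\beta\ne0$, $A$ would stay on the line $\{\mathrm{Im}=\beta\}$, which avoids $0$ and lies in one open half-plane, so $\arg A$ would have constant sign, a contradiction. Hence $\beta=0$, $A$ is real, $\overline{\Theta_1}\Theta_2=\Theta_1\overline{\Theta_2}$, so $h_1=h_2$; then $\partial_x^{-1}(u_1-u_2)$ is a continuous $2\pi\Z$-valued function with zero mean, hence $\equiv0$, so $u_1=u_2$.

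Finally, the image is proper: if $\mathcal G(u)=ine^{inx}$ for some $n\ge1$ then $\Pi h=e^{inx}+c$ with $c=\widehat h(0)$, so $1+|c|^2=\|\Pi h\|_0^2\le\|h\|_0^2=1$, forcing $c=0$ and $(\text{Id}-\Pi)h=0$, i.e.\ $h=e^{inx}$; but then $\partial_x^{-1}u=-nx+\text{const}$ is not $2\pi$-periodic, a contradiction. Combining the four steps, $\mathcal G$ is a real analytic diffeomorphism onto the open proper set $\mathcal O$, with $d_0\mathcal G=-i\Pi$, and the functions $ine^{inx}$ lie outside $\mathcal O$. The main obstacle is the invertibility of $d_u\mathcal G$: putting it into isomorphism-plus-compact form cleanly, and then killing the kernel via the frequency-support identity forced by $|h|=1$; the short half-plane/winding argument used for injectivity is the other point requiring care.
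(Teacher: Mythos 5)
Your plan (analyticity; Fredholm index zero plus trivial kernel for $d_u\mathcal G$; global injectivity via the factorization $h:=e^{-i\partial_x^{-1}u}=\Theta\,\overline{\Theta}^{-1}$, $\Theta:=e^{-i\Pi_{\geq 1}\partial_x^{-1}u}$; explicit functions outside the range) is the same overall strategy as the paper's, but the decisive step contains a genuine gap: the claim that after peeling off the main term $\Theta\eta_+$ the remainder $K$ consists of compact Hankel-type operators is false for $u\neq 0$. With $\psi:=\Theta^{-1}-1\in H^{s+1}_{+,0}$ one has exactly
\begin{equation*}
\Pi\big[h\,\eta_+\big]-\Theta\,\eta_+
=\Theta\,\Pi\big[\overline{\psi}\,\eta_+\big]
\;+\;\Pi\big[\Theta\,(\mathrm{Id}-\Pi)\big(\overline{\psi}\,\eta_+\big)\big]\, ,
\end{equation*}
and while the second summand is of Hankel type (compact, as you say), the first is multiplication by $\Theta$ composed with the Toeplitz operator $f\mapsto \Pi[\overline{\psi}f]$ with co-analytic symbol $\overline{\psi}$. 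A Toeplitz operator with a nonzero symbol is never compact: it sends the $H^{s+1}$-normalized, weakly null sequence $e^{inx}/n^{s+1}$ to elements whose norms converge to a positive limit. Concretely, for $\Pi_{\geq 1}\partial_x^{-1}u=\alpha e^{ix}$ with small $\alpha\neq 0$ this piece is, to first order in $\alpha$, a nonzero multiple of the backward shift. Moreover its size is comparable to $\Vert u\Vert$, so it cannot be absorbed as a small perturbation either; hence "$d_u\mathcal G=(-i\partial_x)\circ(\text{mult.\ by }\Theta)\circ(\dot u\mapsto\eta_+)\,+\,\text{compact}$" is simply wrong away from $u=0$, and your route to index zero collapses at this point.

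What is missing is precisely the invertibility of the Toeplitz operator with symbol $h$ (equivalently of $\mathrm{Id}+T_{\overline{\psi}}=T_{\overline{\Theta^{-1}}}$): since $\Theta^{\pm 1}$ are analytic and non-vanishing, $T_{\overline{\Theta}}\,T_{\overline{\Theta^{-1}}}=T_{\overline{\Theta^{-1}}}\,T_{\overline{\Theta}}=\mathrm{Id}$, i.e.\ the invertible "main term" must be $T_h$ (or $\Theta\,T_{\overline{\Theta^{-1}}}[\,\cdot\,]$), not plain multiplication by $\Theta$; this is exactly the ingredient the paper imports as the invertibility of $T_{v/\overline v}$ with explicit inverse $f\mapsto \frac{1}{v}\,\Pi[\frac{1}{\overline v}f]$ (cf.\ \cite[Lemma 6.5]{GKT4}). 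Once you replace your main term accordingly, the remainder really is made of Hankel operators and rank-one mean corrections, and the rest of your argument goes through: your kernel computation ($\overline{\Theta}^{2}\phi=\Theta^{2}\overline{\phi}$ forces $\phi$ constant, then zero mean kills it) is sound, your injectivity proof (constancy of $\mathrm{Im}(\overline{\Theta_1}\Theta_2)$ plus the zero-mean/winding argument) is a correct and attractive alternative to the paper's mean-value manipulations, and your exclusion of \emph{all} $ine^{inx}$, $n\geq 1$, at once via $|h|\equiv 1$ and $\Vert\Pi h\Vert\leq\Vert h\Vert$ is cleaner than the paper's treatment, which needs a separate kernel lemma for $n\geq 2$ and a scaling trick for $n=1$.
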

Before proving  Theorem \ref{diffeogauge}, we make some preliminary considerations.
For any given any  $u\in L^2_{r,0}$,
let $w=\mathcal G(u)$ and introduce
\begin{equation}\label{def v plus}
v:=e^{-i\partial_x^{-1}\Pi u} \in H^1_+ \, .
\end{equation}
Since $u=\Pi u+\overline{\Pi u}$,  one has
\begin{equation}\label{identity for u}
e^{-i\partial_x^{-1} u} = e^{-i\partial_x^{-1}\Pi u} \cdot e^{- i\partial_x^{-1}\overline{\Pi u}} = \frac {v}{\overline{v}}
\end{equation}
and consequently,
\begin{equation}\label{identity w}
\partial_x^{-1}w = \partial_x^{-1} \partial_x \Pi [e^{-i\partial_x^{-1} u} ] = \Pi [\,  \frac {v}{\overline{v}} \, ] + a \,  ,  \quad
a:=\textcolor{red}{-} \la e^{-i\partial_x^{-1} u}  | 1 \ra \, .
\end{equation}
Given $g \in H^1_c$, we denote by $\check H_{ g}$ the anti-linear Hankel operator of symbol $g$, 
$$
\check H_g : H_+ \to H_+ , \,  h \mapsto \Pi [g \overline h] \, .
$$
\begin{lemma}\label{kernel}
For any $w \in \mathcal G(L^2_{r,0})$, 
the nullspace $\ker ({\rm Id}- \check H_{\partial_x^{-1}w})$ of the linear operator ${\rm Id}- \check H_{\partial_x^{-1}w} : H_+ \to H_+$ satisfies
$$
\ker ({\rm Id}- \check H_{\partial_x^{-1}w})\cap H_{+,0} =\{ 0\} \, .
$$
\end{lemma}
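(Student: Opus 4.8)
The plan is to fix $h\in H_{+,0}$ with $\check H_{\partial_x^{-1}w}[h]=h$ and to show $h=0$; the whole argument rests on the fact that, by \eqref{identity for u}, $v/\overline v=e^{-i\partial_x^{-1}u}$ is unimodular. First I would simplify the fixed point equation using \eqref{identity w}. Since $h\in H_{+,0}$, the function $\overline h$ has Fourier support in $\{n\le-1\}$, so $\Pi[a\overline h]=0$; and $(\mathrm{Id}-\Pi)[v/\overline v]$ has Fourier support in $\{n\le-1\}$, so $(\mathrm{Id}-\Pi)[v/\overline v]\cdot\overline h$ has support in $\{n\le-2\}$ and is annihilated by $\Pi$. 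Hence \eqref{identity w} gives $\check H_{\partial_x^{-1}w}[h]=\Pi\big[(v/\overline v)\,\overline h\big]$, so that
$$
h=\Pi\big[(v/\overline v)\,\overline h\big].
$$

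Next I would pass to the new unknown $\phi:=h/v$. Recall that $v=e^{-i\partial_x^{-1}\Pi u}\in H^1_+$ is nonvanishing with $\widehat v(0)=1$, and $1/v=e^{i\partial_x^{-1}\Pi u}\in H^1_+$; since $H^1(\T)\subset C(\T)$, $1/v$ is a bona fide $L^\infty$-function, so $\phi\in L^2$ has Fourier support in $\{n\ge1\}$, i.e. $\phi\in H_{+,0}$, and $h=v\phi$, $\overline h=\overline v\,\overline\phi$. Then $(v/\overline v)\,\overline h=(v/\overline v)\,\overline v\,\overline\phi=v\overline\phi$, so the equation becomes $h=\Pi[v\overline\phi]$.

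Now the point is an energy identity. Pairing with $h$ and using that $\Pi$ is the orthogonal projection onto $H_+$ and $h\in H_+$,
$$
\|h\|^2=\langle \Pi[v\overline\phi] \,|\, h\rangle=\langle v\overline\phi \,|\, h\rangle=\frac1{2\pi}\int_0^{2\pi}|v|^2\,\overline{\phi}^{\,2}\,dx ,
$$
whereas directly $\|h\|^2=\frac1{2\pi}\int_0^{2\pi}|v|^2|\phi|^2\,dx$. Writing $\phi=p+iq$ with $p,q$ real one has $|\phi|^2-\overline{\phi}^{\,2}=2q^2+2ipq$, so subtracting the two formulas for $\|h\|^2$ and taking real parts yields $\int_0^{2\pi}|v|^2q^2\,dx=0$. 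Since $|v|^2=e^{2\,\mathrm{Im}(\partial_x^{-1}\Pi u)}$ and the exponent is continuous on $\T$, $|v|^2$ is bounded below by a positive constant, hence $q\equiv0$; thus $\phi$ is real valued. But $\phi\in H_{+,0}$ has Fourier support in $\{n\ge1\}$, and reality forces $\widehat\phi(n)=\overline{\widehat\phi(-n)}=0$ for every $n\ge1$, while $\widehat\phi(0)=0$; hence $\phi=0$ and $h=v\phi=0$, as desired.

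The only genuinely nontrivial step is spotting the substitution $\phi=h/v$, which converts the anti-linear fixed point equation into $h=\Pi[v\overline\phi]$ and makes the positivity argument go through; everything else is bookkeeping with Fourier supports. The one technical care point is to justify that $1/v$ is really a function in $H^1_+$ (so that $\phi$ is a well-defined $L^2$ function with Fourier support in $\{n\ge1\}$) and that $|v|$ stays bounded away from zero — both immediate from $v$ being the exponential of a function in $H^1_+\subset C(\T)$.
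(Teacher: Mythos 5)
Your proof is correct, and after the common first step it takes a genuinely different route from the paper's. The initial reduction is the same: using \eqref{identity w} and the Fourier support of $\overline h$ you discard the constant $a$ and the anti-holomorphic part of $v/\overline v$ to get $h=\Pi[(v/\overline v)\,\overline h]$, exactly as in the paper. From there the paper stays algebraic: it writes $(v/\overline v)\,\overline h=h+\overline f$ with $f\in H_{+,0}$, multiplies by $\overline v$ to obtain $\overline v\,\overline f=-vf$, so that $vf$ lies in $H_+$ together with its conjugate and is therefore constant, and the mean-zero conditions force first $f=0$ and then, by the same principle applied to $h/v$, $h=0$. You instead substitute $\phi=h/v$ at the outset (legitimate because $1/v=e^{i\partial_x^{-1}\Pi u}\in H^1_+\subset L^\infty$, so $\phi\in H_{+,0}$), rewrite the fixed-point equation as $h=\Pi[v\overline\phi]$, and run an $L^2$ energy identity: computing $\|h\|^2$ two ways gives $\int_0^{2\pi}|v|^2\big(|\phi|^2-\overline\phi^{\,2}\big)\,dx=0$, whose real part yields $\operatorname{Im}\phi=0$ because $|v|$ is bounded away from zero, and a real-valued $L^2$ function with Fourier support in $\{n\ge 1\}$ must vanish. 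Both arguments rest on the same structural facts — $v$ and $1/v$ are non-vanishing elements of $H^1_+$ with mean $1$, and the reality of $u$ makes $v/\overline v$ unimodular — but the paper exploits them via the ``holomorphic with anti-holomorphic conjugate implies constant'' principle (used twice, with mean-zero conditions killing the constants), whereas your positivity argument avoids the auxiliary function $f$ and concludes in one stroke, at the mild price of invoking the pointwise lower bound on $|v|$, which the paper only needs qualitatively (non-vanishing).
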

\begin{proof}[Proof of Lemma \ref{kernel}]
Let $h$ be an element in $\ker ({\rm Id}- \check H_{\partial_x^{-1}w})\cap H_{+,0}$.
Then $h=\Pi [(\partial_x^{-1}w) \overline h ]$ and by \eqref{identity w}
$$
\Pi [ \, \frac{v}{\overline{ v}} \,  \overline h \, ] = \Pi [ (\Pi [ \frac{v}{\overline{v}}] ) \, \overline h  ]
=\Pi [(\partial_x^{-1} \, w) \,  \overline h ] - a \, \Pi [ \, \overline h \,]  \, .
$$
Since $h \in H_{+,0}$, one has $\Pi [ \, \overline h \,] = \la 1 |  h \ra = 0$.
Using that  $h=\Pi [(\partial_x^{-1}w) \overline h ]$ it then follows that
$$
\Pi [ \, \frac{v}{\overline{ v}} \,  \overline h \, ] = h \, ( = \Pi h) \,  .
$$
Hence there exists $f\in H_{+,0}$ so that
\begin{equation}\label{eq1:h}
 \frac{v}{\overline{ v}} \, \overline h = h +  \overline f \, .
\end{equation}
This implies that
\begin{equation}\label{identity for v_+ f}
\overline{v} \, \overline f = v \overline h -  \overline{v} h 
= \overline {(\overline{v} h -  v \, \overline h  )   } =  - v f  \in H_+ \, ,
\end{equation}
where we used that by \eqref{def v plus}, $v$ is in $H^1_+$.
Consequently\footnote{Throughout this appendix, we make frequent use of the elementary observation that any function $f\in H_+$ with $\overline f \in H_+$, is constant.}, 
$v f$ is a constant function. 
Furthermore, since $v$ and $f$ both belong to $H_+$ and  $\la f |  1 \ra =0$,
$$
\la v f |  1\ra =\la v |  1\ra \cdot  \la f \, | \, 1\ra = 0 \, .
$$
Therefore  $v f=0$ and  in turn, since $v$ does never vanish,   $f=0$. Coming back to \eqref{eq1:h}, we conclude that
$$
 \frac{\overline h}{\overline{ v}} = \frac{h}{v}  \in H_+ \,  .
$$
We thus again conclude that the function $h/v$ is constant. Since 
$$
\la \frac{h}{v} | \, 1\ra = \la h | \, 1\ra \cdot \la \frac{ 1}{v} | \, 1\ra = 0
$$
we infer that $h=0$.
\end{proof}
\begin{proof}[Proof of Theorem \ref{diffeogauge}]
In a first step we consider the case where $s=0$. It is straightforward to verify that
$\mathcal G : L^2_{r,0} \to H_{+,0}$ is real analytic and  $d_0 \mathcal G   = - i \Pi$.
Next we prove that for any integer $n \ge 1$, the function
$f_n(x) := ine^{inx}$
is not an element in the image $\mathcal G(L^2_{r,0})$ of $\mathcal G$.
In the case where $n \ge 2$ we argue as follows. Note that for $n \ge 2$,
$$ 
h_n (x) :=e^{ix}+e^{i(n-1)x} \in H_{+, 0}  
$$
and since $\partial_x^{-1}f_n =e^{inx}$, one has
$h_n - \check H_{\partial_x^{-1}f_n}[h_n] = 0 $
and hence
$$ 
h_n \in \ker ({\rm Id}- \check H_{\partial_x^{-1}f_n}) \cap  H_{+, 0} \, .
$$
By Lemma \ref{kernel} one then concludes that  $f_n\notin \mathcal G(L^2_{r,0})\ .$ 
In the case $n =1$ we have to argue differently since $h_1 := e^{ix}+ 1$ is not an element in $H_{+,0}$.
We note that $\mathcal G$ possesses the following scaling invariance : for any $u \in L^2_{r,0}$ and any integer $n\geq 1$,
$$
\mathcal G(u_n) (x) =n\mathcal G(u) (nx)\, , \qquad u_n(x) := n u(nx) \, .
$$
Consequently, if $f_1(x)= e^{ix}$ were to belong to the range of $\mathcal G$, then so would $f_n(x) = ne^{inx} = n f_1(nx)$ for any $n \ge 2$,
in contradiction to what we just have proved.

Next we establish that $\mathcal G: L^2_{r, 0} \to H_{+,0}$ is injective. Assume that $u_1$, $u_2 \in L^2_{r,0}$ satisfy
$\mathcal G(u_1)=\mathcal G(u_2)$.
Set
$$
v_1:=e^{-i\partial_x^{-1}\Pi u_1}\in H^1_+ \,  , \qquad  v_2:=e^{-i\partial_x^{-1}\Pi u_2}\in H^1_+ \,  .
$$
By \eqref{identity for u}, the assumption $\mathcal G(u_1)=\mathcal G(u_2)$ can then be written as
\begin{equation}\label{identity for u_1,u_2}
\partial_x\Pi \big[ \, \frac{v_1}{\overline{v_1}}  -\frac{v_2}{\overline{v_2}}  \, \big]=0 \, .
\end{equation}
It means that there exists $f \in H_+$ so that
\begin{equation}\label{eq2:ab}
\frac{v_1}{\overline{v_1}}  -\frac{v_2}{\overline{v_2}} \,  = \, \overline f \, .
\end{equation}
Arguing as in \eqref{identity for v_+ f} it then follows that
$$
\overline{v_1} \,  \overline{v_2} \, \overline f
=v_1 \, \overline{v_2} - v_2 \, \overline{v_1} = - \overline {( v_1 \, \overline{v_2} - v_2 \, \overline {v_1} )}= - v_1 v_2 \, f \, ,
$$
implying that $v_1 v_2 \, f $ is a constant function, $v_1 v_2 \, f  = a \in \C$. Coming back to \eqref{eq2:ab}, we obtain
\begin{equation}\label{eq3:ab}
\frac{v_1}{v_2}-\frac{\overline{v_1}}{\overline{v_2}} = \frac{\overline a}{v_2 \, \overline{v_2}} \,  .
\end{equation}
Note that 
\begin{equation}\label{mean of v_1}
v_1 =1+ r\, , \qquad r:=  \sum_{k=1}^\infty \frac{(-i \partial_x^{-1}\Pi u_1)^k}{k!}  \in H_{+,0} \, .
\end{equation}
Hence $\la v_1 \, |  \, 1 \ra  =1$ and in turn $\la \overline{v_1} \, | \, 1 \ra =1$. 
Substituting $ -u_2$ for $u_1$, \eqref{mean of v_1} yields $\la \frac{1}{v_2} \, | \,  1\ra =1$ and $\la \frac{1}{\overline{v_2}} \, | \,  1 \ra =1$. 
Finally, since $v_1$ and $1/v_2$ both belong to $H_+$, one has
\begin{equation}\label{identity for quotient}
\la \frac{v_1}{v_2} \, | \,  1 \ra  =\la v_1 \, | \,  1\ra \cdot  \la \frac{1}{v_2} \, | \,  1 \ra =1 \,  ,
\end{equation}
and in turn $\la \frac{\overline{v_1}}{\overline{v_2}} \, | \, 1\ra =1$. We then conclude from \eqref{eq3:ab}  that
$$
0 = \la  \frac{\overline a}{v_2 \overline{v_2}}  \, | \, 1\ra = \overline a \, \| \frac{1}{v_2} \| \textcolor{red}{^2} \, ,
$$
implying that $a=0$. By \eqref{eq3:ab} it then follows that
$\frac{v_1}{v_2} = \frac{\overline{v_1}}{\overline{v_2}}$ is a constant, which by \eqref{identity for quotient} equals $1$. 
We thus have proved that $v_1= v_2$ and therefore
$$
 \Pi [u_1] = \frac{1}{v_1} i  \partial_x v_1 = \frac{1}{v_2} i \partial_x v_2 = \Pi[u_2] \,  ,
$$
yielding $u_1= u_2$. This proves the injectivity of $\mathcal G$.

It remains to show that $\mathcal G$ is a local diffemorphism. 
As already pointed out,  $\mathcal G :  L^2_{r,0} \to H_{+,0}$ is a real analytic map. Hence by the inverse function theorem, 
we just need to prove that for any $u\in L^2_{r,0}$, $d_u \mathcal G : L^2_{r,0} \to H_{+,0}$ is a linear isomorphism.

 An easy computation yields that for any $h\in L^2_{r,0}$,
\begin{equation}\label{dG}
d_u\mathcal G[h]= - i\partial_x\Pi [ (\partial_x^{-1}h) e^{-i\partial_x^{-1}u}  \, ] 
= - i\partial_x\Pi [(\partial_x^{-1}h )\frac{v}{\overline v} \, ] \,  ,  
\end{equation}
where $ v:=e^{-i\partial_x^{-1}\Pi u} \in H^1_+$ (cf. \eqref{identity for u}).
First we prove that $d_u\mathcal G$ is one-to-one. Assume that $h$ belongs to the kernel of $d_u \mathcal G$. 
Then $h \in H_{+,0}$ and  there exists $f\in H_+$ so that
$$
(\partial_x^{-1}h) \frac{v}{\overline v} =\overline f \, .
$$
It follows that
$$
\overline v^2 \, \overline f = \partial_x^{-1}h \, v \, \overline v
$$
is real valued and belongs to $H_+$. Hence $\overline v^2 \, \overline f$ is a constant function, $\overline v^2 \, \overline f = a \in \C$, 
implying that
$$
\partial_x^{-1}h = \frac{a}{v \, \overline v} \, .
$$
Taking the inner products of both sides of the latter identity with $1$, we get $a=0$. Since $h \in H_{+,0}$, we conclude that $h=0$,
proving that $d_u \mathcal G$ is one-to-one.

It remains to show that $d_u \mathcal G : L^2_{r,0} \to H_{+,0}$ is onto.
Since $d_u \mathcal G$ is one-to-one, it suffices to prove that for any $u \in L^2_{r,0}$, $d_u\mathcal G$ is a compact perturbation of a linear isomorphism. 
For any $h\in L^2_{r,0}$, one has
$$
h= h_1 + h_2 \, , \qquad  h_1 =\Pi [h] \in H_{+,0}  \, , \quad h_2 = \overline{h_1} = (\mbox{Id} - \Pi ) [h]  \, ,
$$
and hence by \eqref{dG}, 
$$
d_u \mathcal G[h] = L_1[h_1]+L_2[h_2]+L_3[h_1]+L_4[h_2] \, , \qquad \forall h \in L^2_{r,0} \, ,
$$
where $L_1, L_3 : H_+ \to H_+$ are the bounded linear operators, 
$$
L_1[g] := -i \Pi [ \frac{v}{\overline v} \, g ]  \, , \qquad \qquad 
L_3[g] :=-i \Pi [\partial_x \big( \frac{v}{\overline v} \big) \cdot  \partial_x^{-1}g ] \, , \qquad  \, 
$$
and $L_2, L_4 : H_- \to H_+$ the bounded linear operators, 
$$
 L_2[g] := -i \Pi [\frac{v}{\overline v} \, g ] \, , \qquad \qquad 
L_4[g] := -i \Pi [ \partial_x \big (\frac{v}{\overline v} \big) \cdot \partial_x^{-1}  g ] \, . \qquad \ 
$$
By the Sobolev embedding theorem and Rellich's theorem, the bounded linear operator $\partial_x^{-1} : L^2_{r,0} \to H^1_{r,0}$ 
gives rise to a compact linear operator $L^2_{+,0} \to L^\infty_{r,0}$, which we again denote by $\partial_x^{-1}$. 
Hence $L_3$ and $L_4$ are compact operators.
Furthermore, $ \Pi [\frac{v}{\overline v}] \, \in H^1_+$ and $L_2$ is the Hankel operator $H_{-i \Pi [\frac{v}{\overline v}]}$
with symbol $-i \Pi [\frac{v}{\overline v}]$,
$$
L_2[g]=-i  H_{\Pi [\frac{v}{\overline v}]} \, [g] \, , \qquad \forall \, g \in  H_-  \, .
$$
By the smoothing properties of Hankel operators (cf. Lemma \ref{Hankel}(iii) in Appendix \ref{Hankel operators}  with $\alpha = 1, s= 0, \beta= 1/2$) 
it then follows that $L_2: H_- \to H_+$ is compact.
Finally, $L_1: H_+ \to H_+$ is a  Toeplitz operator with symbol $-i \frac{v}{\overline v}$, 
$$
L_1 [g] = -i T_{\frac{v}{\overline v}} [g] \, , \qquad \forall \, g \in H_+ \, ,
$$
which is invertible with inverse given by (cf. e.g. \cite[Lemma 6.5]{GKT4})
$$
L_1^{-1} [f] = i \frac 1 v \, \Pi [\,  \frac{1}{\overline v} \, f ] \,  ,  \qquad  \forall \,  f \in H_+ \,  .
$$
Denote by $\Pi_1$ the projection 
$$
\Pi_1: H_+ \to H_{+, 0} \, ,  \, g \mapsto g - \la g | 1 \ra \, .
$$ 
Since $d_u \mathcal G : L^2_{r,0} \to H_{+,0}$ it follows that for any $h \in L^2_{+,0}$,  $d_u \mathcal G [h]$ equals
$$
  \Pi_1 \circ L_1  [ \Pi h]  +  \Pi_1 \circ L_3  [ \Pi h]  +  
 \Pi_1 \circ L_2 [ (\mbox{Id} - \Pi) h ]  +  \Pi_1 \circ L_4  [(\mbox{Id} - \Pi) h]  
$$
Clearly, the linear operators $\Pi_1 \circ L_3  \circ \Pi : L^2_{r,0} \to H_{+,0}$ and  
$$
\Pi_1 \circ L_2  \circ (\mbox{Id} - \Pi) :  L^2_{r,0} \to H_{+,0} \, , \quad  
\Pi_1 \circ L_4  \circ (\mbox{Id} - \Pi) : L^2_{r,0} \to H_{+,0}
$$  
are compact. Furthermore, one verifies in a straightforward way that
$\Pi_1 \circ L_1  \circ \Pi  : L^2_{+,0} \to H_{+,0}$ is a linear isomorphism (cf. \cite[Lemma 6.5]{GKT4}).
Altogether, we thus have proved that 
$d\mathcal G(u) : L^2_{r,0} \to H_{r,0}$ is a compact perturbation of a linear isomorphism and hence 
a Fredholm operator of index zero. This completes the proof of Theorem \ref{diffeogauge} in the case $s=0$.

By the same arguments as in the above proof one verifies that for any $s > 0$, $\mathcal G : H^s_{r,0}(\T ) \to H^s_{+,0}(\T )$ is a diffeomorphism
onto an open proper subset of  $H^s_{+,0}(\T )$.
\end{proof}
\begin{remark}
The following considerations add to the results on the image of Tao's gauge transform of Theorem \ref{diffeogauge}. 
Consider the family of one gap potentials $u_\alpha \in \cap_{s\ge 0} H^s_{r,0}$, given by 
\begin{equation}\label{one gap}
(\Pi u_\alpha)(x) = \frac{\alpha e^{ix}}{1 - \alpha e^{ix}} =  i \partial_x \log(1 - \alpha e^{ix})
\end{equation}
where $\alpha \in \C$ satisfies $0 < |\alpha | < 1$. Such potentials, studied in \cite[Appendix B]{GK},
give rise to traveling wave solutions of the BO equation. They are one gap potentials in the sense that
$$
\gamma_1 (u_\alpha) = \frac{|\alpha|^2}{1 - |\alpha|^2} \, , \qquad  \gamma_n (u_\alpha) = 0 \, , \quad \forall \, n \ge 2 \, .
$$
Using the identity \eqref{identity for u}, the definition of $\mathcal G(u)$, and the second identity in \eqref{one gap} one sees that 
$$
\begin{aligned}
\mathcal G(u_\alpha) & = \partial_x \Pi [\frac{1 - \alpha e^{ix}} {1 - \overline \alpha e^{-ix}} ]
= \partial_x \Pi[(1 - \alpha e^{ix}) \sum_{k \ge 0} (\overline \alpha e^{-ix})^k] \\
& = \partial_x \big((1 - \alpha e^{ix}) - \alpha e^{ix}\overline \alpha e^{-ix} \big) =  - i \alpha e^{ix} \, .
\end{aligned}
$$
In particular, it follows that any $\beta \in \C$ with $0 \le |\beta| < 1$, $\beta e^{ix}$ is in the image $\mathcal G(L^2_{r,0})$ of $\mathcal G$.

It is then natural to ask whether $\beta e^{ix}$ is in $\mathcal G(L^2_{r,0})$ for some $\beta \in \C$ with $|\beta| \ge 1$.
Using arguments of the proof of Theorem  \ref{diffeogauge}, we now show that 
$\beta e^{ix}$ is {\em not} in $\mathcal G(L^2_{r,0})$ for any $\beta \in \C$ with $|\beta | \ge 1$.
We argue by contradiction and assume that
there exists $u \in L^2_{r,0}$ so that $\mathcal G(u) = \beta e^{ix}$ with $\beta \in \C$ satisfying $|\beta| \ge 1$. 
Following \eqref{def v plus}, define
$v:=e^{-i\partial_x^{-1}\Pi u} \in H^1_+$.
By \eqref{identity for u} one has $e^{-i\partial_x^{-1} u} = \frac {v}{\overline{v}}$, implying that (cf. \eqref{identity w})
$$
\mathcal G(u) = \partial_x \Pi[ \,\frac {v}{\overline{v}} \,] \, .
$$
Applying $\partial_x^{-1}$ to both sides of the latter identity, one concludes that there exists a constant $a \in \C$
so that $- i \beta e^{ix} = \Pi[ \,\frac {v}{\overline{v}}] + a$. It means that there exists $f \in H_+$ so that
$- i \beta e^{ix} - \frac {v}{\overline{v}} = \overline f$ or, multiplying both sides of the latter equation by $\overline v$
\begin{equation}\label{identity beta exp(ix)}
- i \beta e^{ix} \, \overline{v}  -  v = \overline f \, \overline{v} \, .
\end{equation}
Since by \eqref{mean of v_1}, $\la v \, | \, 1 \ra = 1 $  and $\beta e^{ix} ( \overline{v} - 1 ) \in \overline{H_+}$
and hence
$$- i \beta e^{ix} \overline{v} \in  -i \beta e^{ix} +  \overline{H_+} ,
$$
 it then follows from \eqref{identity beta exp(ix)} that
 $ -i \beta e^{ix} - v \in  \overline{H_+}$ and hence is a constant. Using that $\la v \, | \, 1 \ra = 1 $
 we then conclude that
 \begin{equation}\label{identity 0 for v}
  v = 1 - i \beta e^{ix}
  \end{equation} 
  and hence
 \begin{equation}\label{identity 1 for v}
 \partial_x v = \beta e^{ix}  .
 \end{equation}
 On the other hand, by the definition of $v$, one has $v = e^{-i\partial_x^{-1}\Pi u}$ and hence by \eqref{identity 0 for v}
 \begin{equation}\label{identity 2 for v}
   \partial_x v = - i v \Pi u = -i (1 - i \beta e^{ix})  \Pi u.
 \end{equation}
 Combining \eqref{identity 1 for v} and \eqref{identity 2 for v} it then follows that
 $$
 \Pi u =  \frac{ i \beta e^{ix}}{1 - i \beta e^{ix}} .
 $$
 Since by assumption $u \in L^2_{r,0}$, it follows that $|\beta| > 1$. 
 In this case, $\Pi u$ can be written as
 $$
  \Pi u = - \frac{ 1}{1 - \frac{1}{i \beta} e^{-ix}} 
  = - 1 - \sum_{n \ge 1}( \frac{1}{i \beta} e^{ix})^n =  -1 -  \sum_{n \ge 1}\frac{1}{( i \beta)^n} e^{-inx} ,
 $$
 which contradicts that $\Pi u \in  H_{+,0}$.
\end{remark}

 
 \section{High frequency approximations of the differentials $d_u\Phi$ and $d_z \Phi^{-1}$}\label{approximation dPhi}
 
 So far, no high frequency approximation has been found for $\Phi^{-1}$. 
Our goal is to derive such an approximation at least for the differential of $\Phi^{-1}$. 
At the same time we derive a high frequency approximation for the differential of $\Phi$.
  Such approximations are useful for analyzing the pullback of
 vector fields by the maps $\Phi$ and $\Phi^{-1}$.

 Denote by $\mathcal F_{1/2}^+$ the (partial) weighted Fourier transform
 $$
 \mathcal F_{1/2}^+ : H^s_{c} \to \mathfrak h^{s+ \frac 12}  , \, u \mapsto (\frac{1}{\sqrt{n}} \widehat u(n))_{n \ge 1}
 $$
 and by $\mathcal G$ Tao's gauge transform, defined for any given $s \ge 0$ by
 \begin{equation}\label{recall mathcal G}
 \mathcal G : H^s_{r,0} \to H^s_{+,0}  , \, u \mapsto \partial_x \Pi[\overline{g_\infty}] \, , 
 \end{equation}
 where we recall that $g_\infty \equiv  g_\infty (\cdot, u) = e^{i \partial_x^{-1}u}$.
 By \eqref{w and Phi_0}, $\Phi_0$ can be expressed in terms of $\mathcal G$ and $\mathcal F_{1/2}^+$ as
 \begin{equation}\label{formula Phi_0}
 \Phi_0(u) = \frac{1}{i} \mathcal F_{1/2}^+[\mathcal G(u)] \, .
 \end{equation}
  By Theorem \ref{Theorem Phi - Phi_L}, $\frac{1}{i} \mathcal F_{1/2}^+ \circ \mathcal G$ is a high frequency approximation
 of $\Phi$. In more detail,  for any $s \geq 0$,
 $u \mapsto \Phi(u) -  \frac{1}{i} \mathcal F_{1/2}^+[\mathcal G(u)]$ is a continuous map from $H^s_{r,0}$ with values in $\mathfrak h^{s+\frac 12 + \tau(s)}$.
 By \cite{GKT3}, \cite{GKT4},  for any $s \ge 0$, the Birkhoff map $\Phi: H^s_{r,0} \to \frak h^{s+\frac12}$ is a real analytic diffeomorphism and by
 Theorem \ref{diffeogauge} in Appendix \ref{Appendix C},  $\Phi_0:  H^s_{r,0} \to \frak h^{s+\frac12}$  is a real analytic diffeomorphism
 onto an open, proper subset of $\frak h^{s+ \frac12}$}.
 To state our high frequency approximation of the differential of $\Phi$ and of $\Phi^{-1}$ we introduce
$$
\tau_2(s) := \begin{cases}
1 \qquad \quad \  {\text{if}} \ \  s > 3/2 \\
1- \qquad   {\text{if}} \ \ s = 3/2 \\
\frac{s}{2}+\frac 14 \quad  \ \,   {\text{if}} \ \ 1/2 \leq s < 3/2 \\
s \qquad \quad \ {\text{if}} \ \ 0 < s < 1/2
\end{cases} \, .
$$
One verifies in a straightforward way that for any $s > 0$, 
\begin{equation}\label{identities for tau_2}
\min \{ s, \, s  - \tau_2(s) + \tau(s - \tau_2(s)) \}=s \, ,
\end{equation}
where $\tau(s)$ is defined in \eqref{tau}.
 We then obtain the following corollary of Theorem  \ref{Theorem Phi - Phi_L} and Theorem \ref{diffeogauge}.
 \begin{corollary}\label{approximation of derivative of Phi_0}
$(i)$ For any $s \ge 0$, $\Phi - \Phi_0 : H^s_{r,0} \to \frak h^{s+ \frac12 + \tau(s)}$ is real analytic.
As a consequence, for any $u \in H^s_{r,0}$, $s \ge 0$, $d_u \Phi_0$ is a high frequency approximation
of $d_u \Phi$, i.e., for any $u \in H^s_{r,0}$ with $s \ge 0$, 
$$
d_u \Phi - d_u \Phi_0 :  H^s_{r,0} \to \frak h^{s+ \frac12 + \tau(s)}
$$
is a bounded linear operator.

\noindent
$(ii)$ For any $u \in H^s_{r,0}$ with $s > 0$, $(d_u \Phi_0)^{-1}$ is  a high frequency approximation of $(d_u \Phi)^{-1}$ in the sense that
$(d_u \Phi)^{-1} - (d_u \Phi_0)^{-1}$
maps $ \frak h^{s+ \frac12 - \tau_2(s)}$ into $H^s_{r,0}$ and 
$$
(d_u \Phi)^{-1} - (d_u \Phi_0)^{-1} :  \frak h^{s + \frac12 - \tau_2(s)} \to H^s_{r,0}
$$ 
is bounded.
\end{corollary}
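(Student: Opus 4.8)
The plan is to deduce both parts from Theorem~\ref{Theorem Phi - Phi_L} and Theorem~\ref{diffeogauge} by soft arguments, the only genuine work being the bookkeeping of regularity exponents in part $(ii)$.

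For part $(i)$, the starting point is that $\Phi:H^s_{r,0}\to\frak h^{s+1/2}$ is real analytic by \cite{GKT3}, \cite{GKT4}, and $\Phi_0:H^s_{r,0}\to\frak h^{s+1/2}$ is real analytic by \eqref{formula Phi_0} together with Theorem~\ref{diffeogauge}, so that $\Phi-\Phi_0$ is real analytic as a map into $\frak h^{s+1/2}$. By Theorem~\ref{Theorem Phi - Phi_L} it maps $H^s_{r,0}$ continuously — hence locally boundedly — into the Banach space $\frak h^{s+1/2+\tau(s)}$, which is continuously embedded in $\frak h^{s+1/2}$. I would then invoke the standard fact that a map which is real analytic into a Banach space $Y$ and locally bounded into a continuously embedded Banach subspace $Z\hookrightarrow Y$ is real analytic into $Z$ (alternatively, one may revisit the proof of Theorem~\ref{Theorem Phi - Phi_L} and check that the series decompositions used there converge on complex neighbourhoods of $H^s_{r,0}$), yielding the real analyticity of $\Phi-\Phi_0:H^s_{r,0}\to\frak h^{s+1/2+\tau(s)}$. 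The differential at any $u\in H^s_{r,0}$ of a real analytic map into $\frak h^{s+1/2+\tau(s)}$ is a bounded linear operator into that space, and $d_u(\Phi-\Phi_0)=d_u\Phi-d_u\Phi_0$; this proves $(i)$.

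For part $(ii)$, fix $s>0$ and $u\in H^s_{r,0}$, and set $\sigma:=s-\tau_2(s)$; a case-by-case inspection of the definition of $\tau_2$ shows $0\le\sigma<s$ in every regime, so in particular $u\in H^s_{r,0}\subseteq H^\sigma_{r,0}$. The starting point is the algebraic identity
$$
(d_u\Phi)^{-1}-(d_u\Phi_0)^{-1}=-(d_u\Phi)^{-1}\circ\big(d_u\Phi-d_u\Phi_0\big)\circ(d_u\Phi_0)^{-1}\ ,
$$
whose factors I would analyze at carefully chosen regularities. Since $\Phi_0:H^\sigma_{r,0}\to\frak h^{\sigma+1/2}$ is a diffeomorphism (Theorem~\ref{diffeogauge}) and $u\in H^\sigma_{r,0}$, the operator $(d_u\Phi_0)^{-1}:\frak h^{\sigma+1/2}\to H^\sigma_{r,0}$ is bounded; by part $(i)$ applied at regularity $\sigma$ (legitimate since $u\in H^\sigma_{r,0}$), $d_u\Phi-d_u\Phi_0:H^\sigma_{r,0}\to\frak h^{\sigma+1/2+\tau(\sigma)}$ is bounded; and since $\Phi:H^s_{r,0}\to\frak h^{s+1/2}$ is a diffeomorphism and $u\in H^s_{r,0}$, the operator $(d_u\Phi)^{-1}:\frak h^{s+1/2}\to H^s_{r,0}$ is bounded. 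To chain the three maps, the only thing to check is that the intermediate space $\frak h^{\sigma+1/2+\tau(\sigma)}$ is continuously embedded in $\frak h^{s+1/2}$, i.e.\ that $\sigma+\tau(\sigma)\ge s$, which is precisely what identity \eqref{identities for tau_2} provides, namely $\sigma+\tau(\sigma)=s-\tau_2(s)+\tau(s-\tau_2(s))\ge s$. Composing then shows $(d_u\Phi)^{-1}-(d_u\Phi_0)^{-1}$ maps $\frak h^{\sigma+1/2}=\frak h^{s+1/2-\tau_2(s)}$ boundedly into $H^s_{r,0}$, as claimed. The point that requires care — and the reason $\tau_2$ is defined so that \eqref{identities for tau_2} holds — is that for a potential $u$ of regularity exactly $s$ the inverse $(d_u\Phi)^{-1}$ is only available on $\frak h^{r+1/2}$ with $r\le s$, so one must first discard the surplus regularity $\sigma+\tau(\sigma)-s\ge 0$ via the embedding $\frak h^{\sigma+1/2+\tau(\sigma)}\hookrightarrow\frak h^{s+1/2}$ before applying $(d_u\Phi)^{-1}$. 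The only mildly delicate ingredient overall is the upgrade from continuity to real analyticity in part $(i)$; everything else is routine.
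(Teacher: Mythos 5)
Your proposal is correct and follows essentially the same route as the paper: the analyticity upgrade in part $(i)$ is exactly the paper's argument (componentwise real analyticity plus continuity into $\frak h^{s+\frac12+\tau(s)}$, upgraded via the standard criterion, cited there as \cite[Theorem A.5]{KP1}), and part $(ii)$ uses the same resolvent identity $(d_u\Phi)^{-1}-(d_u\Phi_0)^{-1}=-(d_u\Phi)^{-1}\circ(d_u\Phi-d_u\Phi_0)\circ(d_u\Phi_0)^{-1}$ with the same regularity bookkeeping through $\sigma=s-\tau_2(s)$ and \eqref{identities for tau_2}. No gaps; your explicit check that $0\le\sigma<s$ and the embedding $\frak h^{\sigma+\frac12+\tau(\sigma)}\hookrightarrow\frak h^{s+\frac12}$ are precisely the points the paper relies on.
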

\begin{proof} $(i)$ By the above considerations, $\Phi - \Phi_0 : H^s_{r,0} \to \frak h^{s+ \frac12}$ is real analytic for any $s \ge 0$.
In particular, each component of $\Phi - \Phi_0$ is a real analytic map $H^s_{r,0} \to \C$. 
Since by Theorem \ref{Theorem Phi - Phi_L},   $\Phi - \Phi_0 : H^s_{r,0} \to \frak h^{s+ \frac12 + \tau(s)}$ is continuous for any $s \ge 0$, 
one infers from \cite[Theorem A.5]{KP1} that $\Phi - \Phi_0 : H^s_{r,0} \to \frak h^{s+ \frac12 + \tau(s)}$ is real analytic.

\noindent
$(ii)$ For any given $u \in H^s_{r,0}$ with $s > 0$, introduce the linear operators 
$$
A(u):= d_u \Phi - d_u \Phi_0 : L^2_{r,0} \to \frak h^{ \frac12 } \, ,  \quad
$$
$$
B(u):= d_u \Phi^{-1} - d_u \Phi_0^{-1}: \frak h^{\frac12} \to L^2_{r,0} \, . 
$$
Note that
$$
\begin{aligned}
\mbox{Id} & = d_u \Phi  \circ (d_u \Phi)^{-1} = d_u \Phi \circ ((d_u \Phi_0)^{-1} + B(u)) \\
& =  (d_u \Phi_0 + A(u)) \circ (d_u \Phi_0)^{-1} +   d_u \Phi \circ B(u) \\
& = \mbox{Id} + A(u) \circ (d_u \Phi_0)^{-1} +  d_u \Phi \circ B(u) \, .
\end{aligned}
$$
It then follows that
$$
B(u) = -  (d_u \Phi)^{-1}  \circ A(u) \circ (d_u \Phi_0)^{-1} \, .
$$
Furthermore, by item $(i)$ and \eqref{identities for tau_2}, $A(u)$ maps $H^{s- \tau_2(s)}_{r,0}$ into $\frak h^{s+ \frac12 }$
and
\begin{equation}\label{properties A(u)}
A(u) : H^{s- \tau_2(s)}_{r,0} \to \frak h^{s+ \frac12 } \, ,
\end{equation}
is bounded. Since 
$(d_u \Phi_0)^{-1} : \frak h^{s + \frac12 - \tau_2(s)} \to H^{s - \tau_2(s)}_{r,0}$ 
and $(d_u \Phi)^{-1} :  \frak h^{s + \frac12} \to H^{s}_{r,0}$ are bounded linear operators,
we then conclude that $B(u)$ maps  $\frak h^{s + \frac12 - \tau_2(s)}$ into $H^{s }_{r,0}$ and that
$B(u) :  \frak h^{s + \frac12 - \tau_2(s)} \to H^{s }_{r,0}$ is bounded.
\end{proof}



\begin{thebibliography}{99}

\bibitem{ABFS} {\sc L. Abdelouhab, J. Bona, M. Felland,
J.-C. Saut}, {\em Non local models for nonlinear dispersive waves}, Physica D, Nonlinear Phenomena, 40(1989), 360--392


\bibitem{Benj} {\sc T. Benjamin}, 
{\em Internal waves of permanent form in fluids of great depth,}
J. Fluid Mech., 29(1967), 559--592

\bibitem{DA} {\sc R. Davis, A. Acrivos}, {\em Solitary internal waves in deep water}, 
J. Fluid Mech. 29(1967), 593--607

\bibitem{ET1} {\sc M. Erdo\u{g}an, N. Tzirakis},  {\em Global smoothing for the periodic KdV evolution}, 
Int. Math. Res. Not. IMRN, 2013, no. 20, 4589--4614

\bibitem{ET2} {\sc M. Erdo\u{g}an, N. Tzirakis}, {\em Talbot effect for the cubic non-linear Schr\"odin\-ger  equation on the torus},  
Math. Res. Lett. 20(2013), 1081-1090

\bibitem{ET3} {\sc M. Erdo\u{g}an, N. Tzirakis}. {\em Dispersive partial differential equations. Wellposedness and applications}, 
London Mathematical Society Student Texts, 86. Cambridge University Press, Cambridge, 2016

\bibitem {Evans} {\sc L. Evans}, {\em Partial Differential Equations}, 
Graduate Studies in Mathematics, vol. 19, American Math. Soc., 1998

\bibitem{GK} {\sc P. G\'erard, T. Kappeler},
{\em On the integrability of the Benjamin--Ono equation on the torus}, Comm. Pure Appl. Math. 74(2021), no. 8, 1685-1747 

\bibitem{GKT1} {\sc P. G\'erard, T. Kappeler, P. Topalov}, {\em Sharp well-posedness results of the Benjamin-Ono equation in
$H^{s}(\T,\R)$ and qualitative properties of its solutions}, to appear in Acta Math.,  arXiv:2004.04857

\bibitem{GKT2} {\sc P. G\'erard, T. Kappeler, P. Topalov}, {\em On the spectrum of the Lax operator of the 
Benjamin-Ono equation on the torus}, J. Funct. Anal. 279(2020), no. 12, 108762

\bibitem{GKT3} {\sc P. G\'erard, T. Kappeler, P. Topalov}, {\em On the analytic Birkhoff normal form of the Benjamin-Ono equation and applications}, 
arXiv:2103.07981

\bibitem{GKT4} {\sc P. G\'erard, T. Kappeler, P. Topalov}, {\em On the analyticity of the Birkhoff map of the Benjamin-Ono 
equation in the large}, preprint

\bibitem{GK1} {\sc B. Gr\'ebert, T. Kappeler}, {\em The defocusing NLS and its normal form},
EMS Series of Lectures in Mathematics, EMS, Z\"urich, 2014

\bibitem{IMOS} {\sc B. Isom, D. Mantzavinos, S. Oh, A. Stefanov},
{\em Polynomial bound and nonlinear smoothing for the Benjamin-Ono equation on the circle},
arXiv:2001.06896  


\bibitem{KP1} {\sc T. Kappeler, J. P\"oschel}, {\em KdV\&KAM}, vol. 45, Ergeb. der Math. und ihrer Grenzgeb.,
Springer, 2003

\bibitem{KST1} {\sc T. Kappeler, B. Schaad, P. Topalov}, {\em Qualitative features of periodic solutions of KdV}, 
Comm. Partial Differential Equations  38(2013), no. 9, 1626--1673

\bibitem{KST2}  {\sc T. Kappeler, B. Schaad, P. Topalov},  {\em Asymptotics of spectral quantities of Zakharov-Shabat operators},
 J. Differential Equations 265 (2018), no. 11, 5604--5653
 
\bibitem{KST3}  {\sc T. Kappeler, B. Schaad, P. Topalov}, {\em Semi-linearity of the non-linear Fourier transform of the defocusing NLS equation},
 Int. Math. Res. Not. IMRN 2016, no. 23, 7212--7229

\bibitem{KST4}  {\sc T. Kappeler, B. Schaad, P. Topalov},  {\em Scattering-like phenomena of the periodic defocusing NLS 
equation}, Math. Res. Lett. 24(2017), no. 3, 803--826
  
\bibitem {KuPe} {\sc S. Kuksin, G. Perelman}, {\em Vey theorem in infinite dimensions
and its application to KdV}, Discrete Contin. Dyn. Syst. A  27(2010), no. 1, 1-24

\bibitem{Mol} {\sc L. Molinet}, 
{\em  Global well-posedness in $L^2$ for the periodic Benjamin-Ono equation},
Amer. J. Math. 130(2008), no. 3, 2793--2798 

\bibitem{MP} {\sc L. Molinet, D. Pilod}, 
{\em The Cauchy problem for the Benjamin-Ono equation in $L^2$ revisited},
 Anal. and PDE 5(2012), no. 2, 365--395
 
 \bibitem{La} {\sc P.Lax}, {\em Functional analysis}, John Wiley, 2002.
 
 \bibitem{P}{\sc V. Peller}, {\em  Hankel operators and their applications}, Springer Monographs in Mathematics,
Springer-Verlag, 2003
 
 
 \bibitem{Sa0} {\sc J.-C. Saut}, {\em Sur quelques 
g\'en\'eralisations de l'\'equation de Korteweg-de Vries},
J. Math. Pures Appl. 58(1979), 21--61

\bibitem{Sa} {\sc J.-C. Saut}, 
{\em Benjamin-Ono and Intermediate Long Wave equations: modeling, IST, and PDE},  
in {\em Nonlinear partial differential equations and inverse scattering}, 95--160.
Fields Institute Communications 83, Miller, Perry, Saut, Sulem eds, Springer, New York, 2019

\bibitem{Tao} {\sc T. Tao}, {\em Global well-posedness of the Benjamin-Ono equation}, 
J. Hyperbolic Diff. Equ. 1(2004), 27--49

\bibitem{T} {\sc N. Tzvetkov},  {\em Unpublished note and private communication.}


\end{thebibliography}
\end{document}